\newtheorem{theorem}{Theorem}[section]
\newtheorem{lemma}[theorem]{Lemma}
\newtheorem{conjecture}[theorem]{Conjecture}
\newtheorem{problem}[theorem]{Problem}
\theoremstyle{definition}
\theoremstyle{remark}
\numberwithin{equation}{section}
\newcommand{\cB}{{\mathcal B}}
\newcommand{\cC}{{\mathcal C}}
\newcommand{\cD}{{\mathcal D}}
\newcommand{\cG}{{\mathcal G}}
\newcommand{\subproof}{\begin{proof}[Subproof]}
\newcommand{\del}{\backslash}
\newcommand{\con}{/}
\newcommand{\bZ}{\mathbb Z}
\newcommand{\bF}{\mathbb F}
\newcommand{\bFp}{{\bF}_{\rm prime}}
\newcommand{\bFmult}{{\bF}^\times}
\newcommand{\cR}{\mathcal{R}}
\newcommand{\cM}{\mathcal{M}}
\DeclareMathOperator{\pert}{pert}
\DeclareMathOperator{\dist}{dist}
\DeclareMathOperator{\rs}{RowSpace}
\DeclareMathOperator{\rank}{rank}
\DeclareMathOperator{\PG}{PG}
\begin{document}

\sloppy

\title[Highly Connected Matroids]{The Highly Connected Matroids in Minor-closed Classes}

\author[Geelen]{Jim Geelen}
\address{Department of Combinatorics and Optimization,
University of Waterloo, Waterloo, Canada} 
\thanks{ This research was partially supported by grants from the
Office of Naval Research [N00014-10-1-0851],
the Marsden Fund of New Zealand, and NWO (The Netherlands Organisation for Scientific
Research).}
\email{jim.geelen@uwaterloo.ca}

\author[Gerards]{Bert Gerards}
\address{Centrum Wiskunde \& Informatica, Amsterdam, The Netherlands}

\author[Whittle]{Geoff Whittle}

\address{School of Mathematical and Computing Sciences,
Victoria University, Wellington, New Zealand}
\email{geoff.whittle@vuw.ac.nz}

\subjclass[2010]{05B35}
\keywords{matroids, minors, connectivity, girth,
linear codes, ML threshold function, growth rate}
\date{\today}

\begin{abstract}
For any minor-closed class of matroids over a fixed finite field,
we state an exact structural characterization for the sufficiently
connected matroids in the class.  We also state a number of conjectures
that might be approachable using the structural characterization.
\end{abstract}

\maketitle


\noindent {\em This paper is dedicated to James Oxley on the  occasion of his 60th Birthday.}

\section{Introduction}

We have proved a structure theorem for members of any given proper minor-closed class of 
matroids representable over a given finite field. The full statement of the structure theorem
involves a number of technicalities and is somewhat lengthy to state.  
The proof, which will appear in the series of papers on matroid structure
that we are currently writing, is also long and technical and it will be some time before
all of these papers are written. 
We anticipate that the structure theorem will have many applications, but owing to the 
nature of the theorem, it will take considerable effort for others to become
proficient in its use.

In the second half of this paper we state a number of conjectures
that one might be able to approach using the structure theory.
There are quite a few problems in matroid theory that reduce to instances of
arbitrarily high connectivity; for example, problems in coding theory and
problems involving quadratic or exponential growth-rates.
Fortunately, most of the technical issues in the statement 
of the structure theorem evaporate when we consider
matroids that are sufficiently highly connected.
In the first part of the paper we state a simplified version of the
structure theorem for highly connected matroids. 
We then state refinements of the structure theorems that
give exact characterizations of the sufficiently highly connected
matroids in minor-closed classes.
We hope that others who are interested in using our more 
general structure theorem will be able to familiarise themselves with
some of the essential ingredients by first applying this simplified version.

\section{Preliminaries}

We follow the notation of Oxley \cite{oxley} except that we
use $|M|$ to denote the size of the ground set of $M$ and that we will
denote the column matroid of a matrix $A$ by $\widetilde{M}(A)$.
What follows is a discussion of some key notions that are particularly
relevant for a reading of this paper.

\subsection*{Connectivity}
Tutte's definition of $k$-connectivity is a bit restrictive since, for example,
projective geometries fail to be $4$-connected.
For some of our intended applications in coding theory and on growth rates,
{\em vertical} $k$-connectivity is more natural.
We recall that a matroid $M$ is {\em vertically $k$-connected}
if, for each partition $(X,Y)$ with $r(X)+r(Y)-r(M)<k-1$, either 
$X$ or $Y$ is spanning.  Vertical connectivity is geometrically natural and
aligns well with vertex connectivity in graphs.

\subsection*{Represented matroids}

While we typically use the language of matroid theory
to discuss our results, our structure theorems are about matrices,
so it is convenient to have a more formal notion of a
``representation".

For a field $\bF$, an {\em $\bF$-represented matroid} is a pair
$M=(E,U)$ where $U$ is a subspace of $\bF^E$.
For a matrix $A$ over $\bF$ with columns indexed by $E$, we let $M(A)$ denote
$(E,\rs(A))$; we call $A$ a
{\em generator matrix} for $M$ and say that 
$A$ {\em generates} $M$. 
For a represented matroid $M$ we denote by $\widetilde M$ 
the column matroid of a generator matrix of $M$.
We freely carry standard notions such as
circuits, bases, etcetera, over from $\widetilde{M}$ to $M$.

Two $\bF$-represented matroids $(E,U_1)$ and $(E,U_2)$ are
{\em projectively equivalent} if they are the same up to
``conversion of units'', that is if $U_2=\{xD\,:\,x\in U_1\}$
for some nonsingular diagonal matrix $D$.
This means that their generator matrices are
row equivalent up to column scaling.

The matroid operations:  deletion, contraction, and
duality  all have well understood analogues for represented matroids.
For a set $X$ of elements of an
$\bF$-represented matroid $M=(E,U)$, we define 
\begin{eqnarray*}
U|X &=& \{ u|X\, : \, u\in U\}, \\
U\del X &=& U|(E-X), \mbox{ and}\\
 U\con X &=& \{ u\in U\, : \, u|X=0\}\del X.
\end{eqnarray*}
If $X$ and $Y$ are disjoint sets in $E$,
then the matroid obtained from $M$ by {\em deleting} $X$ and {\em contracting} $Y$
is  $M\del X\con Y=(E-(X\cup Y),U\del X\con Y)$.
We call any represented matroid that is projectively equivalent to $M\del X\con Y$
for some $X,Y\subseteq E$ a {\em minor} of $M$.

The {\em dual} of $M=(E,U)$ is defined as $M^*= (E, U^\perp)$,
where $U^{\perp}$ denotes the subspace of $\bF^E$ consisting of all
vectors that are orthogonal to each vector in $U$.

\subsection*{Perturbations}  

We introduce three interrelated operations on a representation:
projection, lifting, and perturbation.
Informally, projection is the operation of extending 
by a set of new elements and then contracting them;
lifting is the  dual operation of coextension and deletion;
and rank-$t$ perturbation  is the operation of
adding a matrix of rank $t$.
Throughout this section $\bF$ denotes a field and
$E$ denotes a finite set.

Let $M_1=(E,U_1)$ and $M_2 = (E,U_2)$ be $\bF$-represented matroids.
If there is a represented matroid $M$ on ground set
$E\cup \{e\}$ such that
$M_1 = M\del e$ and $M_2=M\con e$, then we say that $M_2$ is an {\em elementary
projection} of $M_1$ and that $M_1$  an {\em  elementary lift} of $M_2$. 
We let $\dist(M_1,M_2)$ denote the minimum number of elementary lifts and
elementary projections required in order to transform $M_1$ into $M_2$.

We say that $M_2$ is a {\em rank-$(\le t)$ perturbation}
of $M_1$ if there exist generator matrices $A_1$ for $M_1$
and $A_2$ for $M_2$, with
the same set of row indices, such that $\rank(A_1-A_2) \le t$.
If $t$ is the smallest integer such that $M_2$ is a rank-$(\le t)$
perturbation of $M_1$, then we write $\pert(M_1,M_2)=t$.
The next
result follows from \cite{ggw1}.

\begin{lemma}
\label{dist-pert}
If $M_1$ and $M_2$ are $\bF$-represented matroids on the same ground set, then
\[ \pert(M_1,M_2)\leq \dist(M_1,M_2)\leq 2\pert(M_1,M_2).\]
\end{lemma}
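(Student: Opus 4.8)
The plan is to prove the two inequalities separately, using the characterization of elementary lifts and projections in terms of generator matrices.

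For the lower bound $\pert(M_1,M_2)\le\dist(M_1,M_2)$, I would first check the base case: a single elementary projection or lift changes the perturbation distance by at most one. Suppose $M_2$ is an elementary projection of $M_1$, so there is a represented matroid $M=(E\cup\{e\},U)$ with $M\del e=M_1$ and $M\con e=M_2$. Choose a generator matrix $A$ for $M$; after row operations we may assume the column indexed by $e$ is a standard unit vector $\mathbf{e}_r$ (or is zero, a degenerate case handled separately). Deleting column $e$ gives a generator matrix $A_1$ for $M_1$. For $M\con e$ we take the rows other than row $r$ of $A$ restricted to $E$, which sits inside the row space of $A_1$; padding with a zero row $r$ gives a generator matrix $A_2$ for $M_2$ with the same row index set, and $A_1-A_2$ is supported on row $r$, hence has rank at most one. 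Thus $\pert(M_1,M_2)\le 1=\dist(M_1,M_2)$ in this case, and the dual argument handles an elementary lift. Then, since perturbation distance satisfies the triangle inequality (if $A_1-A_2$ has rank $\le s$ and $A_2-A_3$ has rank $\le t$ then $A_1-A_3$ has rank $\le s+t$, after arranging common row index sets), induction on $\dist(M_1,M_2)$ gives the bound in general. The one subtlety is keeping the row index sets consistent when composing several elementary moves; I would handle this by always passing to a generator matrix whose rows are indexed by a fixed set of size $\max(r(M_1),r(M_2))+\dist(M_1,M_2)$, padding with zero rows as needed.

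For the upper bound $\dist(M_1,M_2)\le 2\pert(M_1,M_2)$, the key observation is that a rank-$1$ perturbation can be realized by one elementary lift followed by one elementary projection. Concretely, if $A_1$ and $A_2$ generate $M_1,M_2$ with the same row indices and $\rank(A_1-A_2)\le 1$, write $A_1-A_2=\mathbf{c}\,\mathbf{d}^{\mathsf T}$ for a column vector $\mathbf{c}$ and a row vector $\mathbf{d}$. Form the matrix $A$ on row set enlarged by one new row: put $A_1$ (equivalently $A_2$) in the old rows adjusted appropriately, put $\mathbf{d}$ in the new row, and add a new column that is the unit vector on the new row plus $\mathbf{c}$ in the old rows. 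One checks that deleting the new column recovers (a representation projectively equivalent to) $M_1$ and contracting it recovers $M_2$; this exhibits $M_2$ as obtained from $M_1$ by one elementary lift and one elementary projection, so $\dist(M_1,M_2)\le 2$. For general $t=\pert(M_1,M_2)$, factor the rank-$t$ difference as a sum of $t$ rank-$1$ matrices, interpolate with $t-1$ intermediate represented matroids, apply the rank-$1$ case to each consecutive pair, and add up using the triangle inequality for $\dist$, giving $\dist(M_1,M_2)\le 2t$.

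I expect the main obstacle to be the bookkeeping in the rank-$1$ step of the upper bound: verifying that the single-column extension really does delete to $M_1$ and contract to $M_2$ requires care with the degenerate cases (when $\mathbf{c}$ or $\mathbf{d}$ is zero, or when the new column is dependent on old ones), and one must track projective equivalence rather than literal equality throughout, since that is all that the definition of minor — and hence of $\dist$ — provides. Everything else is linear algebra of the most routine kind. Since the lemma is attributed to \cite{ggw1}, I would in practice cite that paper for the detailed verification and only sketch the construction here; but the argument above is self-contained modulo those case checks.
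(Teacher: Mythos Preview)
The paper does not prove this lemma at all; it simply states that the result ``follows from \cite{ggw1}'' and moves on. So there is no in-paper argument to compare against, and your proposal already goes further than the paper itself does.

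Your sketch is correct in outline. For the lower bound, the cleanest way to organise it is to build the chain of generator matrices $B_0,\ldots,B_d$ directly (all on a common row set large enough to accommodate every intermediate rank), rather than appealing to a triangle inequality for $\pert$ as a black box: the parenthetical justification you give for that inequality tacitly uses the \emph{same} matrix $A_2$ on both sides, which is not a priori guaranteed. The direct construction you hint at in your final sentence about row-index bookkeeping avoids this issue entirely. For the upper bound, your description of the one-new-row, one-new-column matrix is slightly garbled (``put $A_1$ (equivalently $A_2$) in the old rows'' cannot be literally right since $A_1\ne A_2$), but the underlying idea is sound: if $\rank(A_1-A_2)\le 1$ then $U_1+U_2$ has dimension at most $\min(\dim U_1,\dim U_2)+1$, so the represented matroid $N=(E,\,U_1+U_2)$ is an elementary lift of $M_1$ and has $M_2$ as an elementary projection, giving $\dist(M_1,M_2)\le 2$. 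The rank-$t$ case then follows by interpolation exactly as you describe.
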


\subsection*{Frame matrices and confinement}

Let $A$ be a matrix over a field $\mathbb F$. Then $A$ is a {\em frame matrix} if 
each column of $A$ has at most two nonzero entries.
A {\em represented frame matroid} is a represented matroid that is
generated by a frame matrix.

We let $\bFmult$ denote the multiplicative group of $\bF$.
Let $\Gamma$ be a subgroup of $\bFmult$.
A {\em $\Gamma$-frame matrix} is a frame matrix $A$ such that:
\begin{itemize}
\item each column of $A$ with one nonzero entry contains a $1$, and
\item each column of $A$ with two nonzero entries contains a $1$
and a distinct entry $-\gamma$ where $\gamma\in \Gamma$.
\end{itemize}
If $A$ is a $\Gamma$-frame matrix, then we call
$M(A)$ an {\em $\bF$-represented $\Gamma$-frame matroid};
the set of all $\bF$-represented $\Gamma$-frame matroids
is denoted by $\cD(\bF,\, \Gamma)$.
We let $\cD(\bF,\, \Gamma)^*$ denote the set of duals of
elements of $\cD(\bF,\, \Gamma)$. 

Let $\bF'$ be a subfield of a field $\bF$ and $M$
be an $\bF$-represented matroid.
Then $M$ is {\em confined to $\bF'$} if there exists
a matrix $A$ over $\bF'$ such that $M$
is projectively equivalent to $M(A)$.

Every finite field $\bF$ has a unique subfield of prime
order; we denote that subfield by $\bFp$.

\section{Structure in minor-closed classes}

We will call a class of matroids or represented matroids {\em minor closed} 
if it is closed under both minors and isomorphism.

We can now state the structure theorem for highly connected matroids
in a proper minor-closed class of matroids
representable over a finite field.

\begin{theorem}
\label{structure1}
Let $\mathbb F$ be a finite field and 
let $\mathcal M$ be a proper minor-closed class of $\mathbb F$-represented matroids.
Then there exist $k,t\in\mathbb Z_+$ such that
each vertically $k$-connected member of $\mathcal M$ is 
a rank-$(\leq t)$ perturbation of an $\mathbb F$-represented matroid
$N$,
such that either
\begin{itemize}
\item[(i)] $N$ is a represented frame matroid, 
\item[(ii)] $N^*$ is a represented frame matroid, or
\item[(iii)] $N$ is confined to a subfield of $\mathbb F$.
\end{itemize}
\end{theorem}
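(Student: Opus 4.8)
The plan is to deduce Theorem~\ref{structure1} from the full structure theorem for proper minor-closed classes (the main result of the ongoing series), by showing that for vertically $k$-connected matroids all the ``local'' pieces of that theorem collapse. Recall that the general theorem says that every member of $\cM$ admits a tree decomposition whose parts are each obtained, by a bounded-rank perturbation, from a matroid that is either a frame matroid, a dual frame matroid, or confined to a bounded-size extension of a subfield, together with a bounded number of additional ``flexible'' elements. First I would invoke that theorem to get constants governing (a) the perturbation rank, (b) the number of flexible elements per part, (c) the width of the tree decomposition, and (d) the size of the field extensions involved; then I would choose $k$ large relative to all of these.

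The core of the argument is then a connectivity-reduction step. First I would argue that a vertically $k$-connected matroid $M$, for $k$ exceeding the width of the tree decomposition plus the total perturbation and flexibility bounds, cannot genuinely split across any edge of the tree: any displayed separation of small order in the decomposition, together with vertical $k$-connectivity, forces one side to be spanning and hence forces that part to be essentially all of $M$. Thus $M$ is a bounded-rank perturbation of a single part. Second, within that single part I would absorb the bounded number of flexible elements into the perturbation, increasing $t$ by that bound; this uses Lemma~\ref{dist-pert} to pass freely between the ``distance'' bookkeeping that appears in the structure theory and the ``perturbation'' bookkeeping in the statement. Third, I would handle the field-extension issue: the general theorem allows confinement to a subfield of a \emph{bounded extension} of $\bF$, but since we assume $M$ is $\bF$-represented to begin with, I would show that confinement of a bounded-rank perturbation of $M$ to such an extension can be replaced, at the cost of another bounded increase in $t$, by confinement to an actual subfield of $\bF$ --- intersecting the relevant subspace with $\bF^E$ and controlling the rank lost in doing so.

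Putting these together, each vertically $k$-connected $M\in\cM$ becomes a rank-$(\le t)$ perturbation of a matroid $N$ that is a represented frame matroid, a dual of one, or confined to a subfield of $\bF$, which is exactly the three alternatives (i)--(iii); the final $t$ is the sum of the perturbation rank, the flexibility bound, and the extension-correction bound from the general theorem, and $k$ is chosen after all of these are fixed. I would also remark that the frame and dual-frame alternatives are about $\bFmult$-frame matroids or small ``group-labelled'' variants in the general theorem, but after a bounded perturbation absorbing the group-size discrepancy these coincide with ordinary represented frame matroids, so no generality is lost in stating (i)--(ii) without reference to a subgroup $\Gamma$.

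The main obstacle I anticipate is the first step --- showing that high vertical connectivity genuinely forces the tree decomposition to be trivial. The subtlety is that vertical connectivity is a ``one-sided'' notion (it only constrains separations in which neither side is spanning), whereas the tree decomposition in the structure theorem is built from ordinary (two-sided) connectivity and may display separations one of whose sides \emph{is} spanning; one must check that such separations are harmless, i.e.\ that a spanning side of a low-order separation contributes only boundedly much to $M$ outside the part on the other side, and that after contracting a spanning set the residual structure still fits the frame / dual-frame / subfield template with only a bounded further perturbation. Getting the bookkeeping on these nested reductions right, so that the constants genuinely depend only on $\bF$ and $\cM$ and not on $M$, is where the real care is needed.
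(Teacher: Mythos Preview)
The paper contains no proof of Theorem~\ref{structure1}; this is an announcement, and all proofs are deferred to the forthcoming series. What the paper does supply, immediately after the statement, is a derivation scheme via Theorems~\ref{structure3}, \ref{structure4}, and~\ref{structure5}: high vertical connectivity forces an $M(K_n)$- or $M(K_n)^*$-minor; a large clique minor together with the absence of a large $\PG$ over $\bFp$ yields the frame (or, by duality, dual-frame) outcome; and a large $\PG$ over $\bFp$ together with the absence of a large $\PG$ over $\bF$ yields the subfield outcome. Since $\cM$ is proper it omits $\PG(m_1-1,\bF)$ for some $m_1$, and this closes the case analysis. So the paper's intended route to Theorem~\ref{structure1} is a minor-based trichotomy, not a collapse of the general tree-decomposition theorem.

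Your plan is therefore a genuinely different route, and the obstacle you yourself flag is the real problem with it. Vertical $k$-connectivity only constrains separations where \emph{neither} side spans; it happily permits arbitrarily many low-order separations with one spanning side, and your assertion that in such a separation the spanning side ``contributes only boundedly much to $M$ outside the part on the other side'' is false in general (a projective geometry already shows this: every hyperplane complement gives such a separation). So there is no reason a bounded-adhesion tree decomposition must become trivial under vertical connectivity alone, and your Step~1 does not go through as written. The paper's route sidesteps this entirely by never invoking the tree decomposition, instead using the presence of large clique or projective-geometry minors directly. A second, smaller issue: your ``field-extension'' step rests on a misreading --- nothing in the paper's description of the general theorem involves extensions of $\bF$; confinement is always to a subfield of $\bF$, so that reduction is not needed.
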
 

The outcomes in Theorem~\ref{structure1} are not mutually exclusive,
but the following sequence of results describes how each outcome arises.
The first of these results generalises
a theorem of Mader \cite{mader} that any sufficiently connected graph
has a $K_n$-minor.

\begin{theorem}
\label{structure3}
Let $\mathbb F$ be a finite field and let $n$ be a positive integer.
Then there exists $k\in \bZ_+$ such that
each vertically $k$-connected $\mathbb F$-representable matroid
has an $M(K_n)$- or $M(K_n)^*$-minor.
\end{theorem}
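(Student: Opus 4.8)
The plan is to derive this from Theorem~\ref{structure1} together with a Ramsey-type argument and a direct analysis of the three structured outcomes. First I would apply Theorem~\ref{structure1} to a well-chosen minor-closed class. Since the statement is about all $\mathbb F$-representable matroids rather than a proper minor-closed subclass, I cannot apply Theorem~\ref{structure1} directly to the class of all such matroids; instead I would argue by contradiction. Suppose no suitable $k$ exists: then for every $k$ there is a vertically $k$-connected $\mathbb F$-representable matroid with no $M(K_n)$- and no $M(K_n)^*$-minor. Let $\mathcal M$ be the class of $\mathbb F$-represented matroids with no $M(K_n)$- and no $M(K_n)^*$-minor. This is a minor-closed class, and it is proper (for instance because $M(K_n)\notin\mathcal M$), so Theorem~\ref{structure1} applies and yields constants $k_0$ and $t$: every vertically $k_0$-connected member of $\mathcal M$ is a rank-$(\le t)$ perturbation of a matroid $N$ of one of the three types. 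Our hypothesis supplies, in particular, a vertically $k_0$-connected member $M$ of $\mathcal M$, and we may take $M$ to be vertically $k$-connected for $k$ as large as we like.

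The core of the argument is then to show that a vertically highly connected rank-$(\le t)$ perturbation of a frame matroid, a dual frame matroid, or a matroid confined to a subfield must contain a large clique minor or its dual, contradicting $M\in\mathcal M$ once $k$ is large relative to $n$ and $t$. For the frame case: a perturbation of bounded rank of a frame matroid, when highly connected, still behaves like a frame matroid on a large part of its ground set; geometrically, high vertical connectivity forces the underlying ``frame'' (biased graph) to have a large complete subgraph structure, and a large complete graph $K_m$ as a subgraph of the frame graph yields $M(K_n)$ as a minor once $m$ is large enough to absorb the bounded-rank perturbation and the bounded number of ``unbalanced'' elements. The confined case is handled by noting that a matroid confined to a proper subfield $\mathbb F'$ of $\mathbb F$ has bounded growth rate (it lies in a fixed proper minor-closed subclass of $\mathbb F$-representable matroids, namely the $\mathbb F'$-representable ones), and a bounded-rank perturbation of such a matroid still has linearly bounded size in its rank; since $M(K_n)$ also has this property, this case cannot by itself be obstructed, so in fact one argues that high connectivity plus the subfield structure still forces the clique minor — here one would invoke the known result that sufficiently connected matroids of bounded branch-width or the Growth Rate Theorem machinery produce clique minors, or reduce to the frame/projective-geometry case. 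The dual frame case follows by dualizing, using that the dual of a vertically $k$-connected matroid is cyclically $k$-connected and that $M(K_n)^*$ is the dual of $M(K_n)$.

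The main obstacle I anticipate is the clean extraction of an $M(K_n)$-minor from a highly connected bounded-rank perturbation of a frame matroid: one must control the interaction between the perturbation (which can destroy the frame structure on up to $t$ coordinates) and the biased-graph combinatorics, and argue that vertical $k$-connectivity of the perturbed matroid forces enough connectivity in the ``good'' part to run a graph Ramsey / Mader-type argument. This is where the generalization of Mader's theorem really lies, and it will require a careful reduction showing that deleting the bounded perturbation and the bounded set of exceptional elements leaves a still-highly-connected frame matroid whose frame graph must contain a large clique minor by Mader's theorem, which then lifts back to an $M(K_n)$-minor of the original matroid.
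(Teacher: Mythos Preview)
The paper does not contain a proof of Theorem~\ref{structure3}; this is an announcement paper, and all of Theorems~\ref{structure1}--\ref{structure5} are stated without proof, with the authors indicating that the proofs will appear in a forthcoming series of papers. So there is no ``paper's own proof'' to compare against.

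That said, your proposed strategy has a structural problem. In the paper's presentation, Theorems~\ref{structure3}, \ref{structure4}, and \ref{structure5} are introduced as the sequence of results that explains \emph{how} the three outcomes of Theorem~\ref{structure1} arise; they are components or refinements of the main structure theorem, not corollaries of it. Deriving Theorem~\ref{structure3} from Theorem~\ref{structure1} is therefore almost certainly circular in the intended architecture: in the forthcoming proofs, Theorem~\ref{structure3} (or the work underlying it) is expected to be an input to Theorem~\ref{structure1}, not an output.

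Beyond the circularity, the case analysis you sketch does not close. In case~(iii) you assert that a matroid confined to a proper subfield has ``bounded growth rate'' and then try to parlay this into a clique minor via branch-width or growth-rate machinery; but confinement to a proper subfield imposes no bound on rank or branch-width (projective geometries over $\bFp$ are confined to $\bFp$), and the vague appeal to ``reduce to the frame/projective-geometry case'' is exactly the statement you are trying to prove, now over the smaller field. Even in case~(i), high vertical connectivity of the perturbation $M$ does not transfer to the frame matroid $N$, so you cannot invoke Mader's theorem on the frame graph without substantial additional work---work that you correctly identify as the main obstacle but do not resolve. In short, the reduction to Theorem~\ref{structure1} does not buy you the clique minor; the real content of Theorem~\ref{structure3} lies precisely in the step you flag as the obstacle.
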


Up to duality we may restrict our attention to matroids that 
contain an $M(K_n)$-minor, where $n$ is arbitrarily large.

\begin{theorem}
\label{structure4}
Let $\mathbb F$ be a finite field and
let $m_0$ be a positive integer. Then there exist
$k, n, t\in \bZ_+$  such that, if $M$ is an
$\mathbb F$-represented matroid such 
that $M$ or $M^*$ is vertically $k$-connected and such
that $\widetilde M$ has an
$M(K_n)$-minor but no $\PG(m_0-1,\bFp)$-minor, then
$M$ is a rank-$(\le t)$ perturbation of an $\bF$-represented frame matroid.
\end{theorem}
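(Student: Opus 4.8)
The plan is to prove Theorem~\ref{structure4} by combining the unstated master structure theorem (which we are entitled to quote as the promised output of the matroid-minors project) with an argument that \emph{pins down which of the three outcomes (i)--(iii) of Theorem~\ref{structure1} can occur} once a large clique minor is present but no large projective geometry is. Fix $\bF$ and $m_0$. Apply Theorem~\ref{structure1} to the minor-closed class $\cM$ of all $\bF$-represented matroids with no $\PG(m_0-1,\bFp)$-minor; this class is proper, so we obtain $k_0,t_0$ such that every vertically $k_0$-connected member is a rank-$(\le t_0)$ perturbation of some $N$ landing in one of the cases (i), (ii), (iii). We then choose $n$ large (in terms of $\bF$, $m_0$, $t_0$) and $k\ge k_0$ large, and show that for a vertically $k$-connected $M$ with an $M(K_n)$-minor and no $\PG(m_0-1,\bFp)$-minor, outcomes (ii) and (iii) are impossible, so $N$ must be a represented frame matroid and we are done with $t:=t_0$ (after handling the dual case symmetrically via the hypothesis that $M$ or $M^*$ is vertically $k$-connected).

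The two exclusions go as follows. To rule out outcome (iii): a matroid confined to a proper subfield $\bF'\subsetneq\bF$ has bounded \emph{density} in the sense that its simplification has $O(|\bF'|^{r})$ elements, whereas an $M(K_n)$-minor forces, after a bounded-rank perturbation is undone, essentially graphic density $\binom{n}{2}$ spread over $n-1$ ranks; more to the point, a rank-$(\le t_0)$ perturbation changes the matroid on at most $2t_0$ ``rows'' (by Lemma~\ref{dist-pert} and the projection/lifting description), so a sufficiently large clique minor survives into $N$, and one shows that $N$ confined to $\bF'$ together with a large clique minor is incompatible once $n$ exceeds a threshold depending only on $|\bF|$ — because a long-enough clique minor inside a field-confined matroid would generate, via known extremal results on the growth rate of the class of $\bF'$-representable matroids, a $\PG(m_0-1,\bFp)$-minor, contradicting the hypothesis. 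To rule out outcome (ii): if $N^*$ is a represented frame matroid, then $N$ is in $\cD(\bF,\Gamma)^*$; cographic-type matroids and their perturbations cannot contain arbitrarily large clique minors, because $M(K_n)$ has rank $n-1$ and $\binom n2$ elements, so its dual $M(K_n)^*$ has corank $n-1$; a bounded-rank perturbation of a dual-frame matroid has bounded ``branch-width contribution from the frame part'' in the relevant sense, and a Ramsey/connectivity argument extracts from the $M(K_n)$-minor of $\widetilde M$ a minor of $\widetilde N$ that is still a large clique, which a (perturbed) dual-frame matroid of the required connectivity cannot have — here one uses that $U_{2,n}$-free and large-clique-containing behaviour separates frame from coframe matroids, together with the vertical $k$-connectivity to prevent the perturbation from ``hiding'' the obstruction in a small separator.

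The main obstacle, and where the real work lies, is the second step: making precise and correct the claim that \emph{a bounded-rank perturbation of a dual-frame matroid, if it is vertically $k$-connected with $k$ large, cannot have an $M(K_n)$-minor with $n$ large}. Naively one wants to say ``undo the perturbation, find the clique minor in the dual-frame matroid, derive a contradiction,'' but minors do not commute cleanly with perturbations, and contracting/deleting to reach the clique minor can destroy the connectivity that we relied on. The fix is to route everything through the perturbation bounds of \cite{ggw1} and Lemma~\ref{dist-pert}: a rank-$(\le t_0)$ perturbation is at most $2t_0$ elementary lifts/projections, each of which changes the matroid's behaviour on minors in a controlled, ``bounded-size'' way, so that an $M(K_n)$-minor of the perturbed matroid yields an $M(K_{n'})$-minor of the dual-frame matroid for $n'\ge n - f(t_0)$. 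One must also check that the high vertical connectivity of $M$ (or $M^*$) propagates to a useful connectivity of the frame matroid $N$ up to the bounded perturbation, which is again a consequence of the perturbation being of bounded rank. Once those two ``bounded-rank perturbations are almost minors'' lemmas are in hand, the exclusions of (ii) and (iii) become short, and Theorem~\ref{structure4} follows by taking $t=t_0$, $n$ past both thresholds, and $k=\max(k_0,k_{\mathrm{conn}})$.
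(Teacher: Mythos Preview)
The paper does not actually prove Theorem~\ref{structure4}; it is an announcement paper, and Theorems~\ref{structure1}--\ref{structure5} are stated without proof, with the proofs deferred to the forthcoming series of structure papers. So there is no ``paper's own proof'' to compare against, and in particular your plan to derive Theorem~\ref{structure4} from Theorem~\ref{structure1} is not what the authors do: in their development Theorem~\ref{structure1} is a summary consequence of the finer results, not a lemma from which the finer results are recovered.

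More importantly, your derivation has a genuine gap. Your strategy is to apply Theorem~\ref{structure1} and then \emph{exclude} outcomes (ii) and (iii). But the paper explicitly notes that the outcomes of Theorem~\ref{structure1} are not mutually exclusive, and your exclusion of (iii) is simply false. You argue that a subfield-confined matroid with a large $M(K_n)$-minor must, by growth-rate considerations, contain a $\PG(m_0-1,\bFp)$-minor. It need not: $M(K_n)$ itself is confined to every subfield (it is regular) and has no $\PG(2,\bFp)$-minor for any prime, so ``confined to a subfield'' plus ``large clique minor'' does not force a projective-geometry minor. Thus outcome (iii) can hold for $N$ while $M$ still has no $\PG(m_0-1,\bFp)$-minor, and you have not shown that such an $N$ is within bounded perturbation distance of a frame matroid. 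Establishing that is essentially the content of Theorem~\ref{structure4} over $\bFp$, so the argument is circular.

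Your exclusion of (ii) is also not justified. The claim that a bounded-rank perturbation of a dual-frame matroid cannot carry a large $M(K_n)$-minor is asserted but not proved; your sketch conflates cographic matroids with general duals of $\bF$-represented frame matroids, and the heuristic that ``$M(K_n)$-minors survive undoing a bounded perturbation up to $f(t_0)$ loss'' is not a theorem you have available. The honest route here really does require the full structure machinery rather than bootstrapping from Theorem~\ref{structure1}.
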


Now we may restrict our attention to matroids that contain a $\PG(m_0-1,\bFp)$-minor,
where $m_0$ is arbitrarily large.

\begin{theorem}
\label{structure5}
Let $\mathbb F$ be a finite field and
let $m_1$ be a positive integer. Then there exist
$k, m_0, t\in \bZ_+$ such that,
if $M$ is a vertically $k$-connected
$\mathbb F$-represented matroid such that $\widetilde M$ has
a $\PG(m_0-1,\bFp)$-minor
but no $\PG(m_1-1,\bF)$-minor,
then $M$ is a rank-$(\le t)$ perturbation of
an $\bF$-represented matroid that is confined to a proper subfield of $\bF$.
\end{theorem}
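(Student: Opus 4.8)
The plan is to bootstrap from the general structure theorem (Theorem~\ref{structure1}) together with Theorem~\ref{structure4}, so that the only case requiring genuinely new work is outcome (iii): confinement to a subfield. First I would apply Theorem~\ref{structure1} to the minor-closed class $\cM$ of all $\bF$-represented matroids with no $\PG(m_1-1,\bF)$-minor (this class is proper since $\PG(m_1-1,\bF)$ itself is excluded). This yields constants $k_0,t_0$ so that our vertically $k$-connected $M$ is a rank-$(\le t_0)$ perturbation of some $N$ satisfying (i), (ii), or (iii). The strategy is to choose $m_0$ large enough (as a function of $m_1$, $\bF$, $k_0$, $t_0$) that the hypothesis ``$\widetilde M$ has a $\PG(m_0-1,\bFp)$-minor'' rules out the frame cases (i) and (ii), except possibly after absorbing a bounded-rank discrepancy into the perturbation.

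The key geometric input is that frame matroids and their duals are ``thin'': a simple rank-$r$ represented frame matroid over $\bF$ has at most $\binom{r}{2}|\bFmult|+r$ elements, which is polynomial in $r$, whereas $\PG(m_0-1,\bFp)$ has exponentially many points in $m_0$. Since a rank-$(\le t_0)$ perturbation changes the rank by at most $t_0$ and, by Lemma~\ref{dist-pert}, corresponds to at most $2t_0$ elementary lifts and projections, a projective-geometry minor of $\widetilde M$ of rank $m_0$ survives, after undoing the perturbation, as a minor of $N$ of rank at least $m_0-2t_0$ that is ``close'' (within bounded perturbation distance) to $\PG(m_0-2t_0-1,\bFp)$. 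If $N$ is a frame matroid or the dual of one, then so is this minor; but a bounded-rank perturbation of a frame matroid still cannot contain a dense projective geometry once the rank is large --- here I would invoke a growth-rate argument (e.g.\ the Geelen--Kung--Whittle growth-rate theorem, or the more elementary counting bound above combined with the perturbation bound) to conclude that $\PG(m_0-2t_0-1,\bFp)$ has too many points to be a bounded perturbation of a frame matroid of that rank, provided $m_0$ was chosen large enough relative to $t_0$ and $|\bF|$. Hence outcomes (i) and (ii) are impossible, and we are forced into outcome (iii): $M$ is a rank-$(\le t_0)$ perturbation of some $N$ confined to a subfield $\bF'$ of $\bF$.

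It remains to argue that $\bF'$ is a \emph{proper} subfield, i.e.\ that $\bF'\ne\bF$. This is where the hypothesis ``$\widetilde M$ has no $\PG(m_1-1,\bF)$-minor'' is used, together with the choice of $\cM$: since $N\in\cM$ (as $N$ is a bounded perturbation of $M\in\cM$ --- strictly, one must check $\cM$ or a slightly enlarged bounded-perturbation-closed class contains $N$, which follows because excluding $\PG(m_1-1,\bF)$ is preserved under passing between $M$ and $N$ up to adjusting $m_1$), $N$ cannot be confined to $\bF$ itself in a way that still admits the large projective geometry $\PG(m_0-1,\bFp)$ as a minor while forbidding $\PG(m_1-1,\bF)$; if $\bF'=\bF$ then $N$ would just be an arbitrary $\bF$-represented matroid with a huge $\bFp$-projective-geometry minor, and by a Ramsey-type / growth-rate argument such a matroid of sufficiently high rank must contain $\PG(m_1-1,\bF)$ as a minor, contradicting membership in $\cM$. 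Thus $\bF'$ is proper, and setting $t=t_0$, $k=k_0$, and $m_0$ as chosen completes the proof.

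The main obstacle I anticipate is the bookkeeping around perturbations: tracking a $\PG(m_0-1,\bFp)$-minor through an inverse rank-$(\le t_0)$ perturbation and still extracting from $N$ a minor dense enough to contradict the frame growth rate, and symmetrically, ensuring that confinement to $\bF$ (rather than a proper subfield) is incompatible with a large $\bFp$-projective-geometry minor under the excluded-minor hypothesis. Both points reduce to quantitative growth-rate estimates --- the frame/cographic growth rate is quadratic, the subfield growth rate over $\bF'$ is a fixed exponential base $|\bF'|$, and over $\bF$ it is the larger base $|\bF|$ --- so the heart of the argument is choosing $m_0$ (and the intermediate rank thresholds) large enough that these separating inequalities hold with the bounded perturbation slack $t_0$ absorbed; none of this is conceptually hard but it is where the real estimates live.
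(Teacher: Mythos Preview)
The paper does not prove Theorem~\ref{structure5}; it is announced together with Theorems~\ref{structure1}--\ref{structure4} as part of the structure theory whose proofs are deferred to the forthcoming series \cite{ggw1}. So there is no in-paper proof to compare against, and any derivation you give must stand on its own.

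Your plan to bootstrap from Theorem~\ref{structure1} is logically circular in the paper's framework: Theorems~\ref{structure3}--\ref{structure5} are presented as the refined results that \emph{explain} the trichotomy in Theorem~\ref{structure1}, not as corollaries of it. Worse, outcome~(iii) of Theorem~\ref{structure1} as literally stated (``confined to a subfield of $\bF$'') is satisfied by every $\bF$-represented matroid with $N=M$ and $t=0$, so read as a black box it is vacuous; the non-trivial content --- that the subfield may be taken proper --- is exactly what Theorem~\ref{structure5} supplies, and you cannot extract it from Theorem~\ref{structure1} without assuming what you are trying to prove.

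Even setting that aside, your properness argument has a genuine gap. You want to conclude that if $N$ is confined only to $\bF$ itself and has a large $\PG(m_0-1,\bFp)$-minor then $N$ (or $M$) must contain $\PG(m_1-1,\bF)$. This is false without further structural input: an $\bFp$-represented matroid, viewed over $\bF$, is ``confined to $\bF$'' in the trivial sense, has arbitrarily large $\bFp$-projective-geometry minors, yet has no $\PG(2,\bF)$-minor when $\bF\ne\bFp$. Your suggestion to ``adjust $m_1$'' is not available, since $m_1$ is fixed in the hypothesis; and transferring the excluded-minor condition from $M$ to $N$ across a rank-$(\le t_0)$ perturbation is not free either. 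The density argument you sketch for eliminating the frame and dual-frame cases is sound in spirit (it does come down to the fact that a rank-$(\le t)$ perturbation of a frame matroid still has polynomially bounded simple minors, whereas $\PG(m_0-1,\bFp)$ is exponentially dense), but the real work in Theorem~\ref{structure5} --- forcing confinement to a \emph{proper} subfield --- is precisely the content of \cite{ggw1}, and your outline does not supply it.
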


\section{Refinements of the structure}

While Theorem~\ref{structure1} says a lot about 
the structure of highly connected matroids in
minor-closed classes, we can say considerably more.
Indeed, we give a precise structural characterization
of the sufficiently connected matroids.

Before we state these stronger results, we first clarify some notation.
A matrix over a field $\bF$ with rows indexed by  a set $R$
and columns indexed by a set $C$ is an element of $\bF^{R\times C}$;
in particular, matrices do not have ordered rows and columns.
A matrix $A_1\in \bF^{R_1 \times C_1}$ is {\em isomorphic}
to a matrix $A_2\in \bF^{R_2 \times C_2}$ if there
exist bijections $\phi_r:R_1\rightarrow R_2$ and
$\phi_c:C_1\rightarrow C_2$ such that
$A_1[i,j] = A_2[\phi_r(i),\phi_c(j)]$ for each $i\in R_1$
and $j\in C_1$.
A {\em unit vector} is one that contains
exactly one non-zero entry and that entry is $1$.

\subsection*{Confined to a subfield}

We first consider the structure of highly connected matroids that
contain high-rank projective geometries over a proper subfield. In this case,
by Theorem~\ref{structure5}, the  matroids of interest are
a low-rank perturbation from being representable over a subfield.
The structure theorem is somewhat technical; 
to facilitate the description we capture much of the complexity in 
a ``template''.

Let $\mathbb F$ be a finite field. Then a
{\em subfield template} over $\mathbb F$ is a tuple 
$\Phi=(\bF_0,C,D,Y, A_1,A_2, \Delta,\Lambda)$
such that the following hold.
\begin{itemize}
\item[(i)] $\bF_0$ is a subfield of $\bF$.
\item[(ii)] $C$, $D$ and $Y$ are disjoint finite sets.
\item[(iii)] $A_1\in \bF^{D\times C}$ and $A_2\in \bF_0^{D\times Y}$.
\item[(iv)]  $\Lambda$ is a subspace of $\bF_0^D$ and
$\Delta$ is a subspace of $\bF_0^{C\cup Y}$.
\end{itemize}

Let $\Phi = (\bF_0,C,D,Y, A_1,A_2, \Delta,\Lambda)$ be a 
subfield template. Let $E$ be a finite set, let $B\subseteq E$,
and let $A\in \bF^{B\times (E-B)}$.
We say that $A$ {\em conforms} to $\Phi$ if 
the following hold.
\begin{itemize}
\item[(i)] $D\subseteq B$ and $C,Y\subseteq E-B$.
\item[(ii)] $A[D,C] = A_1$, $A[D,Y]=A_2$, and 
all entries of $A$ other than the entries in $A[D,C]$
are contained in the subfield $\bF_0$.
\item[(iii)] Each column of $A[D, E-(B\cup C\cup Y)]$ is contained
in $\Lambda$.
\item[(iv)] Each row of $A[C\cup Y, B-D]$ is contained
in $\Delta$.
\end{itemize}

Figure~\ref{template1} shows the structure of $A$.

\begin{figure}
\begin{center}\begin{tabular}{r|c|c|p{5cm}|}
\multicolumn{1}{c}{}&\multicolumn{1}{c}{$C$}&\multicolumn{1}{c}{$Y$}&\multicolumn{1}{c}{}\\ \cline{2-4}
$D$ &$A_1$&$A_2$&\multicolumn{1}{c|}{\mbox{columns from }$\Lambda$} \\ \cline{2-4}
  &\multicolumn{2}{l|}{}& \\
  &\multicolumn{2}{c|}{rows}& \\
  &\multicolumn{2}{c|}{from}&\multicolumn{1}{c|}{entries from $\bF_0$}\\
  &\multicolumn{2}{c|}{$\Delta$}& \\
  &\multicolumn{2}{l|}{}& \\
\cline{2-4}
\end{tabular}
\end{center}
\caption{The structure of $A$.}
\label{template1}
\end{figure}

Let $M$ be an $\bF$-represented matroid.
We say that $M$ {\em conforms} to $\Phi$
if there is a matrix $A$ that conforms to $\Phi$
such that $M$ is equivalent to $M([I,A])\con C\del D$ 
up to isomorphism and projective transformations.
Let $\cM(\Phi)$ denote the set of $\bF$-represented matroids 
that conform to $\Phi$.
The following theorem is a corollary of the main result in \cite{ggw1};
the actual derivation of Theorem~\ref{refinement1} from \cite{ggw1}
will be given in a later paper.

\begin{theorem}
\label{refinement1}
Let $\mathbb F$ be a finite field and let 
$\mathcal M$ be a minor-closed class of $\mathbb F$-represented matroids.
Then there exist $k,m\in\bZ_+$ and
subfield templates $\Phi_1,\ldots,\Phi_t$ such that
\begin{itemize}
\item
$\cM$ contains each of
the classes $\cM(\Phi_1),\ldots,\cM(\Phi_t)$, and
\item
if $M$ is a simple vertically $k$-connected member of $\mathcal M$
and $\widetilde M$ has a $PG(m-1,\bFp)$-minor, 
then $M$ is member of at least one of the classes
$\cM(\Phi_1),\ldots,\cM(\Phi_t)$.
\end{itemize}
\end{theorem}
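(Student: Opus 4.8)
The plan is to derive Theorem~\ref{refinement1} from the main result of \cite{ggw1}, using Theorem~\ref{structure5} (and Lemma~\ref{dist-pert}) as the bridge that delivers a matroid ``near'' a subfield and then the representation-theoretic results of \cite{ggw1} to normalise that perturbation into the exact template form. Since the paper states that ``the actual derivation of Theorem~\ref{refinement1} from \cite{ggw1} will be given in a later paper'', I will only sketch the shape of the argument here.

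First I would apply Theorem~\ref{structure5} with $\bF$ and a suitable $m_1$ chosen so that any $M\in\cM$ with a $\PG(m_1-1,\bF)$-minor forces $\cM$ to contain all $\bF$-represented matroids (which it does not, unless $\cM$ is the whole class, a case handled trivially). Thus for $m$ large enough, a simple vertically $k$-connected $M\in\cM$ with a $\PG(m-1,\bFp)$-minor has no $\PG(m_1-1,\bF)$-minor, so Theorem~\ref{structure5} yields $k,m_0,t_0\in\bZ_+$ with $M$ a rank-$(\le t_0)$ perturbation of an $\bF$-represented matroid $N$ confined to a proper subfield $\bF_0$ of $\bF$. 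By Lemma~\ref{dist-pert}, $M$ is obtained from $N$ by at most $2t_0$ elementary lifts and projections. Fix a $B\times(E-B)$ matrix representation $[I,A']$ of $N$ over $\bF_0$; the lifts and projections each add one new row or column and then delete a row or column, so after performing all of them $M$ is represented by a matrix $[I,A]$ over $\bF$ in which $A$ differs from (a submatrix of) $A'$ only through the action of these $\le 2t_0$ rank-one operations. This is precisely the situation analysed in \cite{ggw1}: the rows and columns touched by the perturbation are confined to a bounded-dimensional space of ``exceptional'' patterns.

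The next step is to collect the bounded data produced by the perturbation into a template. The $\le 2t_0$ new rows that were coextended constitute a bounded set $D\subseteq B$; the $\le 2t_0$ new columns constitute a bounded set $C\subseteq E-B$; the sets $C$ and $D$, the submatrices $A_1=A[D,C]$ over $\bF$ and $A_2=A[D,Y]$ over $\bF_0$ (where $Y$ captures the finitely many further exceptional columns flagged by the analysis in \cite{ggw1}), and the subspaces $\Lambda\le\bF_0^D$ and $\Delta\le\bF_0^{C\cup Y}$ recording the allowed column patterns in the $D$-rows and row patterns in the $(C\cup Y)$-columns, together form a subfield template $\Phi=(\bF_0,C,D,Y,A_1,A_2,\Delta,\Lambda)$. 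Because all of $t_0$, $|D|$, $|C|$, $|Y|$, and $\dim\Lambda$, $\dim\Delta$ are bounded in terms of $\bF$ and $\cM$ only, there are only finitely many templates $\Phi$ arising this way; list them as $\Phi_1,\dots,\Phi_t$. One then verifies that $M$ conforms to the corresponding $\Phi_i$ in the sense defined above, i.e. $M$ is equivalent up to isomorphism and projective transformation to $M([I,A])\con C\del D$ for a matrix $A$ conforming to $\Phi_i$ — this is essentially bookkeeping on how the lift/project operations interact with the fixed $\bF_0$-representation. For the first bullet, one checks that each $\cM(\Phi_i)$ is contained in $\cM$: every matroid conforming to $\Phi_i$ is a bounded-rank perturbation of a matroid representable over $\bF_0$, hence (since $\cM$ contains the $\PG(m_0-1,\bFp)$-minor that generated $\Phi_i$, and $\cM$ is minor-closed) lies in $\cM$; more carefully, $\Phi_i$ was extracted from a genuine member of $\cM$, and membership is preserved because conforming matroids embed as minors of the ``universal'' conforming matroid, which is itself in $\cM$ by the minor-closure argument of \cite{ggw1}.

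The main obstacle is the second step: turning the abstract ``rank-$(\le t_0)$ perturbation of something over $\bF_0$'' into the rigid template shape, with the perturbation confined to a \emph{bounded} number of rows $D$ and columns $C$ plus bounded-dimensional row/column spaces $\Delta,\Lambda$. A priori a rank-$t_0$ perturbation affects the whole matrix, not a bounded window of it; the content of \cite{ggw1} is exactly that, for sufficiently connected matroids with a large projective-geometry minor over the prime subfield, the perturbation can be ``localised'' after a change of generator matrix — the exceptional behaviour is captured by finitely many rows, finitely many columns, and a bounded-dimensional space of deviations elsewhere. Extracting the template from that localisation, and checking that only finitely many templates occur and that each $\cM(\Phi_i)\subseteq\cM$, is then routine but lengthy; it is this part that the authors defer to the later paper, and I would structure the full proof around first proving the localisation lemma in the form needed and then performing the extraction.
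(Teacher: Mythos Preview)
The paper does not prove Theorem~\ref{refinement1}. Immediately before the statement the authors say: ``The following theorem is a corollary of the main result in \cite{ggw1}; the actual derivation of Theorem~\ref{refinement1} from \cite{ggw1} will be given in a later paper.'' There is therefore no proof in this paper against which your sketch can be compared; you have correctly noted this yourself.

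As a standalone outline your sketch has the right overall shape --- obtain a bounded-rank perturbation of a subfield representation via Theorem~\ref{structure5}, then invoke \cite{ggw1} to localise the perturbation into the bounded window $(C,D,Y,A_1,A_2,\Delta,\Lambda)$ that defines a template. The one place where the argument is genuinely incomplete rather than merely sketched is the first bullet, the containment $\cM(\Phi_i)\subseteq\cM$. Your justification is that each $\Phi_i$ was ``extracted from a genuine member of $\cM$'' and that ``conforming matroids embed as minors of the `universal' conforming matroid, which is itself in $\cM$ by the minor-closure argument of \cite{ggw1}''. But minor-closure runs the wrong way here: knowing that one particular $M\in\cM$ conforms to $\Phi_i$ gives no reason for the much larger universal conforming matroid to lie in $\cM$. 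In this kind of exact structure theorem the containment $\cM(\Phi_i)\subseteq\cM$ is typically arranged by a separate step --- either by retaining from the finite list of candidate templates only those $\Phi$ with $\cM(\Phi)\subseteq\cM$ and then arguing (non-trivially) that the covering property in the second bullet survives this pruning, or by a descent argument that replaces any template $\Phi$ with $\cM(\Phi)\not\subseteq\cM$ by a strictly smaller one to which the given $M$ still conforms. That step carries real content and cannot be absorbed into a citation; it is presumably part of what the authors defer to the later paper.
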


\subsection*{Perturbations of represented frame matroids}

We now consider the highly connected matroids
with no $\PG(m-1,\bFp)$-minor that do have an $M(K_n)$-minor,
where $n \gg m$.
By Theorem~\ref{structure4}, these are low-rank perturbations
of represented frame matroids.
We will state an analogue of Theorem~\ref{refinement1}, 
but in this case the structure is a bit more cumbersome.

Let $\mathbb F$ be a finite field. Then a
{\em frame template} over $\mathbb F$ is a tuple 
$\Phi=(\Gamma,C,D,X,Y_0,Y_1, A_1, \Delta,\Lambda)$
such that the following hold.
\begin{itemize}
\item[(i)] $\Gamma$ is a subgroup of the multiplicative
group of $\bF$.
\item[(ii)] $C$, $D$, $X$, $Y_0$ and $Y_1$ are disjoint finite sets.
\item[(iii)] $A_1\in \bF^{(D\cup X)\times (C\cup Y_0\cup Y_1)}$.
\item[(iv)]  $\Lambda$ is a subgroup of the additive group of $\bF^D$ 
and is closed under scaling by elements of $\Gamma$.
\item[(v)]
$\Delta$ is a subgroup of the additive group of $\bF^{C\cup Y_0\cup Y_1}$
and is closed under scaling by elements of $\Gamma$.
\end{itemize}

Let $\Phi = (\Gamma,C,D,X,Y_0,Y_1,  A_1, \Delta,\Lambda)$ be a 
frame template. Let $E$ be a finite set, let $B\subseteq E$,
and let $A'\in \bF^{B\times (E-B)}$.
We say that $A'$ {\em respects} $\Phi$ if 
the following hold.
\begin{itemize}
\item[(i)] $D, X\subseteq B$ and $C,Y_0,Y_1\subseteq E-B$.
\item[(ii)] $A'[D\cup X,C\cup Y_0\cup Y_1] = A_1$ and $A'[X, E-(B\cup C\cup Y_0\cup Y_1)] =0$.
\item[(iii)] There exists a set $Z\subseteq E - (B\cup C\cup Y_0\cup Y_1)$ such that
$A'[D,Z] = 0$, $A'[B-(D\cup X),E-(B\cup C\cup Z\cup Y_0\cup Y_1)]$ is a 
$\Gamma$-frame matrix, and each column of 
$A'[B-(D\cup X), Z]$ is a unit vector.
\item[(iv)]
Each column of $A'[D, E-(B\cup C\cup Y_0\cup Y_1\cup Z)]$ is contained
in $\Lambda$.
\item[(v)] Each row of $A'[B-(D\cup X),C\cup Y_0\cup Y_1]$ is contained
in $\Delta$.
\end{itemize}

Figure~\ref{template2} shows the structure of $A'$.

\begin{figure}
\begin{center}\begin{tabular}{r|p{5.6cm}|c|c|c|c|}
\multicolumn{2}{c}{}& \multicolumn{1}{c}{$Z$}&\multicolumn{1}{c}{$Y_0$}&
\multicolumn{1}{c}{$Y_1$}&\multicolumn{1}{c}{$C$} \\ \cline{2-6}
\begin{tabular}{r}$X$\\ $D$\end{tabular}&
\mbox{}\hspace*{-.25cm}
\begin{tabular}{p{1cm}cp{1cm}}&$0$&\\ \hline &columns from $\Lambda$&\end{tabular}&
0&\multicolumn{3}{c|}{$A_1$}\\ \cline{2-6}
&&&\multicolumn{3}{c|}{}\\
&&&\multicolumn{3}{c|}{rows}\\
&\mbox{}\hfill $\Gamma$-frame matrix \hfill\mbox{}&unit columns&\multicolumn{3}{c|}{from} \\
&&&\multicolumn{3}{c|}{$\Delta$}\\
&&&\multicolumn{3}{c|}{}\\
\cline{2-6}
\end{tabular}
\end{center}
\caption{The structure of $A'$.}
\label{template2}
\end{figure}

Suppose that $A'$ respects $\Phi$ and that $Z$ satisfies (iii) above.
Now suppose that $A\in \bF^{B\times (E-B)}$
satisfies the following conditions.
\begin{itemize}
\item[(i)] $A[B,E-(B\cup Z)] = A'[B,E-(B\cup Z)]$.
\item[(ii)] For each $i\in Z$ there exists $j\in Y_1$ such that
the $i$-th column of $A$ is the sum of the $i$-th and the $j$-th
columns of $A'$.
\end{itemize}
We say that any such matrix $A$ {\em conforms} to $\Phi$.

Let $M$ be an $\bF$-represented matroid.
We say that $M$ {\em conforms} to $\Phi$
if there is a matrix $A$ that conforms to $\Phi$
such that $M$ is equivalent to $M([I,A])\con C\del ((B-X)\cup Y_1)$ 
up to isomorphism and projective transformations.
Let $\cM(\Phi)$ denote the set of $\bF$-represented matroids 
that conform to $\Phi$.  We will prove the following theorem 
in a later paper.

\begin{theorem}
\label{refinement2}
Let $\mathbb F$ be a finite field, let $m$ be a positive integer, and let 
$\mathcal M$ be a minor-closed class of $\mathbb F$-represented matroids.
Then there exist $k\in\bZ_+$ and
frame templates $\Phi_1,\ldots,\Phi_s,\Psi_1,\ldots,\Psi_t$ such that
\begin{itemize}
\item
$\cM$ contains each of the classes
$\cM(\Phi_1),\ldots,\cM(\Phi_s)$,
\item
$\cM$ contains the duals of the matroids in each of the classes
$\cM(\Psi_1),\ldots,\cM(\Psi_t)$, and
\item
if $M$ is a simple vertically $k$-connected member of $\mathcal M$
and $\widetilde M$ has no $PG(m-1,\bFp)$-minor, 
then either
$M$ is a member of at least one of the classes
$\cM(\Phi_1),\ldots,\cM(\Phi_s)$, or
$M^*$ is a member of at least one of the classes
$\cM(\Psi_1),\ldots,\cM(\Psi_t)$.
\end{itemize}
\end{theorem}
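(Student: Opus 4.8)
The plan is to derive Theorem~\ref{refinement2} as a refinement of Theorem~\ref{structure4} together with the more detailed matroid-structure theory underlying \cite{ggw1}, paralleling the derivation of Theorem~\ref{refinement1} from Theorem~\ref{structure5}. First I would fix a minor-closed class $\cM$ and the parameter $m$, and apply Theorem~\ref{structure4} (in both primal and dual forms, to account for the two families $\Phi_i$ and $\Psi_j$) to obtain constants $k_0,n,t$ so that every simple vertically $k_0$-connected $M\in\cM$ with no $\PG(m-1,\bFp)$-minor either (a) has no $M(K_n)$-minor, or (b) is a rank-$(\le t)$ perturbation of an $\bF$-represented frame matroid, and dually. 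Case (a) is handled by Theorem~\ref{structure3}: raising the connectivity threshold to some $k_1\ge k_0$ forces an $M(K_n)$- or $M(K_n)^*$-minor, so (up to replacing $M$ by $M^*$) we may assume (b) holds, i.e. $\pert(M,N)\le t$ for some $\bF$-represented frame matroid $N=M(A_N)$ with $A_N$ a frame matrix.

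The core of the argument is then to show that, by increasing the connectivity $k$ still further, a rank-$(\le t)$ perturbation of a sufficiently connected $\bF$-represented frame matroid can be \emph{described globally} by one of finitely many frame templates. Here I would invoke the structural analysis of frame matroids: a highly connected frame matroid, after normalizing, is essentially a $\Gamma$-frame matroid $M(A)$ for some subgroup $\Gamma\le\bFmult$ (the group being forced, up to finitely many choices, by the minor-closed class via a Ramsey/compactness argument on the ``signs'' appearing on the spanning clique's edges). A rank-$t$ perturbation is the addition of a matrix $P$ with $\rank(P)\le t$; writing $P$ as a sum of at most $t$ rank-one matrices and choosing a basis adapted to a column factorization $P=R^\top S$, the $t$ ``new rows'' and the columns where $P$ is supported can be collected into the template data: the rows $X$ and columns $C$ absorb the perturbation, $Y_0\cup Y_1$ and the set $Z$ encode the clean-up operations $\con C\del((B-X)\cup Y_1)$ and the ``column-addition'' modification in the definition of \emph{conforms}, and $\Lambda,\Delta$ record the (finitely many, by another compactness step) possible column/row patterns in the perturbed part that keep $M$ inside $\cM$. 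High vertical connectivity is what guarantees the perturbation interacts with the frame structure in a bounded, globally coherent way rather than being scattered; this is the step where one transfers ``local'' structural statements about $3$-separations into a ``global'' template. Finally one checks the converse inclusion $\cM(\Phi_i)\subseteq\cM$ (and $\cM(\Psi_j)^*\subseteq\cM$) by verifying that every matroid conforming to a template that arose this way is a minor of something already shown to be in $\cM$; since templates are defined by closure conditions ($\Lambda,\Delta$ subgroups, $\Gamma$-frame blocks), this reduces to finitely many minor-checks.

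The main obstacle I expect is the template-extraction step in full generality: showing that the unbounded-rank frame part together with the bounded-rank perturbation can \emph{simultaneously} be put into the rigid normal form of Figure~\ref{template2}, with all the auxiliary sets $X,Y_0,Y_1,Z$ and subgroups $\Gamma,\Lambda,\Delta$ well-defined and \emph{finitely many} up to isomorphism. Two difficulties compound here. First, the perturbation and the frame structure are not independent — a rank-$1$ change can, in a frame matroid, be "absorbed" by a vertex in many inequivalent ways, and one must argue that high connectivity pins this down to finitely many options. Second, the ``column-addition'' operation built into the definition of \emph{conforms} (condition (ii) relating $A$ and $A'$ via columns indexed by $Z$ and $Y_1$) is precisely the device needed to handle the case where the perturbation creates elements that are not themselves frame-like but become frame-like after a contraction; getting the bookkeeping right so that $M([I,A])\con C\del((B-X)\cup Y_1)$ recovers exactly $M$ is delicate. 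I would isolate this as a lemma: ``every rank-$(\le t)$ perturbation of a vertically $k$-connected $\bF$-represented $\Gamma$-frame matroid conforms to one of finitely many frame templates'', prove it by induction on $t$ (the $t=0$ case being the normalization of highly connected frame matroids themselves), and then assemble Theorem~\ref{refinement2} from this lemma together with Theorems~\ref{structure3} and~\ref{structure4} as above. As in the remark following the statement of Theorem~\ref{refinement1}, the detailed extraction of this lemma from \cite{ggw1} and its companion papers will be deferred; here I only indicate its shape and its role.
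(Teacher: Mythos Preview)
The paper does not prove Theorem~\ref{refinement2}; it explicitly states just before the theorem that ``We will prove the following theorem in a later paper.'' There is therefore no proof in the paper to compare your proposal against. Your outline is consistent with the scaffolding the paper does provide (reducing via Theorems~\ref{structure3} and~\ref{structure4} to rank-$(\le t)$ perturbations of frame matroids, then extracting a finite list of templates), and you yourself acknowledge in the final sentence that the detailed extraction is deferred, just as the paper does.
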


\section{Girth Conjectures}

We now turn our attention to conjectures for which we believe that the previous theorems may 
be of some use. We begin with conjectures related to the girth of matroids.

Recall that the {\em girth} of a matroid is the size of its smallest circuit. 
The problem of determining the distance of a binary linear code is equivalent to 
finding the girth of an associated binary matroid.  The connection with coding
theory guaranteed that the problem of determining the girth of a binary matroid
attracted considerable attention; the problem was eventually
shown to be NP-hard by Vardy \cite{vardy}.

On the other hand finding the girth of a graph is easily seen to 
reduce to a shortest-path problem
and can hence be solved in polynomial time.  Finding the girth of a 
cographic matroid is the problem of finding a minimum cutset in the graph which
is also polynomial-time solvable.  The next conjecture follows 
the line of thought that if a property holds for both the class of graphic matroids
and the class of cographic matroids, then it may well extend to all proper minor-closed
classes of binary matroids.

\begin{conjecture}
\label{girth1}
Let $\mathcal M$ be a proper minor-closed class of binary matroids. Then there is 
a polynomial-time algorithm that, given as input a member $M$ of $\mathcal M$, 
determines the girth of $M$.
\end{conjecture}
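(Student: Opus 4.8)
The plan is to combine the structure theory with the fact that the girth is a ``finite-state'' parameter with respect to tree decompositions of bounded adhesion. We work over $\bF=\bF_2$, where matroids are uniquely representable, so we move freely between binary matroids and $\bF_2$-represented matroids.

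\emph{Reduction to highly connected matroids.} Since every binary matroid is a minor of a sufficiently large binary projective geometry and $\mathcal M$ is proper, there is a constant $m_0$ such that no member of $\mathcal M$ has a $\PG(m_0-1,\bF_2)$-minor. Using the full structure theorem for $\mathcal M$ announced in the introduction, we obtain for each $M\in\mathcal M$, in polynomial time, a tree decomposition of $M$ of adhesion less than a constant $k'=k'(\mathcal M)$ in which every torso is either (a) of branch-width at most a constant $b(\mathcal M)$, or (b) vertically $k$-connected for a suitable constant $k$ and without a $\PG(m_1-1,\bF_2)$-minor for a suitable constant $m_1\ge m_0$. For a torso $T$ of type (b), Theorem~\ref{structure3} provides an $M(K_n)$-minor or an $M(K_n)^{*}$-minor of $T$, and then Theorem~\ref{structure4}, applied to $T$ or to $T^{*}$, shows that $T$ is a rank-$(\le t)$ perturbation of a binary represented frame matroid or of the dual of one. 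A binary represented frame matroid is generated by a $\{0,1\}$-matrix with at most two nonzero entries per column -- in essence the incidence matrix of a graph -- hence is graphic; so every type-(b) torso is a rank-$(\le t)$ perturbation of a graphic or a cographic matroid. (Theorem~\ref{refinement2} would give the sharper statement that each such torso conforms to one of finitely many frame templates, but we do not need this refinement.)

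The girth is finite-state for such a decomposition. If $(X,Y)$ is a displayed separation with overlap space $W$, so $\dim W<k'$, and $C$ is a cycle of $M$ with binary representation $\{v_e\}$, then $\sum_{e\in C\cap X}v_e=\sum_{e\in C\cap Y}v_e\in W$; so the only information about $C$ that must cross the separation is this vector of $W$ -- at most $2^{k'}$ values -- together with $|C|$ so far and whether $C$ is empty. A standard dynamic programme over the decomposition tree then reduces the computation of the girth of $M$ to a bounded number of instances, in each torso $T$, of the following local problem: given a target vector $w$, find the minimum size of a nonempty set $C_T\subseteq E(T)$ with $\sum_{e\in C_T}v_e=w$. (Summaries of already-processed subtrees are incorporated into a torso as bounded-complexity local gadgets, which do not disturb the rank-$(\le t)$ perturbation structure.) For torsos of bounded branch-width this local problem is handled by the polynomial-time dynamic programme available for finite-state properties of bounded-branch-width matroids over a fixed finite field (Hlin\v{e}n\'y's matroid analogue of Courcelle's theorem). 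For a highly connected torso, take a generator matrix $A=A_N+P$ with $\rank P\le t$ and $N=M(G)$ graphic, the cographic case being dual. Fix a basis $c_1,\dots,c_t$ of the column space of $P$ and write each column of $P$ as $\sum_i\alpha_i(e)c_i$. Then $\sum_{e\in C_T}v_e=w$ holds exactly when there is some $\beta\in\bF_2^{t}$ with $\sum_{e\in C_T}(A_N)_e=w+\sum_i\beta_ic_i$ and $\sum_{e\in C_T}\alpha_i(e)=\beta_i$ for every $i$. For each of the $2^{t}$ choices of $\beta$, the first equation says that $C_T$ is a $T_\beta$-join of $G$ for the fixed vertex set $T_\beta$ that is the support of $w+\sum_i\beta_ic_i$, and the remaining equations impose $t$ prescribed $\bF_2$-linear parity conditions on $C_T$. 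Minimizing $|C_T|$ over nonempty $T_\beta$-joins subject to a bounded number of parity conditions is polynomial: pass to the $\bF_2^{t}$-cover $\widehat G$ of $G$ -- vertex set $V(G)\times\bF_2^{t}$, each edge lifted according to its label $(\alpha_1(e),\dots,\alpha_t(e))$ -- in which the constrained problem becomes an ordinary minimum $T$-join problem (or, when $T_\beta=\emptyset$, a shortest nontrivial cycle problem) on a graph of polynomial size, solvable by matching and shortest-path algorithms. Taking the minimum over $\beta$ answers the local problem when $N$ is graphic; when $N^{*}$ is graphic the same argument applies with $T$-joins replaced by minimum cuts subject to a bounded number of parity conditions. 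Combining the dynamic programme with these torso computations yields a polynomial-time algorithm for the girth of any $M\in\mathcal M$.

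\emph{The main obstacle.} The crux is making the structure theory algorithmic: producing the tree decomposition together with, for each highly connected torso, an explicit graph $G$ and rank-$(\le t)$ perturbation realizing Theorem~\ref{structure4} (or Theorem~\ref{refinement2}). This requires the proofs of the structure theorems to be made constructive, and one must also control the classes in which the torsos live, since a torso need not itself belong to $\mathcal M$. Given the decomposition and the perturbation data, the remaining steps -- the finite-state dynamic programming and the constrained $T$-join and minimum-cut computations -- are routine.
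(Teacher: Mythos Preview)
The paper does not prove this statement: it is posed as Conjecture~\ref{girth1}, and the authors say only that they ``have strong evidence'' for it and then isolate, as a step toward it, the separate conjecture that girth is polynomial for rank-$(\le t)$ perturbations of binary graphic matroids. So there is no proof in the paper to compare against; what you have written is a proof \emph{strategy} for an open problem, and you correctly flag its principal gap yourself.

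Your outline is broadly in the spirit the paper intends---reduce via structure to perturbed graphic/cographic pieces and solve those combinatorially---but several of the steps you present as routine are not. First, you appeal to ``the full structure theorem'' in a tree-decomposition form (bounded adhesion, torsos either of bounded branch-width or highly connected and hence covered by Theorem~\ref{structure4}); the paper states only the highly-connected version, Theorem~\ref{structure1}, and neither states nor claims an algorithmic global decomposition of the kind you need. Second, even at the torso level, the paper singles out precisely your perturbed-graphic computation as an \emph{open} conjecture, so it cannot be treated as a known subroutine. Your $\bF_2^t$-cover reduction is attractive but not quite right as stated: a cycle in $\widehat G$ can project to a closed walk in $G$ that traverses some edge twice, which over $\bF_2$ collapses, so cycles upstairs do not correspond to nonempty constrained cycles downstairs; and a minimum $T$-join in $\widehat G$ is an edge set of $\widehat G$, not of $G$, so the projection need not be a $T_\beta$-join of the claimed size. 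The cographic case---``minimum cuts subject to a bounded number of parity conditions''---is asserted without argument and is not obviously polynomial. In short, your write-up is a reasonable research plan, but it is not a proof, and the paper itself regards even the highly-connected core of it as unresolved.
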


We have strong evidence that Conjecture~\ref{girth1} is true.
There is a natural extension of Conjecture~\ref{girth1}
to other fields,
although we have neither the evidence for, nor much faith in,
the following generalisation.

\begin{conjecture}
\label{girth2}
Let $\mathbb F$ be a finite field and let $\mathcal M$ be a minor-closed class of 
$\mathbb F$-represented matroids that does not contain all matroids over 
$\bFp$. Then there is a polynomial-time algorithm that,
given as input a generator matrix for a member $M$ of $\cM$,
determines the girth of $M$.
\end{conjecture}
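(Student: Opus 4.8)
The idea is to reduce to the highly connected case and then exploit Theorem~\ref{refinement2}. First, rephrase the hypothesis as an excluded-minor statement: since $\cM$ does not contain all matroids over $\bFp$, and since a projective geometry over $\bFp$ of sufficiently large rank is uniquely $\bF$-representable up to projective equivalence, there is a fixed $m$ (depending only on $\cM$) such that $\widetilde M$ has no $\PG(m-1,\bFp)$-minor for any $M\in\cM$; applying the same argument to the minor-closed class $\cM\cup\cM^*$, we may also insist that $\widetilde{M^*}$ has no $\PG(m-1,\bFp)$-minor. In particular the ``confined to a subfield'' outcome of the structure theory (Theorems~\ref{structure5} and~\ref{refinement1}) never occurs, and --- via Theorems~\ref{structure3} and~\ref{structure4}, sharpened by Theorem~\ref{refinement2} --- there are a constant $k$ and a finite list of frame templates $\Phi_1,\dots,\Phi_s,\Psi_1,\dots,\Psi_t$, computable from $\cM$, such that every simple vertically $k$-connected $M\in\cM$ either conforms to some $\Phi_i$ or has $M^*$ conforming to some $\Psi_j$.

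The first step of the algorithm is to reduce to vertically $k$-connected inputs. We may assume the girth of $M$ exceeds $2$ (loops give girth $1$ and parallel pairs give girth $2$, and both are found by inspection), so $M$ is simple. A smallest circuit of $M$ lies in a single connected component; and if $M$ is not vertically $k$-connected, fix a partition $(U,V)$ witnessing this. A minimal circuit either lies in $\cl(U)$ or in $\cl(V)$ --- in which case we recurse on a smaller matroid --- or it crosses, and then its trace on each side is constrained by the guts of the separation, a flat of bounded rank. Since there are only boundedly many ``crossing patterns'', a dynamic program over a decomposition of $M$ into vertically $k$-connected parts computes the girth; the low-order stages of this decomposition are the matroid analogue of breaking a graph into $3$-connected pieces to compute a shortest cycle. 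So it suffices to compute the girth of a simple vertically $k$-connected $M\in\cM$.

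Suppose first that such an $M$ conforms to a frame template $\Phi=(\Gamma,C,D,X,Y_0,Y_1,A_1,\Delta,\Lambda)$. In a conforming presentation, $M$ is obtained from $M([I,A])$ by a bounded contraction and deletion, where --- outside the bounded index sets $C,D,X,Y_0,Y_1$ and an auxiliary set $Z$ --- the matrix $A$ is a $\Gamma$-frame matrix augmented with unit columns and with columns from the bounded subgroup $\Lambda\le\bF^D$, while $\Delta\le\bF^{C\cup Y_0\cup Y_1}$ is bounded. Circuits of $M$ are the minimal supports of vectors orthogonal to a generator matrix; we enumerate the boundedly many ways a minimal circuit can meet the decorated rows and columns and the set $Z$, and for each fixed choice the residual task is to find a shortest circuit of a $\Gamma$-frame (biased-graph) matroid subject to a bounded system of linear constraints arising from $\Delta$, $\Lambda$ and the $Y_1$--$Z$ doubling. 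A shortest circuit of a biased graph is a shortest balanced cycle or a shortest pair of unbalanced cycles joined by a path; with the bounded side constraints absorbed into the edge labels, both are found in polynomial time by shortest-walk computations in an auxiliary $\Gamma$-labelled graph, and the minimum over all choices is the girth. When instead $M^*$ conforms to a template $\Psi$, the girth of $M$ equals the cogirth of $M^*$, i.e.\ the size of a smallest bond of a frame-like matroid, and the same enumeration reduces this to a family of minimum-cut computations in auxiliary labelled graphs, again polynomial.

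The step I expect to be the main obstacle is making the structure effective. Theorem~\ref{refinement2} asserts that a conforming presentation exists but does not produce one, and it is not clear that, from a raw generator matrix for $M$, one can recover in polynomial time the basis set, the bounded sets $C,D,X,Y_0,Y_1,Z$, and the underlying biased graph; this seems to require a constructive refinement of the structure theory. A secondary difficulty is that the shortest-circuit and minimum-bond subroutines for biased graphs must carry the global linear constraints imposed by $\Delta$ and $\Lambda$: one needs to encode those constraints by an auxiliary graph of polynomial size, and constrained shortest-cycle and minimum-cut problems are not automatically tractable. In the binary case of Conjecture~\ref{girth1}, both difficulties are milder --- ``frame'' becomes ``graphic'', $\Gamma$ is trivial, and one is computing shortest cycles and minimum cuts in a graph carrying only a bounded amount of extra data --- which is presumably why the authors treat Conjecture~\ref{girth1} as firm while holding Conjecture~\ref{girth2} at arm's length.
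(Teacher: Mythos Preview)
The statement you are attempting to prove is Conjecture~\ref{girth2}, which the paper presents explicitly as an \emph{open conjecture}; there is no proof in the paper to compare your proposal against. In fact the authors go out of their way to distance themselves from it: immediately before stating Conjecture~\ref{girth2} they write that they ``have neither the evidence for, nor much faith in'' this generalisation, in contrast with the binary case (Conjecture~\ref{girth1}), for which they claim ``strong evidence''. So your document is not a proof of a theorem in the paper but an attempted resolution of a problem the authors regard as genuinely open and possibly false.

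Viewed on its own terms, your sketch is a sensible outline of the natural approach via Theorem~\ref{refinement2}, and you correctly flag the two places where the argument is not yet a proof: making the template description algorithmically effective, and solving the constrained shortest-circuit and minimum-bond problems that arise from the decorations $\Delta,\Lambda,Y_1,Z$. A few further gaps are worth noting. First, the step ``applying the same argument to $\cM\cup\cM^*$ we may also insist that $\widetilde{M^*}$ has no $\PG(m-1,\bFp)$-minor'' is unjustified: the hypothesis that $\cM$ omits some $\bFp$-matroid says nothing about $\cM^*$, and in any case Theorem~\ref{refinement2} does not require it. Second, the reduction to vertically $k$-connected pieces is much more delicate than you suggest: a smallest circuit need not lie in $\cl(U)$ or $\cl(V)$ when $(U,V)$ is a low-order vertical separation, and ``boundedly many crossing patterns'' plus ``dynamic program over a decomposition'' hides a substantial body of work that does not currently exist for matroids at arbitrary connectivity $k$. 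Third, even granting a conforming presentation, the girth problem for $\Gamma$-frame matroids with $|\Gamma|>1$ is not the same as shortest cycle in a graph; circuits include handcuffs and theta subgraphs, and while a bounded $\Gamma$ makes the covering-graph trick available for balanced cycles, the interaction with the $\Delta$- and $\Lambda$-constraints and the $Z$--$Y_1$ column sums is where the difficulty concentrates. These are exactly the reasons the authors treat Conjecture~\ref{girth2} with caution.
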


Now there is no reason to suspect that the above conjectures should
reduce to highly connected instances, but the structure theorems
do suggest the following step towards Conjecture~\ref{girth1}.
\begin{conjecture}
Let $t\in\bZ_+$. There is a polynomial-time algorithm that,
given as input three binary matrices $A,B,P$ such that
$B$ is the incidence matrix of a graph, $\rank(P)\le t$, and $A=B+P$,
determines the girth of $M(A)$.
\end{conjecture}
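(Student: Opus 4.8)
The plan is to exploit the fact that a rank-$t$ perturbation changes the matroid in a very controlled, low-dimensional way, and that the girth of a graphic matroid is computable by a shortest-path (or shortest-cycle) computation. Write $A = B + P$ over $\mathbb F_2$, with $B$ the incidence matrix of a graph $G$ and $\rank(P)\le t$. First I would put $P$ in a normal form: after row operations on $A$ (which do not change $M(A)$) we may assume $P$ has at most $t$ nonzero rows, say indexed by a set $R$ with $|R|\le t$. These row operations replace $B$ by another generator matrix $B'$ of $M(B)$; since $M(B)$ is graphic, $B'$ is still the signed/unsigned incidence matrix of a graph (graphic matroids are closed under this, and over $\mathbb F_2$ there is essentially one graph per connected graphic matroid up to the standard Whitney moves), so WLOG $B$ itself has the property that rows outside $R$ already agree with $A$. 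Thus $A$ and $B$ differ only in the $\le t$ rows of $R$.

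The key structural observation is then the following. A circuit of $M(A)$ is a minimal nonempty set $S$ of columns of $A$ whose $\mathbb F_2$-sum is zero. Split the condition into the coordinates outside $R$ and the coordinates inside $R$. Outside $R$, the columns of $A$ equal those of $B$, so $S$ must sum to zero there; inside $R$, there are only $\le t$ coordinates, so the columns of $A$ restricted to $R$ live in $\mathbb F_2^R$, a space of size at most $2^t$. The idea is to guess, for each of the $2^t$ possible vectors $v\in\mathbb F_2^R$, a minimum-size dependent set of columns of $B$ whose restriction to $R$ sums to $v$ and which is ``consistent'' in the sense that it is a genuine circuit of $M(A)$. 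Concretely, enumerate over all subsets $T$ of the columns meeting $R$ nontrivially with $|T|\le t+1$ (there are polynomially many, since only $\le t$ rows are involved and a minimal dependent set cannot reuse too many of them without already being dependent on $R$ alone) and, for each such partial selection, compute the shortest way to complete it to a circuit of $M(A)$ using columns that are zero on $R$ — those columns are exactly edges of a subgraph $G_0$ of $G$, and completing to a circuit becomes a shortest-path / minimum-cycle problem in $G_0$ with prescribed endpoints dictated by $T$. Taking the minimum over all polynomially many guesses $T$ gives the girth of $M(A)$.

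The step I expect to be the main obstacle is making precise and efficient the reduction ``a minimum circuit of $M(A)$ either lies entirely in the columns vanishing on $R$ (handled by a single shortest-cycle computation in $G$, hence polynomial) or meets the $R$-support in a bounded way that can be enumerated.'' One has to argue that a minimal circuit cannot contain many columns whose $R$-restriction is nonzero without those columns themselves forming (or nearly forming) a dependency, which would contradict minimality; a clean way is to observe that among any $t+1$ columns with nonzero $R$-restriction some nonempty sub-sum vanishes on $R$, and then use exchange/minimality to bound how many such columns a circuit of minimum size can use. Once that bound (depending only on $t$) is in hand, the enumeration is polynomial for fixed $t$, each completion is a weighted shortest-path computation in a graph derived from $G$ by splitting/identifying the endpoints in $R$, and correctness follows by checking that every circuit of $M(A)$ is produced by exactly one guess and that every guess produces a dependent (indeed circuit, after taking a minimal subset) set. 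The connectivity structure theorems are not literally needed here — this conjecture is a self-contained ``bounded-rank perturbation of graphic'' statement — but it is the graphic case that the general structure theory reduces to, which is why it is highlighted.
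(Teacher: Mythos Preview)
The statement you are attempting to prove is stated in the paper as a \emph{conjecture}; the paper offers no proof of it at all. There is therefore no ``paper's own proof'' to compare your proposal against. What remains is to assess your argument on its own merits, and here there is a genuine gap.

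Your first reduction step is incorrect. You perform row operations on $A$ so that $P$ has at most $t$ nonzero rows, and then assert that the transformed $B'$ ``is still the signed/unsigned incidence matrix of a graph'' because $M(B')=M(B)$ is graphic. This conflates two different things: the column \emph{matroid} being graphic, and the particular \emph{matrix} being an incidence (frame) matrix. Row operations preserve the former but not the latter. For a concrete counterexample over $\bF_2$, take the $3\times 3$ incidence matrix of $K_3$ and add row~$1$ to row~$2$: the middle column acquires three nonzero entries, so the result is not a frame matrix. Since the row operations are dictated by $P$ (you cannot choose them freely), you have no control over what $B'$ looks like, and the rest of your argument, which relies on the rows outside $R$ still encoding a graph, collapses.

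Even granting the reduction, the second step is not justified. You claim a minimum circuit of $M(A)$ can contain only boundedly many columns whose $R$-restriction is nonzero, arguing that any $t+1$ such columns admit a nonempty sub-sum vanishing on $R$. That linear-algebra fact is true, but it does not by itself bound the number of such columns in a \emph{minimum} circuit: a sub-sum vanishing on $R$ need not be (or contain) a circuit of $M(A)$, so there is no immediate minimality contradiction. Turning the completion step into a shortest-path problem also glosses over the fact that, once the $R$-rows are stripped away, the remaining matrix is the incidence matrix of $G$ with the vertices in $R$ removed as rows, which leaves the edges incident to $R$ as half-edges rather than producing a clean subgraph $G_0$; the prescribed-endpoint shortest-path formulation then has to track parity data at those stubs as well. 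None of this is obviously fatal, but none of it is handled, and the conjecture is, as far as the paper indicates, open.
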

Naturally Conjecture~\ref{girth2} gives rise to a similar conjecture
on perturbations of frame matroids.

The {\em cogirth} of a matroid is the size of its smallest cocircuit.
It follows from a seminal theorem of Mader \cite{mader} that a simple graph with sufficiently
large cogirth has a $K_n$-minor. Thomassen \cite{thomassen} proved the related result
that a cosimple graph with sufficiently large girth has a  $K_n$-minor. Combining these facts
and generalising we obtain the next conjecture.

\begin{conjecture}
\label{girth3}
Let $\mathbb F$ be a finite field and let $n$ be a positive integer.
Then there is an integer $c$ such that
each $\mathbb F$-representable matroid with no $M(K_n)$- or 
$M(K_n)^*$-minor has girth at most $c$.
\end{conjecture}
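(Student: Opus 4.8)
The plan is to argue by contrapositive, so that it suffices to produce an integer $k=k(\bF,n)$ with the property that every $\bF$-representable matroid of sufficiently large girth has a vertically $k$-connected minor that still has large girth (hence large rank). Applying Theorem~\ref{structure3} to such a minor yields an $M(K_n)$- or $M(K_n)^*$-minor of it, and hence of the original matroid, so a matroid with neither of those minors cannot have arbitrarily large girth, which gives the bound $c$. Note that Theorem~\ref{structure3} already settles the vertically $k$-connected case outright, with no mention of girth, so the finer Theorems~\ref{structure4}, \ref{structure5}, \ref{refinement1} and~\ref{refinement2} are not strictly needed for the main line: the entire task is the reduction to the highly connected case, and the girth hypothesis enters only in order to survive that reduction.

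For the reduction I would take a matroid $M$ of very large girth that is not vertically $k$-connected, fix a vertical separation $(A,B)$ of order below $k-1$ with $B$ non-spanning, and replace $M$ by a proper minor whose girth drops by only a bounded amount. The key point is that across a vertical separation of small order the local connectivity is small, so a shortest circuit meeting both sides must wind through a long, low-rank, ``series-like'' structure straddling the separation; one then contracts and deletes within $A$ (say) to shortcut that structure, losing only a bounded amount of girth, and only boundedly many such steps are needed to raise the connectivity to $k$. Controlling the cumulative girth loss, and ensuring that the process terminates at a minor that is simultaneously vertically $k$-connected and of still-large girth, is the technical heart of the matter.

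I expect the connectivity reduction to be the main obstacle, and a genuinely delicate one, because large girth does not by itself force high connectivity: long subdivisions of a fixed graph have arbitrarily large girth yet only bounded connectivity and no large clique minor. Thus the reduction cannot merely contract low-order separations away --- that can collapse the shortest circuit altogether --- and must instead use the girth hypothesis together with some control on small cocircuits, in the spirit of the cosimplicity hypothesis in Thomassen's theorem \cite{thomassen} and, dually, of the large-cogirth hypothesis in Mader's theorem \cite{mader}, so that low-order vertical separations are forced to carry the long series-like structure the argument needs. Identifying the right such condition, and showing that it is inherited by the highly connected minor one extracts, is the crux; a safer incremental route would be to settle the represented-frame case first, by extending the known graph-theoretic results on the clique minors forced by large girth, and then to lift it to arbitrary minor-closed classes through Theorems~\ref{refinement1} and~\ref{refinement2}.
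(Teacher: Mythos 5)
This statement is one of the paper's \emph{conjectures}; the paper contains no proof of it, so there is no argument of the authors to compare yours against. What I can assess is whether your plan could plausibly be made to work, and here there is a genuine problem that goes beyond a ``delicate technical step.''

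You correctly identify Theorem~\ref{structure3} as the natural tool and correctly isolate the reduction to vertical $k$-connectivity as the crux. But your own closing observation --- that long subdivisions have arbitrarily large girth, bounded connectivity, and no large clique minor --- is not merely a warning sign for the reduction; pushed slightly further it shows that no reduction of the kind you describe can exist, because the conjecture \emph{as literally stated} has counterexamples. Take $M=U_{m-1,m}$, the $m$-element circuit. It is $\mathbb F$-representable for every field $\mathbb F$ (generated by $[\,I_{m-1}\mid \mathbf 1\,]$), its girth is $m$, and every minor of a uniform matroid is uniform, while $M(K_n)$ and $M(K_n)^*$ are non-uniform for all $n\ge 4$. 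So for $n\ge 4$ the circuit $U_{m-1,m}$ has no $M(K_n)$- or $M(K_n)^*$-minor and yet has unbounded girth. This is exactly why the graph theorem of Thomassen that the paper quotes requires a \emph{cosimple} hypothesis, and why the companion Conjecture~\ref{girth4} includes ``cosimple'' explicitly; Conjecture~\ref{girth3} should be read with the same implicit hypothesis (or ``simple and cosimple''), and any proof must use it essentially.

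Concretely, your proposed move --- ``contract and delete within $A$ to shortcut the long series-like structure, losing only a bounded amount of girth'' --- is precisely what cannot be made to work without such a hypothesis: shortcutting a circuit just produces a smaller circuit, and the process never reaches a vertically $k$-connected minor with comparable girth, because there is none to reach. So before the ``technical heart'' can even be attempted, the statement needs an additional degeneracy hypothesis and the reduction needs to exploit it, e.g.\ by showing that a cosimple matroid of very large girth has a cosimple, vertically $k$-connected minor of girth still growing with $m$. That is an interesting and plausibly true lemma, but it is not supplied here, and the proposal as written does not close the gap --- it names it.
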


The maximum girth of a simple $n$-vertex graph with no series pairs grows logarithmically in
$n$. On the other hand, the girth of $M(K_n)^*$ is $n-1$ and
its rank is only $\frac{(n-1)(n-2)}{2}$.
We expect that excluding $M(K_n)^*$ will bring us back
to the behaviour we see in graphs.

\begin{conjecture}
\label{girth4}
Let $\mathbb F$ be a finite field and let $n$ be a positive integer.
Then there is an integer $c$ such that
each cosimple
$\mathbb F$-representable matroid with no $M(K_n)^*$-minor, has girth
at most $c\cdot \log(r(M))$.
\end{conjecture}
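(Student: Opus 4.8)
The plan is to combine the frame-template structure theorem (Theorem~\ref{refinement2}) with a separate analysis of the low-connectivity case, reducing everything to a bound on the girth of highly connected perturbations of frame matroids. First I would set up the reduction to high vertical connectivity. Given the minor-closed class $\mathcal M$ of $\bF$-representable matroids with no $M(K_n)^*$-minor, fix the constant $k$ supplied by Theorem~\ref{refinement2} (applied with some $m$ chosen so that $\PG(m-1,\bFp)$ does not embed in any frame matroid of bounded complexity—in fact $m=3$ suffices since $\PG(2,\bFp)$ is not a perturbation-bounded frame matroid of the relevant type, but one should check the exact threshold). If $M$ is cosimple with no $M(K_n)^*$-minor and is \emph{not} vertically $k$-connected, then $M$ has a vertical separation of low order, and a shortest circuit either lives essentially on one side of the separation or crosses it in a controlled way; an inductive argument on $|M|$ then shows the girth is bounded by the maximum of the two recursively obtained bounds plus $O(k)$, which is still $O(\log r(M))$ since the ranks of the two sides add. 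The base case is when $M$ is vertically $k$-connected.

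Next, for the vertically $k$-connected case, I would invoke Theorem~\ref{refinement2}: either $M$ (after simplification, noting that simplifying a cosimple matroid does not destroy cosimplicity in the way that matters for cogirth but does affect girth, so care is needed) conforms to one of finitely many frame templates $\Phi_i$, or $M^*$ conforms to one of finitely many templates $\Psi_j$. The second alternative would give $M$ an $M(K_n)^*$-minor once $k$ is large enough relative to the $\Psi_j$ (this is essentially the content of Theorem~\ref{structure3} combined with the fact that cographic-side templates of large enough connectivity yield cographic minors of large cliques), so we may assume $M$ conforms to some frame template $\Phi=(\Gamma,C,D,X,Y_0,Y_1,A_1,\Delta,\Lambda)$. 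Thus $M$ is, up to a bounded-rank perturbation and bounded-size deletions/contractions encoded by $C,D,X,Y_0,Y_1$, a $\Gamma$-frame matroid, i.e. a (signed) graphic matroid over a bounded-size group. The key geometric input is then the classical fact cited in the paper: a simple graph with no short cycles has few edges, so a $\Gamma$-graphic matroid of rank $r$ with girth exceeding $c\log r$ would have too few elements to be cosimple after we account for the bounded number of ``exceptional'' elements; more precisely, one shows that if the underlying graph has girth $g$ then it has at most $O(r^{1+1/\lfloor (g-1)/2\rfloor})$ edges, and cosimplicity forces roughly $\Omega(r)$ edges minimum, giving $g = O(\log r)$. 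The bounded-rank perturbation changes girth by at most a bounded amount (a rank-$t$ perturbation changes circuit sizes by at most $2t$ via Lemma~\ref{dist-pert} and an elementary-lift/projection circuit-tracking argument), and the bounded-size sets $C,D,X,Y_0,Y_1,Z$ contribute only a bounded additive term.

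The main obstacle will be the bookkeeping in the last paragraph: controlling how the template apparatus—the columns from $\Lambda$, the rows from $\Delta$, the $Z$-columns summed with $Y_1$-columns, and especially the contraction of $C$ and deletion of $(B-X)\cup Y_1$ in the definition of conforming—interacts with girth. In particular one must verify that contracting $C$ and the perturbation together cannot ``conspire'' to keep the matroid cosimple while making a very long shortest circuit; this requires showing that cosimplicity of $M$ forces a genuine linear-in-$r$ lower bound on the number of elements of the underlying frame part, not merely of $M$ itself. A secondary technical point is that simplifying $M$ (needed to apply Theorem~\ref{refinement2}) can shrink the girth—but since we only need an \emph{upper} bound on girth and simplification never increases it, and since $r(M)$ is unchanged, this direction is harmless; the genuine issue is only that the template output describes the simplification, and one must re-inflate by parallel classes while tracking that each parallel class of size $\ge 2$ would itself create a $2$-element cocircuit, contradicting cosimplicity—so in fact $M$ is already simple, which streamlines matters considerably. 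With these points resolved, the constant $c$ can be taken uniform over the class, depending only on $\bF$ and $n$.
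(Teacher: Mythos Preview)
This statement is posed in the paper as an open conjecture; the paper offers no proof, so there is nothing against which to compare your proposal.

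As an attempted proof, the outline has several real gaps. The reduction to high vertical connectivity is not sound as written: a vertical separation $(X,Y)$ of low order need not yield cosimple minors on either side, the ranks satisfy $r(X)+r(Y)=r(M)+\lambda$ rather than summing to $r(M)$, and there is no evident induction on $|M|$ that produces an $O(\log r(M))$ bound from the pieces. Your dismissal of the dual-template branch of Theorem~\ref{refinement2} is also incorrect: excluding $M(K_n)^*$ does not exclude $M(K_n)$, so Theorem~\ref{structure3} cannot rule out the case where $M^*$ conforms to some $\Psi_j$. That case can in fact be handled---if $M$ is a bounded perturbation of a dual frame matroid and has no $M(K_n)^*$-minor, the underlying graph has no $K_n$-minor, hence bounded average degree by Mader's theorem, hence bounded minimum cut, hence the girth of $M$ is bounded by a constant---but this is not the argument you give. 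Separately, your choice of $m$ is garbled: the correct reason Theorem~\ref{refinement2} applies is that $\PG(m-1,\bFp)$ contains $M(K_n)^*$ as a restriction once $m\ge\binom{n-1}{2}$, so the class already excludes large projective geometries.

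Two further points. Your claim that a rank-$t$ perturbation changes girth by at most $2t$ is false: a single elementary lift can double the girth (take two disjoint triangles and lift to a $6$-circuit), so $2t$ lift/projection steps give only a multiplicative $2^{2t}$ bound. That would still suffice for an $O(\log r)$ conclusion, but it is not what you assert, and you must also check that enough short circuits survive at each step for circuit elimination to apply. Finally, you correctly flag---but do not resolve---the crux: transferring cosimplicity of $M$ to a minimum-degree-$3$ condition on the underlying $\Gamma$-frame graph so that the Moore-type bound yields $g=O(\log r)$. Since the conjecture is open, it is no surprise that this step is missing, but it is the heart of the matter.
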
 
 
\section{Conjectures in Coding Theory}

Interest in the problem of finding the girth of a binary matroid was primarily due to 
the connection with coding theory.
In this section we consider some other problems that connect
coding theory with matroids.
The connection is not particularly surprising since
a {\em linear code} over a finite field $\bF$ and 
an $\bF$-represented matroid  $M=(E,U)$ are one and the same thing;
the elements of $U$ are the {\em code words}.
There is even some common terminology,
the {\em dual code} of $M$ is $M^*$.

However, there are also a number of distinctions in terminology, for example,
coding theorists typically require that their generator matrices 
have linearly independent rows.
Also, in coding theory, the terms {\em puncturing}
and {\em shortening} are used in place of deletion and contraction.
The {\em distance} (or {\em Hamming distance}) of the
code $M$, which we will denote by $d(M)$, is the cogirth of the matroid $M$;
the {\em length} is $|M|$; and the {\em dimension} is $r(M)$.
A more uncomfortable difference in terminology is the
switch between graphic and cographic;
the code $M$ is {\em graphic} if the matroid $M$ is cographic and
the code $M$ is {\em cographic} if the matroid $M^*$ is graphic.

\subsection*{Asymptotically good families of codes}
Two competing measures of the quality of a code $M$ are its
{\em rate}, which is defined
as $r(M)/|M|$, and its {\em relative distance}, which is defined as $d(M)/|M|$.
The rate measures efficiency whereas the relative distance is a coarse
measurement of the tolerance of the code to errors.
A family $\mathcal C$ of codes is 
{\em asymptotically good} if there exists a real number $\epsilon>0$ and an infinite 
sequence $C_1,C_2,\ldots$ of codes in $\mathcal C$ with increasing dimension such that
$$ \frac{r(C_i)}{|C_i|}\ge \epsilon \mbox{ and } \frac{d(C_i)}{|C_i|} \ge \epsilon$$
for each $i$.
It is straightforward to prove that the class of binary linear codes
is asymptotically good (see~\cite[Chapter 9]{vanlint}).
On the other hand, Kashyap \cite{navin1}
proved that the class of graphic codes is not asymptotically good.
Later, in an unpublished note \cite{navin2}, 
he extended this to the class of {\em regular codes}, these are the binary linear codes 
that correspond to regular matroids.
We expect the behaviour of regular codes to be typical
for proper minor-closed classes of binary linear codes.

\begin{conjecture}
\label{code1}
No proper minor-closed class of binary linear codes is asymptotically good.
\end{conjecture}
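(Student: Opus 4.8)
The plan is to deduce the conjecture from the refined structure theorems, Theorem~\ref{refinement1} and Theorem~\ref{refinement2}, by showing that each of the "basic" structured classes fails to be asymptotically good, and that this failure is preserved under bounded-rank perturbation and under restriction to a minor-closed class. Since we are over $\bF_2$, being confined to a proper subfield is vacuous (there is no proper subfield), so Theorem~\ref{refinement1} contributes nothing new and the work is entirely with frame templates. First I would recall that for a minor-closed class $\cM$ of binary codes, the class is proper iff $\cM$ does not contain all binary matroids, which by the growth-rate theorem forces a bound $m$ with no member having a $\PG(m-1,\bF_2)$-minor after we restrict to highly connected members; then Theorem~\ref{refinement2} applies and tells us that every simple vertically $k$-connected member $M$ of $\cM$, or its dual, conforms to one of finitely many frame templates $\Phi_1,\dots,\Phi_s,\Psi_1,\dots,\Psi_t$.

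Next I would reduce asymptotic goodness to the highly connected case. Suppose for contradiction that $\cM$ contains an asymptotically good family $C_1,C_2,\dots$ with parameter $\epsilon$. A standard argument (splitting a code along a low-order separation and keeping the larger, still-good side; cf.\ the decomposition into $3$-connected pieces used by Kashyap) shows that we may assume each $C_i$ is vertically $k$-connected for any fixed $k$, at the cost of shrinking $\epsilon$ by a constant factor: a vertical $j$-separation with $j<k$ either exhibits a short cocircuit (killing the relative distance) or lets us pass to a proper minor on a constant fraction of the ground set with comparable rate and distance. Simplifying the code changes nothing relevant since parallel classes in a binary matroid have size at most two and can only help. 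So it suffices to show: for each frame template $\Phi$, the class $\cM(\Phi)$ of binary matroids, together with the duals of matroids conforming to a template, contains no asymptotically good family.

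The heart of the matter — and the step I expect to be the main obstacle — is the frame-matroid core. Unpacking the definition of a binary frame template, a matroid $M$ conforming to $\Phi$ is a bounded-rank perturbation (coming from the template parameters $\Gamma,C,D,X,Y_0,Y_1,\Delta,\Lambda$, all of bounded size except for the additive subgroups, which however contribute only boundedly to the rank by the closure conditions) of a graphic matroid — over $\bF_2$ a $\Gamma$-frame matrix with $\Gamma$ trivial is precisely the incidence structure of a graph, possibly with a small number of extra "special" rows and columns. Thus the problem reduces to Kashyap's theorem that regular (in particular graphic) codes are not asymptotically good, together with a perturbation-stability lemma: if $N$ is a rank-$(\le t)$ perturbation of $M$, then $d(N)\ge d(M)-f(t)$ is \emph{false} in general, so instead I would argue via duality that a rank-$t$ perturbation changes the cogirth of a cosimple matroid by a bounded amount only after deleting a bounded number of elements, and then invoke that deleting a bounded number of elements from a code in an asymptotically good family leaves an asymptotically good family. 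Concretely: if $\cM(\Phi)$ were asymptotically good, then pulling the witnessing family back through the template (undoing the bounded-size contractions $\con C$ and deletions $\del((B-X)\cup Y_1)$, which change length, rank, and distance by $O(1)$) and removing the $O(1)$ perturbation elements yields an asymptotically good family of graphic codes, contradicting Kashyap~\cite{navin1}; the dual templates $\Psi_j$ are handled identically using that cographic codes (duals of graphic matroids' codes) — wait, here one must be careful, as cographic \emph{matroids} can have asymptotically good codes? No: the code of a cographic matroid $M^*$ with $M$ graphic has distance equal to the cogirth of $M^*$, i.e.\ the girth of $M$, which grows only logarithmically, so such families have relative distance tending to zero. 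Assembling these pieces over the finitely many templates and taking the minimum of the resulting $\epsilon$-thresholds gives the contradiction and proves the conjecture.
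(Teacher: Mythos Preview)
The paper does not prove this statement; it is explicitly stated as a conjecture and left open. The authors write only that ``the conjectures in this section should all be approachable by using Theorem~\ref{structure1}'', so there is no proof in the paper to compare your proposal against. Your outline follows exactly the route the authors suggest, and the overall architecture---reduce to high vertical connectivity, apply the frame-template description over $\bF_2$, and then transfer Kashyap's result through a bounded perturbation---is the natural one.

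That said, your sketch has real gaps that are not merely cosmetic. First, the reduction to vertically $k$-connected codes is asserted rather than proved: it is not clear that a low-order vertical separation always lets you pass to a minor on a constant fraction of the elements while preserving both rate and relative distance up to a fixed factor; you would need an explicit lemma here (Kashyap's decomposition for regular matroids handles $3$-connectivity, not arbitrary $k$). Second, and more seriously, the perturbation step is exactly where the difficulty lies. You correctly note that a rank-$t$ perturbation can change the cogirth drastically, and your proposed workaround---undo the bounded contractions/deletions and ``remove the $O(1)$ perturbation elements''---does not work as stated: the perturbation is a rank-$t$ modification of the generator matrix, not the deletion of $t$ elements, so there are in general no $O(1)$ elements whose removal yields a graphic matroid. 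One needs a genuine argument that a bounded-rank perturbation of a graphic (respectively cographic) binary matroid cannot be asymptotically good, and this is precisely the open content of the conjecture. Finally, your treatment of the dual templates conflates the two directions: when $M^*$ conforms to $\Psi_j$, the matroid $M$ is a bounded perturbation of a \emph{cographic} matroid, and the relevant input is Kashyap's theorem on graphic codes; when $M$ itself conforms to $\Phi_i$, the matroid $M$ is near-graphic and the relevant (much easier) input is the lemma in the paper showing $\theta_{\mathcal G^*}=0$.
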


We also expect the behaviour of linear codes over other fields to be similar.

\begin{conjecture}
\label{code2}
Let $\mathcal C$ be a minor-closed class of linear codes over a finite field $\mathbb F$.
If $\mathcal C$ does not contain
all linear codes over $\bFp$, then $\mathcal C$ is not asymptotically good.
\end{conjecture}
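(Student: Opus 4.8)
The plan is to argue by contradiction using the structure theorems. Suppose $\cC$ omits some linear code over $\bFp$ yet is asymptotically good, witnessed by $\epsilon>0$ and codes $C_1,C_2,\ldots\in\cC$ with $r(C_i)\to\infty$, $r(C_i)/|C_i|\ge\epsilon$, and $d(C_i)/|C_i|\ge\epsilon$. First I would reduce to a class that excludes a prime-field projective geometry: if $N_0\notin\cC$ is $\bFp$-representable, then $N_0$ is a minor of $\PG(m-1,\bFp)$ for some $m$, and since $\cC$ is minor-closed no member of $\cC$ has a $\PG(m-1,\bFp)$-minor. This is exactly the hypothesis under which the structure theorems deliver frame structure; in particular the subfield outcome cannot be the end of the analysis, since (by Theorem~\ref{structure5}) it is triggered only by a high-rank prime-field geometry minor.

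Next I would reduce to highly connected witnesses. Writing each $C_i$ as a direct sum of connected matroids, $r(C_i)/|C_i|$ is a weighted average of the rates of the summands, so some summand $C_i'$ has rate at least $\epsilon$; moreover the cogirth of a direct sum is the minimum of the cogirths of its summands, so $d(C_i')\ge d(C_i)\ge\epsilon|C_i|\ge\epsilon|C_i'|$. If $r(C_i')$ stayed bounded along a subsequence then $|C_i'|\le r(C_i')/\epsilon$ would be bounded while $d(C_i')\ge\epsilon|C_i|\to\infty$, which is absurd; hence $r(C_i')\to\infty$ and we may assume each $C_i$ is connected with $r(C_i)\to\infty$. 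The harder step is to further arrange that each $C_i$ is vertically $k$-connected for the constant $k$ demanded by the structure theorems: whenever $C_i$ has a vertical separation $(X,Y)$ of order at most $k-2$ with neither side spanning, one passes to the minor supported on the side of larger rank (which, since $r(X)+r(Y)=r(C_i)+\lambda$, retains at least half the rank after contracting a basis of the ``guts''), and one must verify that this does not drop the distance below a fixed multiple of $\epsilon|C_i|$ and that the simplification needed to invoke the refined theorems is harmless. \emph{This propagation of the distance lower bound through minor operations is, I expect, the main obstacle}, since contraction can in principle destroy every large cocircuit; handling it will presumably use that the distance condition already forces $C_i^*$ to be simple and $C_i$ to have no wildly uneven parallel classes.

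With $C_i$ simple, vertically $k$-connected, and without a $\PG(m-1,\bFp)$-minor, I would apply Theorem~\ref{structure1} recursively to peel off the subfield outcome: each application replaces the field by a proper subfield while preserving, up to bounded shifts (using Lemma~\ref{dist-pert} to move between perturbations and lift/projection distance), both the vertical connectivity and the absence of a bounded-rank-shifted prime-field geometry minor, so after at most $\log_2|\bF|$ steps one obtains a constant $t=t(\cC)$ such that either $C_i$ or $C_i^*$ is a rank-$(\le t)$ perturbation of an $\bF$-represented frame matroid $N$; dualising, the other of $C_i,C_i^*$ is then a rank-$(\le 2t)$ perturbation of $N$ or $N^*$ accordingly.

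Finally I would derive the contradiction in the two cases. \textbf{Case 1: $C_i=N+P$ with $N$ a frame matroid and $\rank(P)\le t$.} Then $r(N)\ge r(C_i)-t\ge\tfrac12\epsilon|C_i|$ for $i$ large, so $N$ is a frame matroid with $|N|=|C_i|\le\tfrac2\epsilon\,r(N)$; in the biased-graph model the underlying graph has average degree $O(1/\epsilon)$, hence at least $2t+1$ vertices of degree $O(1/\epsilon)$, each of which yields a cocircuit of $N$ of size $O(1/\epsilon)$, and $2t+1$ of these can be taken linearly independent in the cocircuit space of $N$. Passing to $C_i^*$, a rank-$(\le 2t)$ perturbation of $N^*$, the space of $\bF$-linear combinations of the corresponding circuit vectors of $N^*$ meets the kernel of the rank-$(\le 2t)$ perturbing matrix in dimension at least one; a nonzero such combination has support of size $O(t/\epsilon)$ and is dependent in $C_i^*$, hence contains a circuit of $C_i^*$, i.e.\ a cocircuit of $C_i$ of size $O_{\cC,\epsilon}(1)$. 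This contradicts $d(C_i)\ge\epsilon|C_i|\to\infty$. \textbf{Case 2: $C_i^*=N+P$ with $N$ a frame matroid and $\rank(P)\le t$.} Here $N$ has $|N|=|C_i|$ elements and rank at most $(1-\epsilon)|C_i|+t$, so in the biased-graph model the underlying graph has $|C_i|$ edges and at most $(1-\tfrac12\epsilon)|C_i|$ vertices, and the Moore-type bound on the number of edges of a graph of given girth forces its girth, hence $\operatorname{girth}(N)$, to be $O_\epsilon(\log|C_i|)$. Repeatedly taking a shortest circuit of the current biased subgraph and deleting one of its edges, $t+1$ times, keeps the graph dense enough to preserve this girth bound and produces $t+1$ linearly independent circuits of $N$ of size $O_\epsilon(\log|C_i|)$; as in Case~1 some combination of them of size $O_\epsilon(t\log|C_i|)$ contains a circuit of $C_i^*$, so $d(C_i)=\operatorname{girth}(C_i^*)=O_{\cC,\epsilon}(\log|C_i|)$, again contradicting $d(C_i)\ge\epsilon|C_i|$. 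The remaining technical work is the connectivity reduction above and making the ``sparse frame matroids have small cogirth'' and ``frame matroids of large cycle rank have small girth'' estimates precise through the theory of biased graphs, extending Kashyap's treatment of the graphic and regular cases.
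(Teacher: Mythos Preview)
The statement you are attacking is labelled a \emph{conjecture} in the paper and is presented without proof; the authors only remark that ``the conjectures in this section should all be approachable by using Theorem~\ref{structure1}''. There is therefore no proof in the paper against which to compare your attempt.

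Your outline is exactly the programme the authors have in mind: exclude a prime-field projective geometry, pass to highly connected witnesses, invoke the structure theorem to land in a bounded-rank perturbation of a frame or dual-frame matroid, and then show that neither case can be asymptotically good by extending Kashyap's arguments. Two points deserve flagging. First, you yourself identify the main gap: propagating the relative-distance lower bound through the reduction to vertically $k$-connected instances is not routine, since contraction can collapse large cocircuits, and nothing you have written controls this. Second, your ``recursive'' elimination of the subfield outcome via Theorem~\ref{structure5} is not quite how the theorems compose: each application of Theorem~\ref{structure5} outputs a perturbation of a matroid confined to a proper subfield, but to iterate you would need to re-establish high vertical connectivity for \emph{that} matroid, and a bounded-rank perturbation can destroy vertical $k$-connectivity. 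The cleaner route is the one the paper lays out in Theorems~\ref{structure3}--\ref{structure5}: the hypothesis already excludes $\PG(m-1,\bFp)$, so Theorems~\ref{structure3} and~\ref{structure4} together give the frame/dual-frame conclusion directly for $M$ or $M^*$, with no iteration needed. With that correction your Cases~1 and~2 are a plausible finish---the linear-algebra step (that among $t+1$ independent low-weight vectors in the row space of $B$ some nonzero $\bF$-combination lies in the row space of $A$ when $\rank(A-B)\le t$) is correct---but what you have is a programme for a proof of an open conjecture, not a proof, and the paper does not claim more.
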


\subsection*{Threshold functions}
While graphic codes are not asymptotically good, they are certainly better than cographic
codes. What follows is a means of making such comparisons possible.
We will restrict our discussion here to binary linear codes.

For each class $\cC$ of binary linear codes there exists a
function $\theta_{\cC}(R):(0,1)\rightarrow [0,1]$ such that,
for any real number $R\in (0,1)$, the following hold:
\begin{itemize}
\item[(i)] if the bit-error-probability $p$ of the channel is smaller
than $\theta_{\cC}(R)$,
then for each $\epsilon>0$ there exists
a code of rate greater than $R$ in $\mathcal C$
for which the probability of error
using maximum-likelihood decoding is less than $\epsilon$,
and
\item[(ii)] 
if the bit-error-probability $p$ of the channel is greater than $\theta_{\cC}(R)$,
then there exists an $\epsilon>0$,
such that for all codes of rate less than $R$ in $\mathcal C$
the probability of error using 
maximum-likelihood decoding is greater than $\epsilon$.
\end{itemize}
In other words, for any $p<\theta_{\cC}(R)$, the probability of error goes to $0$ for 
appropriately chosen codes from $\mathcal C$ of rate $R$ 
with increasing dimension, while if $p>\theta_\cC(R)$, the probability of error is bounded
away from $0$.

The function $\theta_{\cC}$ is called the {\em threshold function}
(or {\em ML threshold function}) for $\cC$.  The threshold function
is known explicitly for the class of binary linear codes and for the
class of graphic codes.  For the class $\cB$ of binary linear codes,
it follows from Shannon's Theorem that
$$ \theta_{\cB}(R) = f^{-1}(R), $$
where $f:(0,\frac 1 2)\rightarrow [0,1]$ is the invertible function defined by
$$ f(p) = 1 + p\log_2(p) + (1-p)\log_2(1-p). $$
For the class $\cG$ of graphic codes, Decreusefond and Zemor \cite{dec}
proved\footnote{Note added in print:
$\theta_{\cG}(R)=\frac{1}{2}(1 - \sqrt R)^2(1+R)^{-1}$ should be read as a conjecture.
As pointed out by Peter Nelson and Stefan van Zwam (personal communication),
\cite{dec} only considers classes of regular graphs.}
that
$$ \theta_{\cG}(R) = \frac{(1 - \sqrt R)^2}{2(1+R)}.$$

While graphic codes are not as good as arbitrary linear codes, they are still reasonable,
perhaps surprisingly so. On the other hand, it is easy to show that 
the class of cographic codes is very poor.
\begin{lemma}
For the class $\mathcal G^*$ of all cographic codes, $\theta_{{\mathcal G}^*}(R)=0$ for all
$R\in(0,1)$.
\end{lemma}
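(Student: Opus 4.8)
### Proof Proposal

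The plan is to exploit the fact that cographic codes are the duals of graphic codes, so the cographic code $M$ has $M^*$ graphic, meaning $M^*$ is the cycle matroid of some graph $G$, and hence $M = M(G)^*$ is the bond (cocycle) matroid of $G$. The distance $d(M)$ is the cogirth of $M$, which equals the girth of $M^* = M(G)$, i.e.\ the length of a shortest cycle in $G$. Meanwhile the rate is $r(M)/|M| = (|M| - r(M^*))/|M| = (|E(G)| - (|V(G)| - c(G)))/|E(G)|$, where $c(G)$ is the number of components of $G$. The essential point is that to make the rate bounded below by a fixed $R > 0$ we need the number of edges of $G$ to exceed a fixed multiple of the number of vertices, but a graph whose edge count grows linearly in its vertex count must contain short cycles — its girth is bounded by a constant depending only on the edge-to-vertex ratio.

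First I would reduce to a clean statement about graphs: given any $R \in (0,1)$ and any $\epsilon > 0$, I claim there is an $\epsilon' > 0$ such that for every cographic code $M$ of rate at least $R$, the relative distance $d(M)/|M|$ is at most some constant, and more importantly the probability of error under maximum-likelihood decoding over a binary symmetric channel with any bit-error probability $p > 0$ stays bounded away from zero. The key structural input is the classical Moore-type bound: a simple graph on $v$ vertices with girth at least $g$ has at most roughly $v^{1 + 1/(\lfloor (g-1)/2\rfloor)}$ edges, so if $|E(G)| \ge \alpha |V(G)|$ for a fixed $\alpha > 1$ then the girth of $G$ is bounded by a constant $g_0 = g_0(\alpha)$. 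Applying this with $\alpha$ chosen so that rate at least $R$ forces $|E(G)|/|V(G)| \ge \alpha$ (taking care of the $c(G)$ and parallel-edge bookkeeping, which only helps), we conclude that every cographic code of rate at least $R$ has distance at most a constant $d_0 = d_0(R)$.

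Next I would translate bounded distance into a positive error floor. A code of distance $d_0$ has minimum-weight codewords; over a binary symmetric channel with crossover probability $p$, maximum-likelihood decoding confuses two codewords at Hamming distance $d_0$ with probability at least $p^{d_0}$ (roughly, the probability that the $d_0$ coordinates in which they differ all flip in the appropriate way), independently of the length and dimension of the code. Hence the error probability is at least some fixed $\epsilon_0 = \epsilon_0(p, d_0) > 0$ for every cographic code of rate at least $R$. Since this holds for arbitrarily small $p > 0$, condition (i) in the definition of the threshold function fails for every $R \in (0,1)$ and every $p > 0$, which is exactly the assertion that $\theta_{\mathcal{G}^*}(R) = 0$.

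The main obstacle is the careful handling of the combinatorial bookkeeping that relates the parameters of the code $M$ to those of the underlying graph $G$: one must rule out the degenerate ways a cographic code could have high rate without forcing $G$ to be edge-dense (loops and parallel edges in $G$, isolated vertices, disconnected components), and one must confirm that the Moore-bound argument survives after passing to the simplification of $G$. None of this is deep, but it requires enough attention that I would isolate it as a lemma: \emph{if $G$ is a graph with $r(M(G)^*)/|E(G)| \ge R$ then the girth of $G$ is at most $d_0(R)$}. Everything else — the duality identification and the error-floor estimate — is routine.
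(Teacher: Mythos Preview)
Your argument has a genuine gap, and it stems from a misidentification of what a cographic code is. You take ``$M$ is a cographic code'' to mean the matroid $M$ is cographic, i.e.\ $M=M(G)^*$, so that $d(M)$ is the girth of $G$. In the paper's convention (the terminology is swapped between codes and matroids), a cographic code is the dual of a graphic code; since a graphic code already has cographic \emph{matroid}, the cographic code $C$ has \emph{graphic} matroid: $C=M(G)$. Hence $d(C)$ is the cogirth of $M(G)$, i.e.\ the minimum edge-cut of $G$, not its girth. The paper's proof then runs: rate $\ge R$ forces $|E|$ to be bounded linearly in $|V|$, so the average degree is bounded, so the minimum degree---and therefore the minimum cut---is at most some constant $\delta=\delta(R)$; the error probability is then at least $p^{\delta}$.

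Even granting your identification, the Moore-bound step is wrong. From $|E(G)|\ge \alpha|V(G)|$ and the Bondy--Simonovits-type bound $|E|\le C\,|V|^{1+1/k}$ for girth $>2k$, you can only conclude $k\le \log|V|/\log(\alpha/C)$, i.e.\ the girth is $O(\log|V|)$, not bounded by a constant $g_0(\alpha)$. Indeed, random $d$-regular graphs (or Ramanujan graphs) have $|E|=\tfrac{d}{2}|V|$ and girth $\Theta(\log|V|)$, so no constant bound is possible. This is not a technicality: your argument, if it worked, would prove $\theta_{\mathcal G}=0$ for the class of \emph{graphic} codes, contradicting the Decreusefond--Z\'emor theorem quoted just before the lemma. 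The bounded-distance-implies-error-floor part of your outline is fine; it is the combinatorial input that fails.
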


\begin{proof}
Let $R\in (0,1)$ be a real number,
let $\delta = \frac{1}{2(1-R)}$, and let $\epsilon = p^\delta$.
Consider any cographic code $C$ with rate at least $R$.
Now $C$ is the cycle matroid of a connected graph $G=(V,E)$.
The fact that $C$ has rate at least $R$ can be expressed graphically as
$$ \frac{ |E| - |V| +1}{|E|}  \ge R,$$
which implies that
$$ |E| <\frac{|V|}{1-R}=\tfrac{1}{2}\delta |V|.$$
Therefore $G$ has a vertex of degree at most $\delta$, which means 
that $C$ has distance $\le \delta$.
Then the error-probability is at least $\epsilon$.
\end{proof}

This brings us to a striking conjecture.

\begin{conjecture}
\label{code3}
Let $\mathcal C$ be a proper minor-closed class of binary linear codes.
\begin{itemize}
\item[(i)] If $\mathcal C$ contains all graphic codes, then 
$\theta_{\mathcal C}=\theta_{\mathcal G}$.
\item[(ii)] If $\mathcal C$ does not contain all graphic codes, then 
$\theta_{\mathcal C}=0$.
\end{itemize}
\end{conjecture}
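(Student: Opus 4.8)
\emph{Proof proposal.} The plan is to obtain both parts from the structure of the highly connected members of $\mathcal C$, using Theorems~\ref{structure3}, \ref{structure4} and~\ref{refinement2}. Two simplifications are available in the binary case. First, since $\mathcal C$ is a \emph{proper} minor-closed class of binary matroids it excludes some binary matroid, hence some $\PG(m-1,\bFp)$; thus no member of $\mathcal C$ has a $\PG(m-1,\bFp)$-minor, so Theorem~\ref{refinement2} applies to every member of $\mathcal C$ for this $m$ (and the subfield alternative of Theorem~\ref{structure1} is vacuous over $\GF(2)$, since the only subfield is $\bF$ itself). Second, over $\GF(2)$ the multiplicative group is trivial, so an $\bF$-represented $\Gamma$-frame matroid is just a graphic matroid (possibly with loops), and a frame template over $\GF(2)$ describes matroids that are rank-$(\le t)$ perturbations of graphic matroids, for a $t$ depending only on the template. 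Hence Theorem~\ref{refinement2} yields constants $k,t$, depending only on $\mathcal C$, such that every simple vertically $k$-connected $M\in\mathcal C$ is a rank-$(\le t)$ perturbation of a graphic matroid, \emph{or} $M^*$ is such a perturbation (equivalently, $M$ is a rank-$(\le t)$ perturbation of a cographic matroid).

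The key step is a distance dichotomy. \textbf{Claim.} There is a constant $c=c(\mathcal C,R)$ such that every simple vertically $k$-connected $M\in\mathcal C$ with $r(M)\ge R|M|$ satisfies $d(M)\le c$, \emph{unless} $M$ is a rank-$(\le t)$ perturbation of a cographic matroid $M(H)^*$ with $H$ of average degree bounded below in terms of $R$. In the first alternative of the structure theorem, $M$ is a rank-$(\le t)$ perturbation of $M(G)$; since such a perturbation alters rank and size by $O(1)$ and $r(M)\ge R|M|$, the graph $G$ has average degree at most $3/R$ once $|M|$ is large, so $G$ has $\Omega(r(M))$ pairwise non-adjacent vertices of bounded degree and therefore $M(G)^*$ has $\Omega(r(M))$ pairwise disjoint circuits of bounded size. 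Two of these have the same image under the rank-$(\le t)$ perturbation of $M(G)^*$ obtained by dualising, so their union is dependent in $M^*$, giving a circuit of bounded size in $M^*$, i.e.\ a cocircuit of bounded size in $M$. In the second alternative, $M$ is a rank-$(\le t)$ perturbation of a cographic $M(H)^*$, and $r(M)\ge R|M|$ forces $H$ to have average degree bounded below. This proves the claim. Finally, a bounded-distance code is useless for threshold purposes: if $d(M)\le c$ then for every $p>0$ the maximum-likelihood error probability on the binary symmetric channel with crossover probability $p$ is at least $p^{c}$, so such an $M$ can never belong to a family whose error probability tends to $0$.

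Part~(ii) follows quickly. If $\mathcal C$ does not contain all graphic codes then, since the graphic codes are exactly the cographic matroids and every cographic matroid is a minor of some $M(K_n)^*$, the class $\mathcal C$ has no $M(K_n)^*$-minor for some $n$. In the second alternative of the claim, $M$ is a rank-$(\le t)$ perturbation of $M(H)^*$; by a cleaning argument $H$ has no $K_{n'}$-minor for $n'$ bounded in terms of $n$ and $t$, and by the density theory for $K_{n'}$-minor-free graphs (or the matroid growth-rate theorem) such an $H$, being of average degree bounded below, has girth bounded in terms of $n$ and $R$, so $M(H)^*$---and hence $M$---again has bounded distance. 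Thus \emph{every} simple vertically $k$-connected member of $\mathcal C$ of rate at least $R$ has bounded distance; decomposing an arbitrary member along low-order vertical separations (where rank and size are essentially additive) produces a vertically $k$-connected piece of rate $R-o(1)$ which is therefore useless, and one checks that a $k$-sum involving a useless code is useless, so $\mathcal C$ admits no family of codes of rate bounded away from $0$ whose error probability tends to $0$ at any positive noise level---that is, $\theta_{\mathcal C}\equiv 0$. For part~(i), the inclusion $\mathcal C\supseteq\mathcal G$ gives $\theta_{\mathcal C}\ge\theta_{\mathcal G}$; for the reverse inequality, take a family in $\mathcal C$ realising $\theta_{\mathcal C}(R)$, pass to vertically $k$-connected pieces of rate $R-o(1)$ (still error-vanishing), apply the claim to see that each such piece is a rank-$(\le t)$ perturbation of a cographic matroid $M(H_i)^*$ (the bounded-distance alternative being excluded since the family is good), and argue that deleting the rank-$(\le t)$ perturbation changes the threshold of the family by $o(1)$; the cographic codes $M(H_i)^*$ then form an error-vanishing family at the same noise level and rate $R-o(1)$, whence $\theta_{\mathcal C}(R)\le\theta_{\mathcal G}(R)$ on letting $i\to\infty$ and using continuity of $\theta_{\mathcal G}$.

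The main obstacle is the passage to highly connected instances together with the insensitivity of the threshold to bounded-rank perturbations: the threshold function is a global quantity, and it is not \emph{a priori} clear either that a good family can be replaced by an equally fast good family of vertically $k$-connected codes, or that replacing a matroid by a rank-$(\le t)$ perturbation of it leaves the threshold unchanged. Reducing Conjecture~\ref{code3} to these two ``robustness of the threshold'' statements is exactly what the structure theorems accomplish; proving those statements---which no longer mention structure theory---is where the real work lies. One should also keep in mind the caveat in the footnote to the Decreusefond--Zemor result: $\theta_{\mathcal G}$ is itself not currently known in closed form, so in part~(i) the conclusion $\theta_{\mathcal C}=\theta_{\mathcal G}$ is an equality between two functions neither of which has been rigorously determined.
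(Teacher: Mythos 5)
Conjecture~\ref{code3} is stated in the paper as an open conjecture, not a theorem; the paper supplies no proof (it only remarks that the coding-theory conjectures ``should all be approachable by using Theorem~\ref{structure1}; due to the nature of the conjectures, the refined structure theorems in Section~4 should not be required''). So there is no proof of the authors' to compare your proposal against. What follows is therefore an assessment of your sketch on its own terms.

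Your overall plan---use the binary structure theorem to reduce to bounded-rank perturbations of graphic and cographic matroids, prove a distance dichotomy, and use bounded distance to kill the threshold---is the natural line of attack, and the central pigeonhole trick (many pairwise disjoint small circuits of the base matroid have the same ``image'' under a rank-$(\le t)$ perturbation, so two of them merge into a bounded circuit of the perturbed matroid) is a correct and useful observation. You are also right, and commendably explicit, that the two genuinely hard steps are the reduction to vertically $k$-connected instances and the robustness of the threshold function under bounded-rank perturbations; nothing in the structure theorems addresses these, and as far as I can tell neither statement is available in the literature. One mild note: you invoke Theorem~\ref{refinement2}, whereas the authors expressly expect the unrefined Theorem~\ref{structure1} to suffice here; that is not an error, but it suggests you are carrying more machinery than needed.

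Beyond the gaps you flag, there is a further, unflagged gap in your treatment of part~(ii). You claim that a graph $H$ with no $K_{n'}$-minor and average degree bounded below has girth bounded in terms of $n'$ and $R$. That is not true in general: the density bound for $K_{n'}$-minor-free graphs caps the average degree above, and the Moore bound then gives girth only $O(\log|V(H)|)$, which is unbounded. To get bounded girth you need a Thomassen-type theorem (cited in the paper for Conjecture~\ref{girth3}), which requires minimum degree at least~$3$, hence a subgraph extracted from average degree greater than~$4$; your lower bound on the average degree of $H$ is $2/(1-R)$, which exceeds~$4$ only when $R>1/2$. For small rates your argument breaks down. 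And even granting bounded girth of $H$, you would need $\Omega(1)$ (indeed, more than $2^t$) pairwise \emph{disjoint} short cycles of $H$ to run the pigeonhole step and transfer bounded cogirth from $M(H)^*$ to its perturbation $M$; bounded girth guarantees one short cycle, not many disjoint ones, so this step also needs an argument. These two points should be added to your list of obstructions alongside the connectivity reduction and the threshold-robustness issue.
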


Let $\bF$ be a finite field of order $q$.
Threshold functions can be defined analogously for codes over $q$-ary symmetric channels,
we omit the obvious definitions.
We expect that something like Conjecture~\ref{code3} holds for
arbitrary finite fields; in particular, we conjecture that 
there  is a finite list $(\cC_1,\ldots,\cC_k)$ of ``special"
families of codes such that, if 
$\cC$ is a minor-closed class of linear codes over $\bF$,
then its threshold function will
be the maximum of the threshold functions of those families
among $\cC_1,\ldots,\cC_k$ that are contained in $\cC$.
The special families will include the classes of linear codes over the
subfields of $\bF$, as well as the classes
$\cD(\bF,\Gamma)^*$ where $\Gamma$ is a subgroup of $\bFmult$.
In fact, we believe that the 
aforementioned families define $(\cC_1,\ldots,\cC_k)$.
Each of these families contains the class of graphic codes, which gives rise 
to the following conjecture. 
\begin{conjecture}
\label{code4}
Let $\mathbb F$ be a finite field and let $\mathcal C$ be a proper minor-closed class of
linear codes over $\mathbb F$.
If $\mathcal C$ does not contain all graphic codes, then $\theta_{\mathcal C}=0$.
\end{conjecture}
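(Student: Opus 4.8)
The plan is to reduce the hypothesis to the exclusion of a single minor, invoke the structure theorems to pin down the highly connected codes in $\cC$, bound their distance, deduce that their probability of error is bounded away from $0$, and finally remove the connectivity hypothesis; that last step is the main obstacle.

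First I would translate the hypothesis. A graphic code is a code whose matroid $\widetilde M$ is cographic, and each cographic matroid $M(H)^*$ is a minor of $M(K_{|V(H)|})^*$; since $\cC$ is minor closed and does not contain all graphic codes, there is an integer $n$ with $M(K_n)^*\notin\cC$. As $M(K_n)^*$ is a simple $\bFp$-representable matroid it is a restriction of $\PG(m-1,\bFp)$ for all large $m$, so $\cC$ also excludes $\PG(m-1,\bFp)$ for some $m$. Now Theorem~\ref{structure3} gives a $k$ so that every vertically $k$-connected member of $\cC$ has an $M(K_N)$- or $M(K_N)^*$-minor, for any prescribed $N$; choosing $N\ge n$ rules out the second alternative, so such a code has an $M(K_N)$-minor and no $\PG(m-1,\bFp)$-minor, whence by Theorem~\ref{structure4} it is a rank-$(\le t)$ perturbation of an $\bF$-represented frame matroid $F$, for some $t$ depending only on $\cC$.

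Next, fix $R\in(0,1)$ and let $M\in\cC$ be vertically $k$-connected of rate at least $R$ with $r(M)$ large. Since $\widetilde M$ is a rank-$(\le t)$ perturbation of $\widetilde F$ we have $|F|=|M|$ and $|r(F)-r(M)|\le t$, so $F$ is sparse: $|F|\le(1+o(1))r(F)/R$. Taking a frame matrix for $F$ with $r(F)$ rows, its total number of nonzero entries is at most $2|F|$, so the columns meeting a fixed low-support row form a set of size at most $(2/R)(1+o(1))$ whose deletion lowers the rank, hence contains a cocircuit of $\widetilde F$ of size at most some constant $c_0=c_0(R)$; letting the supporting rows range over a set that is pairwise ``non-adjacent'' in the frame matrix produces $\Omega_R(r(F))$ pairwise disjoint cocircuits of $\widetilde F$ of size at most $c_0$. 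For $r(F)$ large, select $t+1$ disjoint such cocircuits $D_1,\dots,D_{t+1}$. Dually these are disjoint circuits of $\widetilde{F^*}$, so $\widetilde{F^*}$ restricted to $S:=D_1\cup\dots\cup D_{t+1}$ has rank at most $|S|-(t+1)$ by subadditivity of rank; since $\widetilde{M^*}$ is a rank-$(\le t)$ perturbation of $\widetilde{F^*}$ (perturbation distance being preserved by restriction and by duality), its restriction to $S$ has rank at most $|S|-1$ and is dependent, so the distance $d(M)=\mathrm{cogirth}(\widetilde M)=\mathrm{girth}(\widetilde{M^*})$ is at most $|S|\le(t+1)c_0$. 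Thus there is a constant $c=c(\cC,R)$ with $d(M)\le c$ for every sufficiently connected code in $\cC$ of rate at least $R$. A code of distance $d\le c$ then has maximum-likelihood error probability bounded below by a positive constant depending only on $c$, $p$ and $|\bF|$: transmitting one of two codewords at distance $d$, with probability at least $(p/(|\bF|-1))^{d}$ every coordinate in which they differ is corrupted into the value of the other codeword, in which case the received word is strictly closer to the wrong codeword; this probability is independent of the block length.

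The main obstacle is removing the connectivity hypothesis. A code in $\cC$ of rate at least $R$ that is not vertically $k$-connected has a vertical separation of bounded order, along which the matroid decomposes; one wants to show that a shortest cocircuit is essentially confined to one side, so that, via a tangle- or branch-width decomposition into highly connected torsos, the distance of an arbitrary member of $\cC$ is controlled by the bound already obtained together with a bounded crossing term. Making this rigorous is exactly where one would need the full structure theorem for minor-closed classes rather than its highly connected specialisation — the torsos are not literally minors of the original code — and it is the matroidal counterpart of combining Thomassen's theorem with Mader's theorem as in the discussion before Conjecture~\ref{girth3}. Granting this reduction, for every $R\in(0,1)$ and every $p>0$ the probability of maximum-likelihood error over the $|\bF|$-ary symmetric channel is bounded away from $0$ for all members of $\cC$ of rate at least $R$ and large dimension, which is the assertion $\theta_{\cC}=0$.
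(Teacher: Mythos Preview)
The statement you are attempting to prove is listed in the paper as a \emph{conjecture}, not a theorem; the paper gives no proof of it. The only guidance the authors offer is the remark at the end of the coding-theory section that ``the conjectures in this section should all be approachable by using Theorem~\ref{structure1}; due to the nature of the conjectures, the refined structure theorems in Section~4 should not be required.'' Your outline is in the spirit of that remark, but there is nothing in the paper to compare it against.

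On the substance of your outline: the reduction to the highly connected case is, as you yourself flag, the crux, and you do not actually carry it out. ``Granting this reduction'' is not a proof step; in particular, the torsos of a tree- or tangle-decomposition are not minors of the original code, so the bounded-distance conclusion for highly connected members of $\cC$ does not automatically transfer to them, and there is no off-the-shelf lemma in the paper that performs this transfer. Until that step is made rigorous the argument remains a heuristic.

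Even in the highly connected case there are some loose constants and unjustified claims. You assert that $\widetilde{M^*}$ is a rank-$(\le t)$ perturbation of $\widetilde{F^*}$; what is immediate from the definitions is only that ranks of arbitrary subsets differ by at most $t$ in the primal, hence by at most $2t$ in the dual, so you need $2t+1$ disjoint small cocircuits rather than $t+1$. You also need to argue that the frame matrix can be taken with linearly independent rows before concluding that each ``star'' contains a cocircuit, and you should justify more carefully that the low-degree rows contain a large set whose stars are pairwise disjoint (this follows because each column of a frame matrix meets at most two rows, so the row-adjacency graph has bounded degree). None of these is fatal, but as written the highly connected case is a sketch rather than a proof.
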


By using Shannon's Theorem, we can determine the threshold function
for the class of linear codes over any given subfield of $\bF$.
However, the following problem is open:
\begin{problem}
Let $\bF$ be a finite field and let $\Gamma$ be a
subgroup of $\bFmult$.
Determine the threshold function for $\cD(\bF,\Gamma)^*$.
\end{problem}

The conjectures in this section should all be approachable by using Theorem~\ref{structure1};
due to the nature of the conjectures, the refined structure theorems in Section 4
should not be required.

\section{Growth Rates}

The {\em growth rate} of a class $\mathcal M$ of matroids is the function $h_{\cM}$,
where $h_{\cM}(r)$
is the maximum number of elements in a simple rank-$r$ member of $\mathcal M$, if that
maximum exists, otherwise we say that the growth rate is infinite.

In this section we 
describe, via some results and some conjectures, what we believe to be
the fundamental mechanisms governing growth rates of minor-closed classes.
In Section~\ref{growthoverfinitefields} we pose more specific conjectures
about the growth rate of minor-closed classes of matroids over finite fields.

Kung \cite{kung} conjectured that growth rates of minor-closed classes of matroids 
are either linear, quadratic, exponential, or infinite.
Kung's conjecture was eventually proved as the culmination of 
results in a sequence of papers \cite{gk1,gkw1,gw2}.

\begin{theorem}
\label{growth-rates}
Let $\mathcal M$ be a minor-closed class of matroids. Then either
\begin{itemize}
\item[(i)] $h_{\cM}(r) = O(r)$,
\item[(ii)] $\mathcal M$ contains all graphic matroids and 
$h_{\mathcal M}(r)  = O(r^2)$, 
\item[(iii)] there is a finite field $\bF$ of order $q$
such that $\mathcal M$ contains all $\bF$-representable matroids and 
$h_{\mathcal M}(r) = O(q^r)$, or
\item[(iv)] $\mathcal M$ contains all simple rank-$2$ matroids.
\end{itemize}
\end{theorem}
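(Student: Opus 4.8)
The plan is to establish the four alternatives through a cascade of dichotomies, following \cite{gk1,gkw1,gw2}. The first move is to handle case (iv): if $U_{2,n}\in\mathcal M$ for every $n$ there is nothing to prove, so assume instead that there is an integer $\ell\ge 2$ such that no member of $\mathcal M$ has a $U_{2,\ell+2}$-minor. Kung's bound \cite{kung} then yields $h_{\mathcal M}(r)\le(\ell^r-1)/(\ell-1)$, so the growth rate is finite and at most exponential; the remaining work is to decide which of (i), (ii), (iii) holds and to sharpen this estimate.

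The pivotal dichotomy is whether $\mathcal M$ contains arbitrarily large projective geometries over some fixed prime power. Observe first that if $PG(n-1,q)\in\mathcal M$ for all $n$ then $q\le\ell$, so among the prime powers with this property there is a largest one, which we call $q$. Since every simple rank-$r$ matroid representable over $\mathrm{GF}(q)$ is a restriction of $PG(r-1,q)$, and since every $\mathrm{GF}(q)$-representable matroid is a minor of a simple one (parallel elements and loops being recoverable by contractions inside projective geometries), $\mathcal M$ then contains all $\mathrm{GF}(q)$-representable matroids; with the matching upper bound $h_{\mathcal M}(r)=O(q^r)$ this is case (iii). Note that replacing the crude bound $O(\ell^r)$ by the exact base $O(q^r)$ already requires the density estimates of \cite{gkw1,gw2}.

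It remains to treat the case in which $\mathcal M$ has no arbitrarily large projective geometry minors over any field. Here the Growth Rate Theorem of \cite{gkw1} supplies two facts: such a class satisfies $h_{\mathcal M}(r)=O(r^2)$; and if, moreover, $h_{\mathcal M}(r)$ is not $O(r)$, then $\mathcal M$ contains $M(K_n)$ for every $n$. In the latter situation, since every simple graphic matroid of rank $r$ is a restriction of $M(K_{r+1})$ and every graphic matroid is a minor of a simple one, $\mathcal M$ contains all graphic matroids, whence $h_{\mathcal M}(r)\ge\binom{r+1}{2}$; combined with the $O(r^2)$ upper bound this is case (ii). Otherwise $h_{\mathcal M}(r)=O(r)$ and we are in case (i). Collecting these dichotomies proves the theorem.

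The main obstacle is not the case analysis but the two density theorems of \cite{gkw1,gw2} that drive it: that a sufficiently dense minor-closed class must contain arbitrarily large projective geometries (respectively cliques), and that once these are excluded the quadratic (respectively linear) upper bound actually holds. The standard approach is to reduce the extremal density question to highly connected --- in particular vertically connected, or ``round'' --- members of the class, where the local structure around a large clique- or projective-geometry-minor is rigid enough to sustain a counting argument; pinning down the exact exponential base in case (iii), rather than merely $O(\ell^r)$, is the most delicate point and consumes the bulk of \cite{gw2}.
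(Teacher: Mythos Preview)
The paper does not give its own proof of this theorem; it simply states it as the culmination of \cite{gk1,gkw1,gw2} and moves on. Your sketch is therefore already more detailed than what the paper offers, and the overall cascade of dichotomies you describe---exclude a long line via Kung, then split on whether arbitrarily large projective geometries are present, then on whether arbitrarily large cliques are present---is the correct architecture of the proof in those references.

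One small attribution slip: you say that pinning down the exact exponential base ``consumes the bulk of \cite{gw2}'', but \cite{gw2} is the Geelen--Whittle paper on cliques in dense $\GF(q)$-representable matroids, which is what drives the quadratic-versus-linear dichotomy. The result that super-polynomial density forces projective geometries (and hence that excluding the next-larger projective geometry caps the exponential base) is \cite{gk1}, Geelen--Kabell. The synthesis into the form stated here is \cite{gkw1}. This does not affect the mathematics of your outline, only the labelling of which paper does what.
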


We say that a simple  rank-$r$ matroid $M \in\mathcal M$ 
is {\em extremal} if $|M|=h_{\mathcal M}(r)$.
We say that a minor-closed class $\cM$ has {\em linear density}
if outcome (i) of Theorem~\ref{growth-rates} holds.
If outcome (ii) holds, we say that $\mathcal M$ is 
{\em quadratically} dense, and
if outcome (iii) holds, we say that $\mathcal M$ is 
{\em base-$q$ exponentially} dense.

Given functions $f,g:\bZ_+\rightarrow \bZ$ we will write
$f(r)\approx g(r)$ to denote that $f(r) = g(r)$ for all
sufficiently large $r$; we say that $f(r)$ and $g(r)$ are 
{\em eventually equal}.

\subsection*{Exponentially dense classes}
Exponentially dense classes are the easiest to understand in terms of growth rates;
this is due, in part, to the fact that the extremal matroids are very highly connected.
Geelen and Nelson \cite{nel} proved the following refinement of Theorem~\ref{growth-rates}.
 
\begin{theorem}
\label{peters-theorem}
Let $q$ be a prime power and $\mathcal M$ be a base-$q$ exponentially dense
minor-closed class of matroids. Then there exist $k,d\in \bZ_+$ with
$0\leq d\leq \frac{q^{2k}-1}{q^2-1}$, such that
$$h_{\mathcal M}(r)\approx \frac{q^{r+k}-1}{q-1}-qd.$$
\end{theorem}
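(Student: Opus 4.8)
The plan is to extract, from the exponential density hypothesis, a single finite field $\bF$ of order $q$ together with a highly connected "base matroid" that the extremal matroids approximate, and then to count carefully. First I would invoke Theorem~\ref{growth-rates}: since $\mathcal M$ is base-$q$ exponentially dense, there is a finite field $\bF$ of order $q$ with $\mathcal M$ containing all $\bF$-representable matroids, and $h_{\mathcal M}(r)=O(q^r)$. In particular $\mathcal M$ is a subclass of the class of $\bF$-representable matroids that contains $\PG(r-1,q)$ for all $r$, so the extremal matroids of sufficiently large rank contain arbitrarily large projective geometries over $\bF$, hence over $\bFp$. The next step is to argue that the extremal rank-$r$ matroids $M_r$ are vertically $k$-connected for some fixed $k$ and all large $r$; this is the connectivity input that Geelen–Nelson established, and it is what lets us feed $M_r$ into the refined structure theorem. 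The rough idea is that a low-order vertical separation in an extremal matroid could be "uncrossed" and one side replaced by a projective geometry of the same rank, strictly increasing the number of elements unless the separation is trivial.

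With high connectivity in hand, I would apply Theorem~\ref{refinement1} to the minor-closed class $\mathcal M$ (viewed as a class of $\bF$-represented matroids — here one must be slightly careful that growth-rate extremality is a property of the abstract matroid, but every simple $\bF$-representable matroid has an $\bF$-representation, so this is harmless): there are finitely many subfield templates $\Phi_1,\dots,\Phi_t$ and constants $k,m$ such that every simple vertically $k$-connected $M\in\mathcal M$ with a $\PG(m-1,\bFp)$-minor conforms to some $\Phi_i$. Since $\mathcal M$ contains all $\bF$-representable matroids, one of these templates must be (projectively equivalent to) the trivial template whose conforming matroids are exactly the $\bF$-representable matroids; but more to the point, I would compute directly the growth rate of each class $\cM(\Phi_i)$. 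A matroid conforming to a subfield template is, up to a bounded-rank perturbation and a bounded number of "extra" elements governed by $C$, $D$, $Y$, a submatroid of $\PG(r-1,q)$ obtained by restricting certain rows/columns to the subspaces $\Lambda,\Delta$ over $\bF_0$; counting the maximum number of projectively distinct columns of such a structured matrix in rank $r$ should yield, after simplification, a formula of the shape $\frac{q^{r+k}-1}{q-1}-qd$ for template-dependent non-negative integers $k$ and $d$, with the $-qd$ term accounting for a block of columns forced to lie in a lower-dimensional or subfield-constrained flat. The constraint $0\le d\le \frac{q^{2k}-1}{q^2-1}$ should drop out of the fact that the perturbation rank and the template data are bounded, so the "deficiency" $qd$ cannot exceed the number of points in a projective geometry of rank $2k$ over $\bF_{q^2}$-type count; I would reverse-engineer the exact bound from the extremal configuration.

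Finally I would assemble the pieces: $h_{\mathcal M}(r)$ is eventually the maximum, over the finitely many templates $\Phi_i$ that actually occur, of the growth-rate functions just computed, each of which is eventually of the form $\frac{q^{r+k_i}-1}{q-1}-qd_i$. The maximum of finitely many such functions is, for all sufficiently large $r$, equal to a single one of them — namely the one with the largest $k_i$, and among those the smallest $d_i$ — so $h_{\mathcal M}(r)\approx \frac{q^{r+k}-1}{q-1}-qd$ for that choice, which is exactly the claimed statement. The main obstacle I expect is the middle step: verifying that every template class $\cM(\Phi_i)$ has growth rate eventually of precisely this closed form, rather than merely $O(q^r)$ with a lower-order error term. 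This requires an exact extremal analysis of the structured matrices in Figure~\ref{template1} — determining, for each template, the maximum number of pairwise projectively inequivalent columns realizable in rank $r$ — and in particular pinning down the integer $d$ and proving the inequality $d\le\frac{q^{2k}-1}{q^2-1}$. I would handle this by choosing, within each template, the sub-configuration that maximizes the column count (taking $\Lambda$, $\Delta$ as large as the template allows and filling the frame/subfield blocks greedily), showing it is achievable by an actual member of $\cM(\Phi_i)$, and showing no conforming matroid beats it; the arithmetic identity relating the perturbation rank $k$ to the bound on $d$ is where the bookkeeping is heaviest.
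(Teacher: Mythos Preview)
The paper does not actually prove this theorem; it is quoted as a result of Geelen and Nelson \cite{nel}, and the only additional remark is that ``the growth rate function is attained by rank-$k$ projections of projective geometries.'' So there is no in-paper proof to compare against, only the citation.

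That said, your proposed route has a genuine gap. From the hypothesis that $\mathcal M$ is base-$q$ exponentially dense, Theorem~\ref{growth-rates} gives you that $\mathcal M$ \emph{contains} all $\GF(q)$-representable matroids; it does \emph{not} say that $\mathcal M$ is contained in the $\GF(q)$-representable matroids. Your sentence ``In particular $\mathcal M$ is a subclass of the class of $\bF$-representable matroids'' has the inclusion backwards, and this is exactly the step you need for the rest of the plan. Theorem~\ref{refinement1} applies only to minor-closed classes of $\bF$-\emph{represented} matroids, so to feed an extremal matroid $M_r$ into it you must first know that $M_r$ is $\GF(q)$-representable. Nothing in the hypotheses guarantees this: a base-$q$ exponentially dense class can contain matroids that are not representable over $\GF(q)$ (or over any field), and a priori one of these could be extremal. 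Your parenthetical ``every simple $\bF$-representable matroid has an $\bF$-representation, so this is harmless'' does not address this, because the issue is precisely whether the extremal matroids are $\bF$-representable at all.

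In fact, the statement that the extremal matroids are eventually $\GF(q)$-representable --- indeed, that they are rank-$k$ projections of projective geometries --- is a large part of the content of the Geelen--Nelson theorem, not an input to it. Their argument does not route through the template theorems of this paper; it proceeds by direct extremal analysis of (abstract) matroids containing large $\PG(m-1,q)$-minors, building on the density results in \cite{gk1,gkw1}. If you want to salvage your approach, you would first need an independent argument that a simple rank-$r$ matroid in $\mathcal M$ with close to $h_{\mathcal M}(r)$ elements must be $\GF(q)$-representable; only then could you pass to the subclass $\mathcal M'$ of $\GF(q)$-represented members of $\mathcal M$ and invoke Theorem~\ref{refinement1}. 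But that first step is essentially the hard part.
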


Geelen and Nelson prove a little more;
they show that the growth rate function is attained 
by rank-$k$ projections of projective geometries.

\subsection*{Quadratically dense classes}
We know considerably less about classes of quadratic density,
although this looks to be a promising direction for future research.
The next conjecture, which may not be difficult to prove,
is that the extremal matroids are also highly connected in this case.

\begin{conjecture}
\label{growth1}
Let $\mathcal M$ be a quadratically dense minor-closed class. Then, for each $k\in\mathbb Z_+$,
there is an integer $r$ such that all extremal matroids in $\mathcal M$ with rank at 
least $r$ are vertically $k$-connected.
\end{conjecture}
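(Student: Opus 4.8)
The plan is to contradict extremality. Suppose that for arbitrarily large $r$ there is an extremal matroid $M\in\cM$ of rank $r$ that is not vertically $k$-connected; we will produce, for $r$ large enough, a rank-$r$ member of $\cM$ with more than $|M|$ elements. Since $M$ is not vertically $k$-connected there is a partition $(X,Y)$ of $E(M)$ with $\lambda:=r(X)+r(Y)-r(M)\le k-2$ and neither side spanning; write $a=r(X)$ and $b=r(Y)$, so that $a+b=r+\lambda$ and $1\le a\le b\le r-1$. Replacing $X$ by $\cl_M(X)$ we may assume $X$ is a flat and $Y=E(M)-X$; then the ``guts'' $P:=X\cap\cl_M(Y)$ is a flat of rank at most $\lambda$, so $|P|\le h_{\cM}(\lambda)=O_k(1)$. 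Note that, since $\cM$ need not be representable over a finite field, Theorem~\ref{structure1} does not apply directly; the argument below is instead driven by the growth-rate function.

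First I would dispose of the case in which the small side has bounded rank. If $a\le a_0$ for a constant $a_0=a_0(k)$, then $|X|\le h_{\cM}(a_0)=O_k(1)$ while $M\del X\in\cM$ has rank $b\le r-1$ and at least $|M|-h_{\cM}(a_0)$ elements; hence $h_{\cM}(r)\le h_{\cM}(r-1)+O_k(1)$. So this case is defeated as soon as one establishes the growth-rate lemma that, for a quadratically dense class, $h_{\cM}(r)-h_{\cM}(r-1)\to\infty$ (which in particular gives monotonicity of $h_{\cM}$ on large-rank windows). I expect this to follow from the known quadratic growth-rate estimates together with the hypothesis that $\cM$ contains all graphic matroids, which already forces $h_{\cM}(r)\ge\binom{r+1}{2}$ and supplies a plentiful stock of low-rank gadgets with which to ``pad'' a near-extremal rank-$(r-1)$ matroid up to rank $r$.

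For the main case, with both $a$ and $b$ large, I would use a \emph{local replacement}. The separation says that $X$ and $Y$ interact only through the rank-$(\le\lambda)$ flat $P$. Choose a rank-$a$ matroid $N\in\cM$ of maximum size among those having a suitably placed copy of $M|P$ as a restriction, and form the amalgam of $N$ with $M|(Y\cup P)$ over $P$ --- a generalized parallel connection in the favourable case that $P$ is modular in one of the two parts. This amalgam has rank $a+b-\lambda=r$ and $|N|+|Y|$ elements; choosing $N$ so that $|N|\ge|X|$ (for which $M|X$ is itself a natural candidate) gives at least $|M|=|X|+|Y|$ elements, with strict inequality unless $M|X$ was already optimal for its rank and guts --- in which case one iterates, replacing \emph{both} sides by optimal gadgets, to force a strict gain. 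The role of the hypothesis that $\cM$ contains all graphic matroids is to guarantee a rich supply of such gadgets and to keep the relevant gluings inside $\cM$.

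The crux, and the step I expect to be hardest, is precisely this amalgamation: in a general minor-closed class a generalized parallel connection need not remain in $\cM$, so one must either show that the relevant bounded-rank amalgams can always be realised inside $\cM$ --- exploiting minor-closedness and the presence of all graphic matroids --- or else bypass gluing entirely and prove the inequality $h_{\cM}(a)+h_{\cM}(b)<h_{\cM}(r)$ for $a+b\le r+k-2$ and $a,b\le r-1$ directly from a regularity statement for the quadratic growth-rate function, i.e.\ a quadratic analogue of Theorem~\ref{peters-theorem}. Either way the heart of the matter is a sufficiently precise understanding of $h_{\cM}$ for quadratically dense classes --- enough to see that splitting the rank is strictly wasteful. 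Given how much less is currently known here than in the exponential case, this is where the real work lies, and it is plausibly most cleanly carried out in tandem with a broader structural study of quadratically dense classes.
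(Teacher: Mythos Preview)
The statement you are attempting to prove is listed in the paper as a \emph{conjecture}; the authors remark that it ``may not be difficult to prove'' but they do not give a proof, so there is nothing in the paper to compare your argument against.

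That said, a few comments on your outline. Your basic strategy --- show that a low-order vertical separation in an extremal matroid is wasteful --- is the natural one. However, your ``local replacement'' step is an unnecessary detour. You do not need to build a bigger matroid via amalgamation; you only need the trivial upper bound. If $M$ is simple and extremal of rank $r$ with a separation $(X,Y)$ as you describe, then $M|X$ and $M|Y$ are simple members of $\cM$ of ranks $a$ and $b$, so
\[
h_{\cM}(r)=|M|=|X|+|Y|\le h_{\cM}(a)+h_{\cM}(b),
\]
with $a+b\le r+k-2$ and $a,b\le r-1$. Thus the whole conjecture reduces to the purely numerical statement that $h_{\cM}(a)+h_{\cM}(b)<h_{\cM}(r)$ under these constraints, for all sufficiently large $r$. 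No gluing, no generalized parallel connection, and hence none of the closure-under-amalgamation worries you raise.

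You are right that this numerical inequality is the real content. It is \emph{not} a consequence of $h_{\cM}(r)=\Theta(r^2)$ alone: knowing only $\binom{r+1}{2}\le h_{\cM}(r)\le Cr^2$ does not rule out, for instance, $h_{\cM}(r)-h_{\cM}(r-1)$ being bounded along a subsequence, which is exactly what your Case~1 needs to exclude. (Minor-closed classes need not be closed under direct sum, so even monotonicity of $h_{\cM}$ is not automatic.) So your diagnosis is correct: the missing ingredient is a regularity statement for $h_{\cM}$ in the quadratic regime --- something like eventual convexity, or at least unbounded first differences --- and this is precisely the sort of thing the paper's Conjecture~\ref{growth2} would supply. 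Your write-up would be cleaner if you dropped the amalgamation discussion and stated the reduction to this inequality directly.
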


Next we conjecture that growth rates of quadratically dense classes
are eventually quadratic functions.

\begin{conjecture}
\label{growth2}
Let $\mathcal M$ be a quadratically-dense minor-closed class of matroids. Then
there is a quadratic polynomial $p$ such that, $h_{\mathcal M}(r)\approx p(r)$.
\end{conjecture}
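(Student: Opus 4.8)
The plan is to reduce, via Conjecture~\ref{growth1} and Theorem~\ref{refinement2}, to a question about a single frame template, and then to determine the growth rate of a frame‑template class exactly. Two things should be noted at the outset. First, a quadratically dense class has genuinely quadratic density: since $\cM$ contains all graphic matroids, $h_\cM(r)\ge\binom{r+1}{2}$; and, writing $p=|\bFp|$, no member of $\cM$ can have $\PG(m-1,\bFp)$ as a minor for arbitrarily large $m$, since that would give $h_\cM(m)\ge\tfrac{p^m-1}{p-1}$ for every $m$, against $h_\cM(r)=O(r^2)$ --- so there is a fixed $m$ such that no member of $\cM$ has a $\PG(m-1,\bFp)$‑minor. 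Second, the structure theorems of this paper concern matroids represented over a finite field, whereas Conjecture~\ref{growth2} concerns abstract matroids; so one first needs the reduction that, for the purpose of computing $h_\cM$, a quadratically dense class may be taken to lie inside the class of $\bF$‑represented matroids for some finite field $\bF$. That reduction is not contained in the excerpt, and establishing it is part of the work; I assume it below.

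By Conjecture~\ref{growth1}, for every $k\in\bZ_+$ there is an $r_0$ such that each extremal matroid of $\cM$ of rank at least $r_0$ is simple and vertically $k$‑connected. Choosing $k$ large enough to apply Theorem~\ref{refinement2} (with the integer $m$ fixed above), we obtain frame templates $\Phi_1,\dots,\Phi_s,\Psi_1,\dots,\Psi_t$ such that $\cM$ contains each $\cM(\Phi_i)$ and the duals of each $\cM(\Psi_j)$, and such that every extremal matroid of large rank either conforms to some $\Phi_i$ or has its dual conforming to some $\Psi_j$. The classes $\cM(\Psi_j)^*$ are linearly dense, being duals of frame‑matroid classes, so an extremal matroid of large rank, having $\Omega(r^2)$ elements, cannot lie in one of them; hence for all large $r$ the extremal matroids conform to one of $\Phi_1,\dots,\Phi_s$. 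Combined with $\cM\supseteq\cM(\Phi_i)$ this gives
\[ h_\cM(r)\approx\max_{1\le i\le s}h_{\cM(\Phi_i)}(r). \]
A maximum of finitely many functions each eventually equal to a polynomial is itself eventually equal to a polynomial: given polynomials $p_1,\dots,p_s$, for each pair either $p_i=p_j$ or $p_i-p_j$ has finitely many roots, so some $p_j$ is eventually at least as large as all the others and $\max_i p_i\approx p_j$. So it suffices to prove that, for a single frame template $\Phi$ over $\bF$, the function $h_{\cM(\Phi)}(r)$ is eventually equal to a polynomial; since $\cM(\Phi)\subseteq\cM$, that polynomial automatically has degree at most $2$.

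To analyse a single template $\Phi=(\Gamma,C,D,X,Y_0,Y_1,A_1,\Delta,\Lambda)$, recall that a simple rank‑$r$ matroid conforming to $\Phi$ is obtained from a matrix whose bulk is a $\Gamma$‑frame matrix on $n=r-O(1)$ rows, decorated by the bounded‑size data $C,D,X,Y_0,Y_1,A_1,\Delta,\Lambda$ together with the $Z$/$Y_1$ column‑sum operation, followed by contracting the fixed set $C$ and deleting $(B-X)\cup Y_1$. Up to parallel columns a $\Gamma$‑frame matrix on $n$ rows has at most $|\Gamma|\binom{n}{2}+n$ columns, and the decorations interact only with a bounded band of rows, so $h_{\cM(\Phi)}(r)\le\tfrac12|\Gamma|\,r^2+O(r)$. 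The real content is to make this exact: to produce an explicit quadratic $q_\Phi$ and, for every sufficiently large $r$, a simple rank‑$r$ matroid in $\cM(\Phi)$ with exactly $q_\Phi(r)$ elements. The upper bound amounts to a finite optimization --- once $n$ is large the contribution of the bounded data stabilizes --- over the ways the frame part, the subspaces $\Delta$ and $\Lambda$, the contracted set $C$, and the $Y_1$‑sums can interact without creating loops or parallel elements. The matching construction is the frame analogue of the extremal examples underlying Theorem~\ref{peters-theorem}: a projection or perturbation of a Dowling geometry $Q_n(\Gamma)$ (of $M(K_n)$ when $\Gamma$ is trivial), arranged to conform to $\Phi$ and to realize the optimum.

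The main obstacle is exactly this construction for \emph{all} sufficiently large $r$, rather than for infinitely many: one must rule out the possibility that $h_{\cM(\Phi)}(r)$ dips below $q_\Phi(r)$ along a thin or periodic set of ranks, i.e.\ one must show that the deficiency $q_\Phi(r)-h_{\cM(\Phi)}(r)$ is eventually constant. This is the analogue, for frame templates, of the rigidity reflected by the constant $d$ in the exponential formula of Theorem~\ref{peters-theorem}; since a periodic deficiency would in fact falsify Conjecture~\ref{growth2} as stated, this step is the crux. The preliminary reduction to a fixed finite field noted above is a secondary obstacle, as is proving Conjecture~\ref{growth1} in the first place.
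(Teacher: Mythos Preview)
The paper does not prove this statement; it is stated explicitly as an open conjecture, with no proof given. So there is no ``paper's own proof'' to compare against, and what you have written is a proof \emph{strategy} for an open problem, not a proof that can be checked.

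As a strategy it is reasonable in outline, but it is not a proof, and you have yourself flagged most of the reasons why. First, it rests on Conjecture~\ref{growth1}, which the paper also leaves open. Second, Theorem~\ref{refinement2} applies only to classes of $\bF$-represented matroids over a \emph{fixed} finite field, whereas Conjecture~\ref{growth2} is about arbitrary minor-closed classes; the ``reduction to a fixed finite field'' that you assume is not in the paper and is a substantial problem in its own right --- a quadratically dense class need not sit inside the $\bF$-representable matroids for any finite $\bF$ (this is precisely the territory of Conjecture~\ref{beyond1}). Third, the assertion that the dual template classes $\cM(\Psi_j)^*$ are linearly dense needs an argument: members of $\cM(\Psi_j)$ are bounded-rank perturbations of represented frame matroids, not frame matroids themselves, and one must check that cosimplicity on the dual side forces linear size. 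Finally, even granting all of the above, you correctly identify the crux --- showing that $h_{\cM(\Phi)}(r)$ is eventually \emph{equal} to a quadratic polynomial rather than merely $\Theta(r^2)$ --- and you do not resolve it. So the proposal is a plausible roadmap conditioned on several other open problems, not a proof.
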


We expect extremal matroids in classes of 
quadratic growth rates to be perturbed frame matroids.
In essence the next conjecture 
says that lifts of frame matroids determine the leading
coefficient in quadratic growth-rate functions. First we give some terminology.

Call a matroid $M$ an $(\alpha,t)$-{\em frame
matroid} if it has a basis $V\cup T$ with $|T|= t$ such that
\begin{itemize}
\item[(i)] the fundamental circuit of any $e\in E(M) - (V\cup T)$ contains
at most two elements of $V$, and
\item[(ii)] for each $u,v\in V$ there are $\alpha$ elements
that are in the span of $T\cup\{u,v\}$ but not in the span of either
$T\cup\{u\}$ or $T\cup \{v\}$.
\end{itemize}

\begin{conjecture} 
\label{growth2b}
Let $\cM$ be a  quadratically dense minor-closed class of 
matroids. Then there exist $\alpha, t\in\bZ_+$ such that
\begin{itemize}
\item
$h_{\cM}(r) =  \alpha{r\choose 2} + O(r)$, and
\item
for each integer $r\ge t$, 
$\cM$ contains an $(\alpha,t)$-frame matroid of rank $r$.
\end{itemize}
\end{conjecture}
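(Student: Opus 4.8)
The plan is to reduce to highly connected extremal matroids and then apply the refined frame structure theory; the argument runs in three stages.

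\emph{Stage 1: reduction to high connectivity.} We first establish Conjecture~\ref{growth1}, so that for every $k$ all sufficiently high-rank extremal matroids of $\cM$ are vertically $k$-connected. If an extremal $M\in\cM$ of large rank $r$ admitted a partition $(X,Y)$ with $r(X)+r(Y)-r(M)$ bounded and with neither $X$ nor $Y$ spanning, then a standard connectivity estimate shows that $M$ is obtained, up to a bounded-rank perturbation, by gluing $M|\cl(X)$ to $M|\cl(Y)$ along a flat of bounded rank, so $|M|\le |M_1|+|M_2|+O(1)$ for some $M_1,M_2\in\cM$ of ranks $r_1,r_2<r$ with $r_1+r_2\le r+O(1)$. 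Since $\cM$ is quadratically dense, $h_\cM(s)=\Theta(s^2)$, and the strict convexity of $\binom s2$ forces $h_\cM(r_1)+h_\cM(r_2)+O(1)<h_\cM(r)$ once $r$ is large, contradicting extremality.

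\emph{Stage 2: structure and the upper bound.} Being quadratically dense, $\cM$ is not base-$q$ exponentially dense for any $q$, so by Theorem~\ref{growth-rates} there is an $m$ with no member of $\cM$ having a $\PG(m-1,\bFp)$-minor; likewise, since a minor-closed class excluding $M(K_n)$ cannot contain all graphic matroids, Theorem~\ref{growth-rates} shows $\{N\in\cM: N$ has no $M(K_n)$-minor$\}$ has only linear density, so all sufficiently high-rank extremal matroids have an $M(K_n)$-minor for the $n$ supplied by Theorem~\ref{structure4}. Combining this with Stage~1, every sufficiently high-rank extremal $M\in\cM$ is a rank-$(\le t)$ perturbation of a frame matroid and, by Theorem~\ref{refinement2}, $M$ or $M^*$ conforms to one of finitely many frame templates; the dual templates $\Psi_j$ are discarded because duals of frame matroids form a class of only linear density, so for large $r$ their members are not extremal. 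Put $\alpha:=\max_i|\Gamma_{\Phi_i}|$ over the surviving templates. A rank-$r$ matroid conforming to $\Phi_i$ has at most $|\Gamma_{\Phi_i}|\binom r2+O(r)$ elements, since its $\Gamma$-frame block contributes at most $|\Gamma_{\Phi_i}|\binom r2+O(r)$ while the bounded-width blocks $C,Y_0,Y_1$, the unit columns indexed by $Z$, the columns drawn from $\Lambda$, and the rank-$(\le t)$ perturbation together contribute only $O(r)$. Hence $h_\cM(r)\le\alpha\binom r2+O(r)$.

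\emph{Stage 3: the extremal examples.} Let $\Phi_{i_0}$ attain $\alpha=|\Gamma_{i_0}|$, and let $t$ be the total size of the bounded part of $\Phi_{i_0}$. For each $r\ge t$, build the matroid conforming to $\Phi_{i_0}$ whose $\Gamma$-frame block is the complete $\Gamma_{i_0}$-frame matrix on $r-t$ vertices (each pair of vertices joined by one column for each element of $\Gamma_{i_0}$) together with a unit column at every vertex, and with $Z$ empty; it lies in $\cM(\Phi_{i_0})\subseteq\cM$, has rank $r$, and one checks directly that it is an $(\alpha,t)$-frame matroid with frame $V$ the set of $r-t$ frame vertices and $|T|=t$: condition (i) holds because each frame-block element lies in a column with at most two nonzero entries among $V$, and condition (ii) holds because the complete $\Gamma_{i_0}$-frame block places exactly $|\Gamma_{i_0}|=\alpha$ points on the line through any two vertices of $V$ besides those two vertices. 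This matroid has $\alpha\binom r2+O(r)$ elements, so $h_\cM(r)\ge\alpha\binom r2+O(r)$; with Stage~2 this gives $h_\cM(r)=\alpha\binom r2+O(r)$, and the construction itself supplies the required $(\alpha,t)$-frame matroids.

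\emph{The main obstacle.} Stages~1 and~3 are essentially routine; the difficulty is Stage~2 for classes that are \emph{not} representable over any finite field. Quadratically dense classes of this kind exist---for instance the minor-closed class generated by the Dowling geometries of $\ZZ/2\times\ZZ/2$---so Theorems~\ref{structure4} and~\ref{refinement2} do not apply verbatim, and one must instead appeal to the general matroid structure theorem, whose specialization to $U_{2,\ell}$-minor-free quadratically dense classes is a biased-graph statement parallel to Theorem~\ref{structure4}. Extracting the exact leading constant $\alpha$ from that structure---verifying that a single template governs it, that the auxiliary blocks contribute only $O(r)$, and that the corresponding complete biased graphs themselves lie in $\cM$---is where the real work lies.
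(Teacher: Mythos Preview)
The statement you are addressing is Conjecture~\ref{growth2b}; the paper presents it as an open problem and offers no proof, so there is nothing to compare your argument against on that front.

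As a strategy your outline is reasonable for the special case where $\cM$ consists of $\bF$-representable matroids for some finite $\bF$, but as a proof of the stated conjecture it has a genuine gap, which you yourself correctly locate in your final paragraph. The conjecture is about \emph{arbitrary} quadratically dense minor-closed classes, and all of the structure theory you invoke in Stage~2 (Theorems~\ref{structure4} and~\ref{refinement2}) is restricted to matroids over a fixed finite field. Your proposed workaround is to appeal to a general matroid-theoretic analogue, but in the paper that analogue is itself Conjecture~\ref{beyond1}, explicitly flagged as ``likely to be difficult.'' So Stage~2 rests on an open conjecture at least as hard as the one you are trying to prove.

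There is a second, smaller gap: Stage~1 is a sketch of a proof of Conjecture~\ref{growth1}, another open conjecture in the paper. Your convexity argument is in the right spirit, but note that at the point where you invoke ``the strict convexity of $\binom s2$'' you do not yet know that $h_\cM(s)=\alpha\binom s2+O(s)$; you only know $h_\cM(s)=\Theta(s^2)$, and a function that is merely $\Theta(s^2)$ need not be convex. One can probably repair this with a more careful inductive argument using only the bounds $c_1 s^2\le h_\cM(s)\le c_2 s^2$, but as written the step is circular.

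In short: what you have is a plausible plan of attack for the representable case, together with an honest acknowledgement that the general case is blocked by the absence of a non-representable structure theorem. That is an accurate assessment of the state of play, but it is not a proof.
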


\subsection*{Linearly dense classes}
Extremal members of linearly dense classes are not 
always highly connected. This may make it more difficult
to understand the growth rates of these classes.
Even for graphic matroids this is still not well understood;
see, for example, the problems listed by Eppstein \cite{epp}.
Sergey Norin posed conjectures that, if true, shed light 
on growth rates of minor-closed classes of graphs.
The following three conjectures extend those conjectures
of Norin to matroids.

\begin{conjecture}
\label{linear1}
Let $\mathcal M$ be a linearly dense minor-closed class of matroids. 
Then there exists a sequence $(a,b_0,b_1,\ldots,b_{t-1})$ of 
integers such that, for all sufficiently large $r$
we have $h_{\mathcal M}(r)=ar+b_i$ where
$i\in\{0,\ldots,t-1\}$ and $i\equiv r (\mbox{mod } t)$.
\end{conjecture}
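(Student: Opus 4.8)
The plan is to exploit the structural hierarchy provided by Theorems~\ref{structure3}--\ref{structure5}, together with the refined templates of Section~4, reduced to the linearly dense setting. Since $\mathcal{M}$ has linear density, it contains neither all graphic matroids nor a projective plane over any field (by Theorem~\ref{growth-rates}); in particular the extremal matroids of large rank avoid $M(K_n)$ for some fixed $n$ and avoid $\PG(2,\bFp)$ for every prime field, hence avoid every high-rank projective geometry. The first step would be to split into the representable and non-representable cases. For the representable case, one applies Theorem~\ref{structure4} (with $m_0$ small): excluding a large clique, the highly connected members are bounded-rank perturbations of frame matroids, and since no large projective geometry appears, the $\Gamma$-frame part is governed by a group $\Gamma$ of bounded order. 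One then shows that in the linearly dense regime the frame template must in fact be of ``group-labelled graph'' type with the number of parallel classes per edge bounded, so that a rank-$r$ extremal matroid looks like a bounded-rank perturbation of a Dowling-type geometry on $O(r)$ elements; counting elements gives $h_{\mathcal{M}}(r)=ar+(\text{correction})$ with the correction controlled by the perturbation rank $t$ and by the residue of $r$ modulo the period of the template.

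The heart of the argument is then a careful element count. For a fixed frame template $\Phi$ (or its dual), the maximum size of a simple conforming matroid of rank $r$ is itself, for large $r$, an eventually-linear function of $r$; moreover the ``defect'' from the naive linear bound depends only on how $r$ interacts with the fixed finite data of $\Phi$ (the sizes of $C,D,X,Y_0,Y_1$, the dimensions of $\Lambda,\Delta$, and $|\Gamma|$). I would prove that for each individual template $\Phi$ there is an integer period $t_\Phi$ and constants $a_\Phi,b^{(\Phi)}_0,\dots,b^{(\Phi)}_{t_\Phi-1}$ with the extremal conforming size equal to $a_\Phi r + b^{(\Phi)}_{r \bmod t_\Phi}$ for $r$ large. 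Because $\mathcal{M}$ is determined (at high connectivity) by a \emph{finite} list of templates (Theorem~\ref{refinement2}, and similarly Theorem~\ref{refinement1} for the subfield case, which here collapses since no large prime-field projective geometry is present), $h_{\mathcal{M}}(r)$ is eventually the pointwise maximum of finitely many eventually-periodic-plus-linear functions. The maximum of such functions has the same form with period equal to the lcm of the individual periods (and leading coefficient $a=\max a_\Phi$, with the $b_i$ determined by which templates achieve the max on the relevant residue class).

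The remaining, and I expect hardest, step is to descend from \emph{highly connected} extremal matroids to \emph{all} extremal matroids: Conjecture~\ref{linear1} asserts eventual periodicity of $h_{\mathcal{M}}(r)$ with no connectivity hypothesis, whereas the structure theorems only control vertically $k$-connected members. One would need a decomposition result showing that a large-rank extremal matroid in a linearly dense class can be built, across low-order separations, from highly connected ``pieces'' drawn from related (derived) minor-closed classes, and that the extremal size adds up across the decomposition in a controlled way --- analogous to how Norin's graph conjectures handle small separations via clique-sum-like decompositions. This gluing analysis, and the verification that the resulting combination of eventually-periodic functions remains eventually periodic, is where the real difficulty lies; it is also the part least directly supplied by the theorems quoted above and would likely require its own structural input about how extremal configurations sit across small vertical separations.
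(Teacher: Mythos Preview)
The statement you are attempting to prove is explicitly a \emph{conjecture} in the paper; no proof is offered. It is listed alongside Conjectures~\ref{linear2} and~\ref{linear3} as an extension to matroids of open conjectures of Norin on minor-closed classes of graphs, and the surrounding text emphasises that ``even for graphic matroids this is still not well understood''. There is therefore no proof in the paper to compare your proposal against.

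Your outline, viewed as a strategy, has two structural gaps that would each need a major new theorem to close.

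\textbf{Scope of the structure theorems.} Conjecture~\ref{linear1} concerns an arbitrary linearly dense minor-closed class of matroids. Theorems~\ref{structure1}--\ref{structure5} and Theorems~\ref{refinement1}--\ref{refinement2} apply only to $\bF$-represented matroids for a \emph{fixed} finite field $\bF$. You allude to ``the representable and non-representable cases'' but provide no mechanism for the latter; the only candidate in the paper is Conjecture~\ref{beyond1}, which is itself open and is described as ``likely to be difficult''. So outside the finite-field setting your argument has no starting point.

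\textbf{Connectivity.} Even restricting to $\bF$-represented matroids, the structure and template theorems describe only vertically $k$-connected members of $\cM$. The paper states plainly that ``extremal members of linearly dense classes are not always highly connected'', so the highly connected structure does not by itself determine $h_{\cM}(r)$. You correctly flag the needed ``decomposition across low-order separations'' as the hardest step, but this is not a detail to be filled in: it is essentially the matroid analogue of the very mechanism that makes Norin's conjectures for graphs open. Nothing in the paper supplies such a decomposition, and without it the template-by-template extremal counts cannot be assembled into $h_{\cM}(r)$.

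In short, the proposal identifies plausible ingredients for the $\bF$-representable, highly connected subcase, but the reduction to that subcase is precisely the content of the conjecture, and the paper offers no proof because none is currently known.
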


\begin{conjecture}
\label{linear2}
Let $\mathcal M$ be a linearly dense minor-closed class of matroids. Then
$\lim_{r\to\infty}h_{\cM}(r)/r$ exists and is rational.
\end{conjecture}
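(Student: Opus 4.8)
The plan is to obtain both the existence of $\lim_{r\to\infty}h_{\mathcal M}(r)/r$ and its rationality from a single structural reduction; in fact the same reduction should yield the stronger Conjecture~\ref{linear1}, from which Conjecture~\ref{linear2} is immediate since there $h_{\mathcal M}(r)/r\to a\in\mathbb Z$. (Even existence of the limit is not automatic: a minor-closed class need not be closed under any natural gluing operation, so $h_{\mathcal M}$ is not obviously sub- or super-additive, and a bare Fekete-type argument is unavailable.) The first and main step is structural. Since $h_{\mathcal M}(r)=O(r)$, the class $\mathcal M$ has no $U_{2,\ell}$-minor for some integer $\ell$, so the full matroid structure theory — of which Theorems~\ref{structure1}--\ref{structure5} are the highly-connected shadow — applies. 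Combining it with a matroidal tangle/tree-decomposition framework, I would show that every simple extremal $M\in\mathcal M$ of large rank has a canonical tree-decomposition along separations of bounded order whose ``bricks'' are drawn from a finite list of types. Finiteness of this list is where extremality enters: a piece of the decomposition can, up to bounded additive error in rank and in number of elements, be replaced by an extremal piece with the same attachment, so a brick that is ``too complex for its rank'' would contradict $|M|=h_{\mathcal M}(r)$.

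Granting the decomposition, I would encode the construction of extremal matroids in a finite weighted digraph $H$ whose vertices are the possible bounded-size boundary states at a separation and whose arcs are the admissible bricks, each arc labelled with a positive integer rank-cost and a non-negative integer element-gain. Then $h_{\mathcal M}(r)$ is, up to an $O(1)$ correction accounting for the bounded-rank perturbations and for the ends of the decomposition tree, the maximum total element-gain over walks in $H$ of total rank-cost exactly $r$. The per-unit asymptotics of this quantity equals the maximum, over directed cycles of $H$, of (element-gain)/(rank-cost) — a ratio of non-negative integers, hence rational — which gives Conjecture~\ref{linear2}. The sharper statement that maximum element-gain rank-$r$ walks in a finite integer-labelled digraph are eventually linear in $r$ with periodic remainder (a max-plus cyclicity / rational-generating-function fact) then gives Conjecture~\ref{linear1}.

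The hard part will be the first step. The structure theorems quoted here concern only highly connected matroids, and the paper emphasises that extremal members of linearly dense classes need not be highly connected, so one must (i) promote the local structure to a global bounded-width tree-decomposition — this should emerge from the forthcoming structure series, but isolating precisely the statement needed about extremal configurations is delicate — and (ii) run the extremality exchange argument uniformly, controlling boundary data so that only boundedly many brick types survive. A further technical point is the bookkeeping of the bounded-rank perturbations: one must arrange the decomposition so that perturbation support is confined to boundedly many bricks, so that it contributes an eventually-periodic $O(1)$ correction to $h_{\mathcal M}(r)$ rather than accumulating with the number of bricks.
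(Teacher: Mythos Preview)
There is no proof in the paper to compare against: Conjecture~\ref{linear2} is stated as an open conjecture, one of three conjectures extending unpublished conjectures of Norin from graphs to matroids. So the relevant question is not whether your argument matches the paper's, but whether your outline constitutes a viable attack.

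Your high-level strategy --- reduce to a finite combinatorial system via a bounded-width tree-decomposition, then apply a max-plus cyclicity argument --- is a natural and sensible plan, and your identification of the hard step is accurate. But there is a genuine gap at the very first move. You write that ``the full matroid structure theory --- of which Theorems~\ref{structure1}--\ref{structure5} are the highly-connected shadow --- applies'' because $\mathcal M$ omits some $U_{2,\ell}$. Theorems~\ref{structure1}--\ref{structure5}, however, are theorems about $\bF$-represented matroids for a fixed finite field $\bF$; a linearly dense minor-closed class of abstract matroids need not consist of matroids representable over any one finite field (or at all). The analogue you need for classes omitting a uniform matroid is precisely Conjecture~\ref{beyond1} in Section~9, which the paper explicitly labels a conjecture and calls ``likely to be difficult''. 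So as written, your step~(i) presupposes an open problem of at least the same order of difficulty as the target.

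Even restricting to a minor-closed class of $\bF$-represented matroids, the paper's announced structure theorem is for highly connected members, and the global decomposition you invoke (``should emerge from the forthcoming structure series'') is not available in the paper. Your exchange argument for finiteness of brick types is plausible in spirit but would need a concrete replacement lemma guaranteeing that swapping a brick for an extremal one of the same boundary type keeps the result in $\mathcal M$; minor-closure alone does not give this, and you have not indicated what closure property you would use. In short: the digraph/max-plus endgame is fine, but everything upstream of it currently rests on conjectural or unwritten structure, so this is a research programme rather than a proof.
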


\begin{conjecture}
\label{linear3}
Let $\mathcal M$ be a linearly-dense minor-closed class of matroids. Then
$\lim_{r\to\infty}h_{\cM}(r)/r$ exists and is achieved by a subfamily of $\cM$
of bounded pathwidth.
\end{conjecture}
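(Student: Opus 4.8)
The plan is to prove a ``linear template'' structure theorem for the near-extremal matroids in $\mathcal M$, analogous to Theorems~\ref{refinement1} and~\ref{refinement2} but in the linear regime, and then to read off all three conjectures at once: such a template produces, for each length $k$, a member of $\mathcal M$ of pathwidth bounded independently of $k$ whose ratio of size to rank is a fixed rational number tending to the optimum, which is exactly what Conjectures~\ref{linear1}--\ref{linear3} assert.

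First I would record the easy consequences of linear density. Since $h_{\mathcal M}(r)=O(r)$, outcome (iv) of Theorem~\ref{growth-rates} fails, so there is an integer $q$ with no member of $\mathcal M$ having a $U_{2,q+2}$-minor; iterating this over flats shows that every rank-$c$ flat of a member of $\mathcal M$ spans at most $g(c)$ elements, for some function $g$ depending only on $\mathcal M$. In particular $\mathcal M$ contains neither all graphic matroids nor, for large $n$, the matroid $M(K_n)$, so by Theorem~\ref{structure1} and its refinements the vertically $k$-connected members of $\mathcal M$ are rank-$(\le t)$ perturbations of frame matroids, of duals of frame matroids, or of matroids confined to a subfield, with the underlying object forced to be sparse in each case --- for instance the biased graph underlying a frame matroid of rank $r$ in $\mathcal M$ has only $O(r)$ edges on about $r$ vertices. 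Consequently the highly connected ``chunks'' inside a member of $\mathcal M$ are globally sparse, and the first substantive task is to use this to show that a near-extremal matroid $M$ of rank $r$ may be taken to have a nested family of separations of bounded order chopping it into $\Theta(r)$ minors of bounded rank --- bounded ``linear width'' up to a bounded-rank error --- or, where this genuinely fails (it can: triangulations of grids are extremal among planar graphic matroids yet have unbounded branchwidth), that $M$ may be replaced by another member of $\mathcal M$ of the same asymptotic density that does admit such a decomposition.

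Given such a path-like decomposition into bounded-rank pieces, a pigeonhole argument completes the template: there are only finitely many isomorphism types of bounded-rank matroid in $\mathcal M$ and finitely many ways to glue two of them along a bounded-rank flat, so some ``tile'' $T\in\mathcal M$ with a fixed gluing pattern recurs along $M$, and chaining $k$ copies yields a matroid $T^{(k)}\in\mathcal M$ of rank $ks+O(1)$ with $km+O(1)$ elements whose pathwidth is at most $s$ plus the gluing rank, independently of $k$. A matching lower bound --- a Fekete-type superadditivity, building from extremal matroids of ranks $r$ and $s$ a member of $\mathcal M$ of rank $r+s$ with at least $h_{\mathcal M}(r)+h_{\mathcal M}(s)-O(1)$ elements --- shows that $\lim_{r\to\infty} h_{\mathcal M}(r)/r$ exists; call it $a$. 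Optimising $m/s$ over the finitely many tiles then exhibits $a$ as a maximum of ratios of integers, hence rational (Conjecture~\ref{linear2}), refines to the periodic formula of Conjecture~\ref{linear1}, and displays $\{T^{(k)}\}_k$ as a bounded-pathwidth subfamily of $\mathcal M$ attaining the limit (Conjecture~\ref{linear3}).

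The main obstacle is the first step. Linear density does not bound the branchwidth of the extremal matroids, so one cannot simply decompose $M$; instead one must prove that the high-branchwidth portions of an extremal matroid are density-inefficient and can be traded, with no asymptotic loss, for path-like gadgets. This is the matroidal analogue of the corresponding open statements for minor-closed classes of graphs, and it is where the structure theory for highly connected matroids --- applied to the connected tangles inside $M$ --- would have to do the real work; for classes not representable over any finite field one would additionally need the general matroid structure theorem promised in the introduction, since Theorem~\ref{structure1} as stated concerns only $\mathbb F$-represented matroids.
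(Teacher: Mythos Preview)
The statement you are addressing is Conjecture~\ref{linear3}, not a theorem; the paper offers no proof of it. Indeed, the paper explicitly presents Conjectures~\ref{linear1}--\ref{linear3} as matroidal extensions of conjectures of Norin for minor-closed classes of graphs, and even those graph conjectures are open. There is therefore nothing to compare your proposal against: the paper states the conjecture and moves on.

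Your write-up is an honest outline rather than a proof, and you already flag the decisive gap yourself: linear density does not bound branchwidth, so one cannot directly decompose an extremal matroid into a path of bounded-rank pieces, and the claim that the high-branchwidth parts are ``density-inefficient'' and can be traded away without asymptotic loss is precisely the unproved heart of the matter. Two further points are worth noting. First, your Fekete-type superadditivity step assumes one can combine extremal matroids of ranks $r$ and $s$ into a member of $\mathcal M$ of rank $r+s$ losing only $O(1)$ elements; but a minor-closed class need not be closed under direct sums or $2$-sums, so this requires justification that you have not supplied. Second, Theorem~\ref{structure1} and its refinements apply only to $\bF$-represented matroids, whereas Conjecture~\ref{linear3} is stated for arbitrary minor-closed classes; you acknowledge this at the end, but it means that even a complete execution of your plan in the represented case would leave the conjecture open in the generality in which it is posed.
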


\section{Growth rates for classes over finite fields}\label{growthoverfinitefields}

The extremal members of exponentially dense minor-closed classes
are known to be highly connected and the extremal members of
quadratically dense classes are conjectured to be highly connected.
Therefore, Theorems~\ref{refinement1} and~\ref{refinement2}
should explain the specific mechanisms that control growth rates
of exponentially dense and quadratically dense minor closed
classes of matroids represented over a given finite field.
With a bit of additional work, one might be able to extract information
about the eventual growth rates for particular classes of interest.

\subsection*{Representation over two fields}
First consider the class of matroids that are representable
over two given fields $\bF_1$ and $\bF_2$.
One of the fields needs to be finite or the growth rate will be infinite.

\begin{problem}
\label{twofields}
Let $\bF_1$ and $\bF_2$ be fields with $\bF_1$ finite and let
$\cM$ be the class of matroids representable over both $\bF_1$ and $\bF_2$.
Determine the growth-rate function for $\cM$.
\end{problem}

Explicit answers are known for this problem in the
case that $|\bF_1|= 2$, see~\cite{heller}, and in the case that
$|\bF_1|=3$, see~\cite{kung3,ko,ovw}.

If $\bF_1$ and $\bF_2$ have the same characteristic,
then the class $\cM$ will be base-$q$ exponentially dense,
where $q$ is the size of the largest common subfield, up to isomorphism,
of $\bF_1$ and $\bF_2$.
We hope that the eventual growth rate functions will
be completely determined for all instances of Problem~\ref{twofields};
the most general partial result is due to 
Nelson~\cite{nelson2}.
\begin{theorem}
Let $q$ be a prime power, let $j\ge 3$ be an odd integer, and let 
$\cM$ be the class of matroids representable
over the field of order $q^2$ as well as over the field of order $q^j$.
Then
$$ h_{\cM}(r) \approx \frac{q^{r+1}-1}{q-1} -q. $$
\end{theorem}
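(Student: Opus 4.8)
The plan is to reduce the theorem to identifying the two parameters $k$ and $d$ in Theorem~\ref{peters-theorem} for this particular class, and then to prove that $k=1$ and $d=1$.

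\emph{Setup.} Since $j$ is odd, $\gcd(2,j)=1$, so the largest common subfield of the fields of orders $q^2$ and $q^j$ is $\GF(q)$. Hence $\cM$ contains every $\GF(q)$-representable matroid, while $\cM$ is a \emph{proper} subclass of the $\GF(q^2)$-representable matroids, because a sufficiently-high-rank projective geometry over $\GF(q^2)$ is not representable over $\GF(q^j)$. It follows from Theorem~\ref{growth-rates} that $\cM$ is base-$q$ exponentially dense: $\cM$ is neither linearly nor quadratically dense and does not contain all rank-$2$ matroids, so outcome (iii) holds, and the field there has order $q$ because any field whose representable matroids all lie in $\cM$ embeds in both $\GF(q^2)$ and $\GF(q^j)$, hence in $\GF(q)$. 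Applying Theorem~\ref{peters-theorem} now gives $k,d\in\bZ_+$ with $0\le d\le\frac{q^{2k}-1}{q^2-1}$ and $h_\cM(r)\approx\frac{q^{r+k}-1}{q-1}-qd$, so it remains only to pin down $k$ and $d$.

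\emph{Lower bound.} To get $k\ge1$ together with the matching lower bound I would exhibit, for each large $r$, a simple rank-$r$ matroid $N_r\in\cM$ with exactly $\frac{q^{r+1}-1}{q-1}-q$ elements. The natural candidate is a ``$\GF(q^2)$-cone over $\PG(r-2,q)$'': fix a copy $H$ of $\PG(r-2,q)$, adjoin a point $e\notin\cl(H)$, and on each line $\cl(\{e,p\})$ with $p\in H$ place the full complement of $q^2+1$ collinear points that can be realised over $\GF(q^2)$. Since distinct lines through $e$ meet only in $e$, a direct count gives $|N_r|=1+q^2\cdot\frac{q^{r-1}-1}{q-1}=\frac{q^{r+1}-1}{q-1}-q$. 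By construction $N_r$ is $\GF(q^2)$-representable; it is $\GF(q^j)$-representable because $j\ge2$ gives $q^j+1\ge q^2+1$, leaving room over $\GF(q^j)$ to re-coordinatise each long line while respecting the incidences among the lines, which are governed purely by the $\GF(q)$-structure of $H$. Hence $h_\cM(r)\ge\frac{q^{r+1}-1}{q-1}-q$ for all large $r$, so $k\ge1$; and if $k=1$ then necessarily $d\le1$.

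\emph{Upper bound.} The real content is the reverse inequality, i.e.\ $k\le1$ and $d\ge1$. Here I would use that extremal matroids in an exponentially dense minor-closed class are highly connected (part of the Geelen--Nelson analysis behind Theorem~\ref{peters-theorem}), so the refined structure theory applies. A large-rank extremal matroid $M\in\cM$ has no $\PG(m-1,\GF(q^2))$-minor once $m$ exceeds the threshold witnessing that $\cM$ omits large $\GF(q^2)$-geometries, while, being super-polynomially dense, $M$ has a $\PG(m'-1,\bFp)$-minor for every fixed $m'$ once $r$ is large; so Theorem~\ref{refinement1} places $M$ in $\cM(\Phi)$ for some subfield template $\Phi=(\bF_0,C,D,Y,A_1,A_2,\Delta,\Lambda)$ with $\cM(\Phi)\subseteq\cM$. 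Then $\bF_0$ is a proper subfield of $\GF(q^2)$, hence $\bF_0\subseteq\GF(q)$ since $\GF(q)$ is the largest proper subfield of $\GF(q^2)$; and, crucially, the ``overhead'' rows $D$ must be severely limited, because $\cM(\Phi)\subseteq\cM$ forces every matroid conforming to $\Phi$ to be $\GF(q^j)$-representable, and a second $\GF(q^2)$-generic row in $D$ together with the unbounded $\GF(q)$-geometry that $M$ already carries would produce a $\PG(m-1,\GF(q^2))$-minor of arbitrarily large rank, which is excluded. This collapses the template datum to the value that Theorem~\ref{peters-theorem} records as $k=1$. Given $k=1$, the same bookkeeping yields $d$; a short argument rules out $d=0$, since $d=0$ would demand a loss-free rank-$1$ projection of $\PG(r,q)$ inside $\cM$, and simultaneous representability over $\GF(q^j)$ with $j$ odd (this is where oddness, not merely coprimality to $2$, enters) forces a loss of exactly $q$ collinear points. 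Combined with the lower bound, $h_\cM(r)\approx\frac{q^{r+1}-1}{q-1}-q$.

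\emph{Where the difficulty lies.} The delicate step is the upper bound: extracting from the structure theory the \emph{exact} value $k=1$, rather than merely a bound on $k$, and then the exact constant $d=1$. This requires a precise accounting of which projective geometries over which subfields and extensions of $\GF(q)$ can sit inside a matroid that is simultaneously $\GF(q^2)$- and $\GF(q^j)$-representable --- a representability obstruction flowing from $\gcd(2,j)=1$ --- together with the matching classification of the compatible subfield templates. The reduction to Theorem~\ref{peters-theorem} and the lower-bound construction are, by comparison, routine.
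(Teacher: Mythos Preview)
The paper does not prove this theorem. It is quoted, without proof, as ``the most general partial result'' and attributed to Nelson~\cite{nelson2}; the surrounding paper is a survey stating structure theorems and conjectures, and this particular result is imported from elsewhere. So there is no in-paper proof to compare your proposal against.

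On your sketch itself: the reduction to Theorem~\ref{peters-theorem} and the identification of the base~$q$ are fine, but both halves of the argument have genuine gaps.

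\emph{Lower bound.} The count for the cone $N_r$ is correct, but the claim that $N_r$ is $\GF(q^j)$-representable is not established. Saying that $q^j+1\ge q^2+1$ ``leaves room'' to re-coordinatise the long lines is not a proof: you must actually produce a $\GF(q^j)$-representation in which the $\frac{q^{r-1}-1}{q-1}$ long lines through $e$ simultaneously hit the hyperplane $H$ in a copy of $\PG(r-2,q)$. That is a global constraint, not a local one, and hand-waving about ``incidences governed purely by the $\GF(q)$-structure of $H$'' does not discharge it.

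\emph{Upper bound.} This is where the real content lies, and the argument is too thin. You invoke Theorem~\ref{refinement1} to put an extremal $M$ into some $\cM(\Phi)$ and then assert that a ``second $\GF(q^2)$-generic row in $D$'' would force a large $\PG(m-1,q^2)$-minor; that step needs an actual construction of the minor, not an assertion. Likewise, the exclusion of $d=0$ is vague, and the parenthetical that ``this is where oddness, not merely coprimality to~$2$, enters'' is confused: for integers, $j$ odd and $\gcd(2,j)=1$ are the same condition, so you have not isolated any new hypothesis. Finally, note that Nelson's own proof in~\cite{nelson2} does not go through the template machinery at all; it is a direct extremal argument, so even a successful version of your outline would be a genuinely different route rather than a reconstruction of the cited proof.
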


If $\bF_1$ and $\bF_2$ have different characteristics,
then $\cM$ contains all graphic matroids but no projective planes
and, hence, $\cM$ is quadratically dense.
For this case, Kung~\cite{kung} proved quite good bounds on the
growth rate function 
and posed some interesting conjectures which we will expand upon below.
In essence we believe that the extremal matroids 
are projections of frame matroids; no lifts are required.
Note that, if $\Gamma$ is a subgroup of $\bFmult$ and $\alpha=|\Gamma|$, then
$$ 
h_{\cD(\bF,\Gamma)}(r) = \alpha {r\choose 2} + r.
$$
Moreover, note that projections only affect the linear term in the 
growth rate function since $h(r+1) = h(r) + O(r)$ for any
quadratic function $h$.
\begin{conjecture}
Let $\bF_1$ be a finite field, let $\bF_2$ be a
field with different characteristic from $\bF_1$,
let $\alpha$ be the size of the largest common
subgroup, up to isomorphism, of the
groups $\bFmult_1$ and $\bFmult_2$,
and let $\cM$ be the class of matroids representable over 
both $\bF_1$ and $\bF_2$.  Then 
$$h_{\cM}(r) =  \alpha {r \choose 2} + O(r).$$
\end{conjecture}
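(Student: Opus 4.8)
The plan is to establish the matching bounds $\alpha\binom{r}{2}+r\le h_{\cM}(r)\le \alpha\binom{r}{2}+O(r)$. For the lower bound, fix a cyclic group $\Gamma$ of order $\alpha$ that embeds in both $\bFmult_1$ and $\bFmult_2$. The point is that the matroid generated by a $\Gamma$-frame matrix depends only on the abstract group $\Gamma$ and the underlying gain graph, not on the ambient field, provided $\Gamma$ embeds in its multiplicative group: a cycle of the gain graph is a circuit precisely when its product of gains is trivial, which is a purely group-theoretic condition. Hence $\cD(\bF_1,\Gamma)$ and $\cD(\bF_2,\Gamma)$ coincide as classes of abstract matroids, so $\cD(\bF_1,\Gamma)\subseteq\cM$; since $h_{\cD(\bF_1,\Gamma)}(r)=\alpha\binom{r}{2}+r$, this gives the lower bound.

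For the upper bound, first observe that $\cM$ contains all graphic matroids but contains no projective plane, since a projective plane is representable only over fields of a single characteristic while members of $\cM$ are representable over $\bF_1$ and $\bF_2$, whose characteristics differ. In particular $\cM$ has no $\PG(2,\bFp)$-minor, where $\bFp$ is the prime subfield of $\bF_1$, and by Theorem~\ref{growth-rates} the class $\cM$ is quadratically dense with $h_{\cM}(r)=O(r^2)$. The next step is to reduce to highly connected extremal matroids: I would prove Conjecture~\ref{growth1} for $\cM$ (or only the consequence needed here), so that for all large $r$ every extremal member of $\cM$ of rank $r$ is simple and vertically $k$-connected; alternatively one runs a direct connectivity reduction, splitting an extremal matroid along a bounded-order vertical separation into two smaller members of $\cM$ of ranks $r_1,r_2$ with $r_1+r_2\le r+O(1)$ and exploiting that the missing term $\alpha r_1r_2$ in $\binom{r_1}{2}+\binom{r_2}{2}\le\binom{r}{2}$ absorbs both the $O(r)$ error and the bounded overlap.

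Now let $M$ be a simple vertically $k$-connected member of $\cM$ of rank $r$, viewed as an $\bF_1$-represented matroid. Since $\widetilde M$ has no $\PG(2,\bFp)$-minor, Theorem~\ref{refinement2} (with $m=3$, applied to the minor-closed class of $\bF_1$-representations of members of $\cM$) produces frame templates $\Phi_1,\dots,\Phi_s,\Psi_1,\dots,\Psi_t$ with $\cM(\Phi_a)\subseteq\cM$ and $\cM(\Psi_b)^*\subseteq\cM$, such that $M$ conforms to some $\Phi_a$ or $M^*$ conforms to some $\Psi_b$. In the latter case $M$ is a bounded-rank perturbation of the dual of a frame matroid; cosimplicity then forces $|M|=O(r)$ (the same scarcity phenomenon as in the cographic-codes lemma above), so such $M$ contribute only to the error term. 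So assume $M$ conforms to a template $\Phi=\Phi_a$ with group $\Gamma_\Phi\le\bFmult_1$, so $M$ is a rank-$(\le t_\Phi)$ perturbation of a $\Gamma_\Phi$-frame matroid over $\bF_1$. Two things remain. First, $|\Gamma_\Phi|\le\alpha$: taking the gain-graph block of a conforming matrix to generate a large $\Gamma_\Phi$-Dowling geometry and making the remaining (bounded) template data trivial shows that $\cM$ contains $\Gamma_\Phi$-Dowling geometries of every large rank, and such a geometry of rank at least $3$ is $\bF_2$-representable only if $\Gamma_\Phi$ embeds in $\bFmult_2$; hence $|\Gamma_\Phi|$ is at most the order of a common subgroup, i.e.\ $|\Gamma_\Phi|\le\alpha$. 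Second — and this is the crux — one must show the perturbation does not inflate the element count: $|M|\le|\Gamma_\Phi|\binom{r}{2}+O(r)$.

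The difficulty in the last point is that a general rank-$(\le t)$ perturbation of a $\Gamma$-frame matroid can carry on the order of $c\cdot|\Gamma|\binom{r}{2}$ elements with $c>1$; for instance an elementary \emph{lift} can split each parallel class of a doubled Dowling geometry using a field-specific linear dependence, roughly multiplying the element count. Such matroids are representable over $\bF_1$, so Theorem~\ref{structure4} cannot be used as a black box. The plan is to exploit that $M$ is also $\bF_2$-representable, with $\bF_2$ of different characteristic: I would show that for $r$ large this forces the lift part of the perturbation to be trivial, so that $M$ is obtained from a $\Gamma_\Phi$-frame matroid by a bounded number of elementary \emph{projections} only. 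Since a projection deletes one element and lowers the rank by at most one, this gives $|M|\le h_{\cD(\bF_1,\Gamma_\Phi)}(r+t_\Phi)=|\Gamma_\Phi|\binom{r}{2}+O(r)$, and combining over the finitely many templates with $|\Gamma_\Phi|\le\alpha$ yields $h_{\cM}(r)\le\alpha\binom{r}{2}+O(r)$, matching the lower bound. I expect proving that two representations of different characteristic force the perturbation to be a bare projection to be the main obstacle — in template terms, that the subspaces $\Lambda,\Delta$ and the $Z$-set lifting mechanism must be trivial modulo the $\bF_2$-representation — most naturally attacked by localizing to the elements on a single long line of the frame structure and analyzing when a rank-bounded perturbation of a $(\le|\Gamma_\Phi|+2)$-point line stays representable over fields of two characteristics. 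The high-connectivity reduction is a secondary hurdle, which (as the paper remarks about Conjecture~\ref{growth1}) is likely more routine.
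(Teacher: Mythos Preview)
The statement is posed in the paper as an open \emph{conjecture}; the paper gives no proof, so there is nothing to compare against. Your lower bound via $\Gamma$-frame (Dowling) matroids is correct and is exactly the construction the authors have in mind. Your upper-bound strategy---reduce to highly connected extremal matroids, apply Theorem~\ref{refinement2}, bound $|\Gamma_\Phi|$ by $\alpha$ via the Dowling representability criterion, and then control the perturbation---is precisely the line of attack the paper is advertising when it says these growth-rate questions should be approachable through the template theorems. So as a programme your proposal is well aligned with the paper's intent.

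As a proof, however, it has the gaps you yourself flag, plus one you gloss over. The connectivity reduction is Conjecture~\ref{growth1}; your inductive alternative is plausible but the bookkeeping with $\alpha r_1r_2$ needs the leading coefficient already pinned down, so some care is required to avoid circularity. Your dismissal of the dual-template case is too quick: the cographic-codes analogy shows that a sparse frame matroid for $M^*$ forces small circuits in $M$, but a bounded-rank \emph{lift} can break arbitrarily many parallel classes, so ``$M$ simple'' does not immediately contradict ``$M^*$ is a bounded perturbation of a sparse frame matroid''; this case needs its own argument. Finally, the step you call the crux---that representability over a field of a second characteristic forces the template's lift mechanism $(\Lambda,\Delta,Y_1,Z)$ to be essentially trivial---is the heart of the conjecture, and you have only indicated where to look (long lines of the frame structure), not how to finish. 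In short: sound strategy, honest about its incompleteness, but still a conjecture at the end.
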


When $\bFmult_1$ is a subgroup of $\bFmult_2$, we expect to do even better.
\begin{conjecture}\label{part1}
Let $\bF_1$ be a finite field of order $q$, let $\bF_2$ be a
field with different characteristic from $\bF_1$,
and let $\cM$ be the class of matroids representable over 
both $\bF_1$ and $\bF_2$.
If $\bFmult_1$ is a subgroup of $\bFmult_2$, then 
$$h_{\cM}(r) \approx (q-1) {r \choose 2} + r.$$
\end{conjecture}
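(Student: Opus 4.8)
Write $q=|\bF_1|$ and let $p$ be the characteristic of $\bF_1$, so $\bFp=\GF(p)$.
For the lower bound, fix $r$ and let $M$ be the simple rank-$r$ Dowling geometry over the group $\bFmult_1$, realised as the column matroid of a $\bFmult_1$-frame matrix over $\bF_1$; it has exactly $(q-1){r\choose 2}+r$ elements.
Since $\bFmult_1$ is, up to isomorphism, a subgroup of $\bFmult_2$, applying an injective homomorphism $\bFmult_1\to\bFmult_2$ entrywise turns this frame matrix into a frame matrix over $\bF_2$ with the same column matroid; hence $M\in\cM$ and $h_{\cM}(r)\ge (q-1){r\choose 2}+r$ for all $r$.

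For the upper bound it suffices to prove the reverse inequality for all sufficiently large $r$.
First note that, since $\bF_1$ and $\bF_2$ have different characteristic, $\PG(2,\bFp)$ — being representable only over fields of characteristic $p$ — is not in $\cM$, and hence by minor-closure no member of $\cM$ has a $\PG(2,\bFp)$-minor. (In particular $\cM$ satisfies outcome (ii) of Theorem~\ref{growth-rates}, so it is quadratically dense; together with Kung's estimates \cite{kung} this already gives $h_{\cM}(r)=(q-1){r\choose 2}+O(r)$.)
The plan is then to first establish Conjecture~\ref{growth1} for $\cM$, reducing the problem to bounding $|M|$ for simple, vertically $k$-connected extremal members $M$ of $\cM$ of large rank; next, regarding $\cM$ as a minor-closed class of $\bF_1$-represented matroids (choose an $\bF_1$-representation of each member), I would apply Theorem~\ref{refinement2} with $m=3$. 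This produces frame templates $\Phi_1,\dots,\Phi_s,\Psi_1,\dots,\Psi_t$ over $\bF_1$ so that each such $M$ either lies in some $\cM(\Phi_i)$ or has $M^*\in\cM(\Psi_j)$.

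In the co-frame case, $M^*$ is a bounded-rank perturbation of an $\bF_1$-represented frame matroid, so $M$ is a bounded-rank perturbation of a co-frame matroid; co-frame matroids over a finite field have linear density (a cosimple one of rank $r$ has $O(r)$ elements, just as for cographic matroids), and neither bounded-rank perturbation nor the bounded extra structure of a template changes this, so $|M|=O(r)<(q-1){r\choose 2}+r$ for large $r$.
In the frame case, $M\in\cM(\Phi_i)$ where the group $\Gamma_i$ of $\Phi_i$ is a subgroup of $\bFmult_1$, so $|\Gamma_i|\le q-1$; the frame part of $M$ is, up to parallel elements, a restriction of the rank-$r$ Dowling geometry over $\bFmult_1$, and so contributes at most $(q-1){r\choose 2}+r$ elements, while the remaining template data ($C,D,X,Y_0,Y_1$ bounded, the $Z$-columns near-unit, and the subspaces $\Lambda,\Delta$ bounded) contributes at most $O(r)$ further elements; this already recovers $h_{\cM}(r)=(q-1){r\choose 2}+O(r)$.
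To sharpen the $O(r)$ to $r$, one must show that every template $\Phi_i$ with $\cM(\Phi_i)\subseteq\cM$ has eventual growth rate at most $(q-1){r\choose 2}+r$, and here the second field re-enters: the matroids of $\cM(\Phi_i)$ are also $\bF_2$-representable, and any template structure contributing more than $r$ to the linear term should force, together with a high-rank piece of the frame part, a fixed submatroid that is $\bF_1$-representable but not $\bF_2$-representable, contradicting $\cM(\Phi_i)\subseteq\cM$.

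The last step is the main obstacle: proving that no frame template over $\bF_1$ all of whose matroids are $\bF_2$-representable can beat the Dowling bound in its linear term is the quadratic-density analogue of the precise template analysis carried out by Geelen and Nelson \cite{nel} in the exponentially dense case (Theorem~\ref{peters-theorem}), and it is where the bulk of the work lies.
Establishing Conjecture~\ref{growth1} for $\cM$ is a secondary, and presumably milder, obstacle.
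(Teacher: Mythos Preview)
This statement is a \emph{conjecture} in the paper; there is no proof there to compare against. Your lower bound via the rank-$r$ Dowling geometry over $\bF_1^\times$ is correct and is exactly the intended extremal family.

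Your upper-bound argument, however, is a programme rather than a proof, and you say as much. Two of its load-bearing ingredients are themselves open or unwritten: you invoke Conjecture~\ref{growth1} to reduce to highly connected extremal matroids, and Theorem~\ref{refinement2} is announced in the paper with its proof deferred to a future paper. Even granting both, the step you flag as ``the main obstacle'' --- showing that no frame template over $\bF_1$ all of whose matroids are $\bF_2$-representable can exceed the Dowling bound in its linear term --- is precisely the content of the conjecture beyond the leading coefficient, and you offer no mechanism for it beyond the hope that excess linear structure forces a non-$\bF_2$-representable minor. So what you have written is an accurate map of where the difficulty lies, not a proof.

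Two smaller points. First, your parenthetical that Kung's estimates already yield the leading coefficient $q-1$ overclaims: Kung proves good quadratic upper bounds, but pinning the leading coefficient down is exactly what the conjecture immediately preceding this one asserts, and the paper treats it as open. Second, to apply Theorem~\ref{refinement2} you need a minor-closed class of $\bF_1$-\emph{represented} matroids; ``choose an $\bF_1$-representation of each member'' is not quite the right move --- you should take \emph{all} $\bF_1$-representations of members of $\cM$, and observe that this class is closed under minors and isomorphism of represented matroids (it is, since $\bF_2$-representability is a property of the underlying abstract matroid and is minor-closed).
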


\subsection*{Excluding a minor}
Next we consider classes obtained by excluding a single minor $N$.
\begin{problem}
\label{excludedminor}
Let $\bF$ be a finite field, let $N$ be a matroid,
and let $\cM$ be the class of 
$\bF$-representable matroids with no $N$-minor.
Determine the eventual growth-rate function for $\cM$.
\end{problem}

If $N$ is not representable over $\bFp$, 
then $\cM$ will be base-$q$ exponentially dense, where
$q$ is the size of the largest subfield of $\bF$ over which
$N$ is not representable.
Now, by Theorem~\ref{peters-theorem}, there exist $d,k\in\bZ_+$ such that
$$ h_{\cM}(r) \approx \frac{q^{r+k}-1}{q-1} - qd.$$
At the very least, Theorem~\ref{refinement1}
should give an algorithm for computing $k$ and $d$.

Problem~\ref{excludedminor} remains open even in the
benign-looking case that $N$ is a line.
We hope, however, that the eventual growth rate functions will be determined explicitly 
for quite general instances of Problem~\ref{excludedminor},
such as when $N$ is a projective geometry or an affine geometry.
Nelson~\cite{nelson2} has proved one such result along these lines.
\begin{theorem}
Let $\bF$ be a finite field of square order $q^2$, let $n\ge 3$,
and let $\cM$ be
the set of $\bF$-representable matroids
with no $\PG(n+1,\bF)$-minor. Then
$$ h_{\cM}(r) \approx \frac{q^{r+n}-1}{q-1} - q\frac{q^{2n}-1}{q^2-1}.$$
\end{theorem}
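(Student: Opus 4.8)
The plan is to deduce the statement from the general theory of exponentially dense classes and then determine exactly the two parameters that theory produces. First I would record that $\cM$ is base-$q$ exponentially dense. Since $\PG(n+1,\bF)$ is $\bF$-representable but is not representable over any proper subfield of $\bF$, the class $\cM$ is a proper minor-closed subclass of the $\bF$-representable matroids that nevertheless contains every $\GF(q)$-representable matroid; hence $\cM$ is neither linearly nor quadratically dense and does not contain all rank-$2$ matroids, so Theorem~\ref{growth-rates} leaves only outcome~(iii), and because $\GF(q)$ is the unique maximal proper subfield of $\bF$ while $\PG(n+1,\bF)\notin\cM$, the base of the exponential must be exactly $q$. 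Theorem~\ref{peters-theorem} then supplies $k,d\in\bZ_+$ with $0\le d\le\frac{q^{2k}-1}{q^2-1}$ and $h_{\cM}(r)\approx\frac{q^{r+k}-1}{q-1}-qd$, and tells us moreover that the growth rate is attained by rank-$k$ projections of projective geometries over $\GF(q)$. Everything now reduces to proving $k=n$ and $d=\frac{q^{2n}-1}{q^2-1}$; note that this value of $d$ sits at the very top of its allowed range.

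For the lower bound I would construct, for every large $r$, a rank-$r$ member of $\cM$ with exactly $\frac{q^{r+n}-1}{q-1}-q\frac{q^{2n}-1}{q^2-1}$ points. The construction is a carefully chosen rank-$n$ projection of $\PG(r+n-1,q)$: take a generator matrix of $\PG(r+n-1,q)$ over $\GF(q)$ with a distinguished set of $n$ coordinates to be contracted, replace those $n$ rows by rows over $\GF(q^2)$ chosen to be as generic as the finite field permits relative to the $\GF(q)$-subgeometry, and discard the points that the limited genericity forces to collapse together. The book-keeping should show that the number of surviving distinct points is $|\PG(r+n-1,q)|$ minus exactly $q$ times the number of points of $\PG(n-1,q^2)$ — this is where the constant $q\frac{q^{2n}-1}{q^2-1}$ gets pinned down. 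I would then verify that this matroid has no $\PG(n+1,\bF)$-minor, which I would reduce to the assertion that $\PG(n+1,\bF)$, of rank $n+2$ and with $\frac{q^{2(n+2)}-1}{q^2-1}$ points, is too far from being $\GF(q)$-representable to be recovered from a rank-$n$ projection of a $\GF(q)$-geometry; a point-count obstruction tracking how simplification interacts with contraction should give this. Together with $h_{\cM}(r)=\Omega(q^{r+n-1})$, the construction already forces $k\ge n$ and $d\le\frac{q^{2n}-1}{q^2-1}$.

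For the upper bound I would invoke the refined structure. Extremal members of a base-$q$ exponentially dense class are highly connected, and since their extremal rank-$r$ members are exponentially large they have a $\PG(m-1,\bFp)$-minor for every fixed $m$ once $r$ is large, so Theorem~\ref{refinement1} applies: each sufficiently large extremal matroid conforms to one of a finite list of subfield templates over $\bF$, and for the templates relevant to density $\Theta(q^r)$ the subfield is $\GF(q)$ (a proper subfield large enough to give that density), with each such template's class contained in $\cM$. The growth rate of the class of matroids conforming to a fixed subfield template is an explicit function of the bounded template data — essentially a $\GF(q)$-projective-geometry count inflated by the $\GF(q^2)/\GF(q)$-module data carried by $\Delta$ and $\Lambda$ and adjusted by $|C|$, $|D|$ and $|Y|$ — so $h_{\cM}(r)$ is eventually the maximum of finitely many such functions. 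The remaining task is to show that, over all subfield templates whose conforming class avoids a $\PG(n+1,\bF)$-minor, this maximum is exactly $\frac{q^{r+n}-1}{q-1}-q\frac{q^{2n}-1}{q^2-1}$: the operative constraint is that a template whose "subfield-free part" has rank exceeding $n$ reintroduces a $\PG(n+1,\bF)$-minor, and maximising the point count of a conforming matroid subject to that single constraint yields $k=n$ together with $d=\frac{q^{2n}-1}{q^2-1}$.

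The step I expect to be the main obstacle is the lower-bound construction and the verification that it avoids a $\PG(n+1,\bF)$-minor: making precise the "as generic as the finite field permits" choice of the $n$ contracted rows, and controlling exactly which points must be discarded so that the deficiency is precisely $q\frac{q^{2n}-1}{q^2-1}$ rather than merely $\Theta(q^n)$, is the delicate combinatorial heart of the argument, and it is presumably where the hypothesis $n\ge 3$ enters (for $n\le 2$ the excluded geometry is small enough that sporadic configurations, not of this projection type, could interfere with the extremal count). On the upper-bound side the passage to subfield templates via Theorem~\ref{refinement1} is conceptually clean, but extracting the exact second-order term $qd$ — rather than just the exponential order — will still require a careful optimisation over the finite family of admissible templates.
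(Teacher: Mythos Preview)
The paper does not prove this theorem: it is quoted, without proof, as a result of Nelson~\cite{nelson2}. So there is no ``paper's own proof'' to compare your proposal against.

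That said, a brief comment on your outline. Your reduction to Theorem~\ref{peters-theorem} is correct and identifies the right parameters $k=n$ and $d=\frac{q^{2n}-1}{q^2-1}$. For the upper bound, however, you lean on Theorem~\ref{refinement1}, which in this paper is announced as a consequence of forthcoming work; Nelson's argument in~\cite{nelson2} predates the template machinery and does not use it. His proof of the upper bound is a direct extremal argument: one shows that a sufficiently connected $\GF(q^2)$-representable matroid that is denser than the claimed bound must contain a $\PG(n+1,q^2)$-restriction, by analysing how a large $\GF(q)$-projective-geometry restriction can be extended inside the ambient $\GF(q^2)$-representation. Your lower-bound construction is in the right spirit, but the description ``as generic as the finite field permits'' hides the real content; the explicit construction and the exact count of collapsed points are carried out in~\cite{nelson2}, and the hypothesis $n\ge 3$ is used there rather than in the way you speculate.
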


Next consider the case that 
$N$ is representable over $\bFp$
but that $N$ is not graphic.
In this case $\cM$ will be quadratically dense.

Let $L_1$, $L_2$ and $L_3$ be three lines through
a point $e$ in the projective plane $\PG(2,\bF)$,
let $a$ and $b$ be distinct points $L_2-\{e\}$,
and let $M$ be the restriction of $\PG(2,\bF)$
to $L_1\cup L_3\cup\{a,b\}$.
The matroid $M$ is independent of the particular
choice of $(L_1,L_3,a,b)$; we denote $M$ by $\cR(\bF)$.
These matroids are called {\em Reid geometries}
and play a significant role in~\cite{kung}.
The Reid geometry $\cR(\bF)$ is not representable
over any field whose characteristic is different
from that of $\bF$.

The following conjecture arose from discussions with Joseph Kung;
it generalises Conjecture~\ref{part1}.
\begin{conjecture}
Let $\bF$ be a finite field of order $q$
and let $\cM$ be the class of $\bF$-representable
matroids with no $\cR(\bFp)$-minor.  Then
$$ h_{\cM}(r) \approx (q-1) {r\choose 2} + r.$$
\end{conjecture}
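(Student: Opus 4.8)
The plan is to prove matching bounds: the lower bound $h_{\cM}(r)\ge(q-1)\binom{r}{2}+r$ by exhibiting dense examples, and the reverse inequality for large $r$ using Theorem~\ref{refinement2}.

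For the lower bound, note first that $\cR(\bFp)$ is a restriction of $\PG(2,\bFp)$, so no member of $\cM$ has a $\PG(2,\bFp)$-minor, and that $\cR(\bFp)$ is non-graphic (for $p\ge 3$ it has a $U_{2,p+1}$-restriction, and $\cR(\GF(2))=F_7$), so $\cM$ contains all graphic matroids; hence $\cM$ is quadratically dense by Theorem~\ref{growth-rates}. Now $\cD(\bF,\bFmult)\subseteq\cM$: the class of frame matroids is minor-closed, while $\cR(\bFp)$ is not a frame matroid — for $p\ge 3$ because no rank-$3$ frame matroid has three concurrent lines two of which contain more than three points, and for $p=2$ because $F_7$ is not a frame matroid — so $\cR(\bFp)$ is not a minor of any member of $\cD(\bF,\bFmult)$. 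Since the rank-$r$ Dowling geometry over $\bFmult$ is a simple $\bF$-represented matroid with exactly $(q-1)\binom{r}{2}+r$ elements, the lower bound follows.

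For the upper bound I would first reduce to the highly connected case: low-order separations are costly in a quadratically dense class, so (modulo Conjecture~\ref{growth1}, which should be provable directly for $\cM$) it suffices to bound $|M|$ for a simple vertically $k$-connected $M\in\cM$ of rank $r$, with $k$ as large as the structure theorem requires. Since $M$ has no $\PG(2,\bFp)$-minor, Theorem~\ref{refinement2} applies with $m=3$: either $M\in\cM(\Phi_i)$ for one of finitely many frame templates $\Phi_i$ with $\cM(\Phi_i)\subseteq\cM$, or $M^*\in\cM(\Psi_j)$ for one of finitely many frame templates $\Psi_j$ whose dual classes lie in $\cM$. The co-frame alternative is dealt with immediately: a cosimple frame matroid has minimum degree at least three in its biased graph, so, exactly as cographic matroids are sparse, its dual has $O(r)$ elements in rank $r$, and the same bound holds up to bounded corrections for duals of frame-template matroids; thus $|M|=O(r)$, far below $(q-1)\binom{r}{2}+r$ for large $r$. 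Hence every extremal matroid of large rank conforms to some $\Phi_i$.

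It remains to bound $h_{\cM(\Phi)}(r)$ for a frame template $\Phi=(\Gamma,C,D,X,Y_0,Y_1,A_1,\Delta,\Lambda)$ with $\cM(\Phi)\subseteq\cM$. The dominant contribution to a matroid conforming to $\Phi$ is its $\Gamma$-frame block, giving at most $|\Gamma|\binom{r}{2}+O(r)$ elements with $|\Gamma|\le q-1$, while the columns and rows involving $C,Y_0,Y_1$ and the fixed finite spaces $\Lambda,\Delta$ contribute only finitely many distinct patterns, hence a priori at most a bounded multiple of $\binom{r}{2}+O(r)$ further elements. The crux — and the main obstacle — is to show that excluding $\cR(\bFp)$ collapses this extra structure: one must prove that if $\cM(\Phi)$ has no $\cR(\bFp)$-minor then $\Lambda$, $\Delta$ and the non-frame part of $A_1$ are trivial, since otherwise a nontrivial vector of $\Lambda$ or $\Delta$, or a nontrivial entry of $A_1$, together with two long frame lines, produces a minor with an $\cR(\bFp)$-restriction. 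Granting this, the members of $\cM(\Phi)$ are, up to bounded modifications, restrictions and projections of $\Gamma$-frame matroids with $|\Gamma|\le q-1$, and a careful count through the operations $\con C\del((B-X)\cup Y_1)$ and through the simplification used in the connectivity reduction gives $h_{\cM(\Phi)}(r)\le(q-1)\binom{r}{2}+r$ for large $r$. This extremal analysis of the $\cR(\bFp)$-free frame templates, which must exploit the precise geometry of Reid's configuration and parallels the case analysis behind Conjecture~\ref{part1}, is where the real work lies; together with the lower bound it yields $h_{\cM}(r)\approx(q-1)\binom{r}{2}+r$.
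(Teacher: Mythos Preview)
The statement you are attempting to prove is a \emph{conjecture} in the paper, not a theorem; the paper offers no proof and explicitly presents it as open, remarking only that it ``arose from discussions with Joseph Kung'' and generalises Conjecture~\ref{part1}. So there is no paper proof against which to compare your attempt.

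That said, your proposal is not a proof but an outline of a plausible program, and you are candid about this: you invoke Conjecture~\ref{growth1} for the connectivity reduction, and you flag the template analysis (showing that excluding $\cR(\bFp)$ forces $\Lambda$, $\Delta$, and the non-frame part of $A_1$ to be trivial) as ``the crux'' and ``where the real work lies'', without carrying it out. These are genuine gaps, not minor omissions. In particular, the assertion that a nontrivial element of $\Lambda$ or $\Delta$ together with two long frame lines produces an $\cR(\bFp)$-minor is exactly the delicate geometric step; it is not obvious, and getting the linear term right (the ``$+r$'' rather than merely $O(r)$) would require further control beyond the leading coefficient. Your lower-bound argument also deserves more care: you need $\cR(\bFp)$ not to be a \emph{minor} of any $\bF$-represented $\bFmult$-frame matroid, not merely not a frame matroid itself, so you must rule out contractions as well as restrictions. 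The overall shape of the program --- Dowling geometries for the lower bound, Theorem~\ref{refinement2} plus a template analysis for the upper bound --- is precisely what the paper intends its structure theorems to enable, but as written this remains a sketch of a strategy for an open problem rather than a proof.
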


\section{Beyond Finite Fields}

When we go from minor-closed classes of matroids
representable over finite fields to
arbitrary minor-closed classes
the nice properties quickly slip away; for example, the
set of all matroids with rank at most $3$ is not well-quasi-ordered.
However, it looks like minor-closed classes that do not
contain all uniform matroids remain ``highly structured";
this is discussed in the survey paper~\cite{uniform}.

Our next conjecture, if true, is an extension of our structure theorem, Theorem~\ref{structure1},
to minor-closed classes of matroids that omit a certain uniform matroid.
To state that conjecture we need general matroidal analogues
of perturbation and of represented frame matroids.

Let $M_1$ and $M_2$ be matroids with a common ground set, say $E$.
If there is a matroid $M$ on ground set $E\cup \{e\}$ such that
$M_1 = M\del e$ and $M_2=M\con e$, then we say that $M_2$ is an {\em elementary
projection} of $M_1$ and that $M_1$ is an {\em  elementary lift} of $M_2$. 
We let $\dist(M_1,M_2)$ denote the minimum number of elementary lifts and
elementary projections required in order to transform $M_1$ into $M_2$.

The matroid $M$ is a {\em frame matroid} if there exists a matroid $M'$
with a basis $B$ such that $M=M'\del B$,
and every element of $E(M)$ is spanned by at most two elements of $B$ in $M'$.
Zaslavsky \cite{zas} has shown that frame matroids can be canonically associated
with the so-called bias graphs; see \cite{zas} or Oxley \cite[Chapter~6.10]{oxley} for details.

\begin{conjecture}
\label{beyond1}
Let $U$ be a uniform matroid.
Then there exist $k,t,q\in \bZ_+$ such that,
if $M$ is a vertically $k$-connected matroid that has no $U$-minor,
then there exists a matroid $N$ with $\dist(M,N)\leq t$
such that either
\begin{itemize}
\item[(i)] $N$ is a frame matroid,
\item[(ii)] $N^*$ is a frame matroid, or 
\item[(iii)] $N$ is representable over a finite field of size at most $q$.
\end{itemize}
\end{conjecture}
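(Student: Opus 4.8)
The plan is to lift the finite-field structure theorem, Theorem~\ref{structure1}, to this setting, via a reduction followed by analogues of its ingredient theorems. First, the reduction: since $U_{a,b}$ is a minor of $U_{n,2n}$ whenever $n=\max(a,b-a)$, the class of matroids with no $U_{a,b}$-minor is contained in the class of matroids with no $U_{n,2n}$-minor, so it is enough to prove the statement for $U=U_{n,2n}$, with $n$ as large as convenient. There are two payoffs. The self-duality of $U_{n,2n}$ is what will produce the symmetry between outcomes (i) and (ii). And the points $(1,x,x^2,\dots,x^{n-1})$ for $x\in\GF(q)$, together with $(0,\dots,0,1)$, form a $U_{n,q+1}$-restriction of $\PG(n-1,q)$; hence excluding $U_{n,2n}$ already forbids every $\PG(n-1,q)$-minor with $q\ge 2n-1$. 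This is the mechanism that bounds the field size $q$ in outcome (iii): a matroid that avoids $U_{n,2n}$ yet stays far, in $\dist$, from every frame and coframe matroid should be forced to contain a large projective geometry, necessarily over a field of size less than $2n$, and it is over that small field that outcome (iii) gets witnessed.

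For the main argument I would rerun the architecture of Theorems~\ref{structure3}--\ref{structure5} with ``has no $U_{n,2n}$-minor'' in place of ``is $\bF$-representable''. Concretely, I would try to prove: (a) a Mader-type theorem, that some $k$ forces every vertically $k$-connected matroid with no $U_{n,2n}$-minor to have an $M(K_m)$- or $M(K_m)^*$-minor, for any prescribed $m$; (b) a frame theorem, that for a highly connected, $U_{n,2n}$-minor-free matroid $M$ with a huge clique minor but no large projective-geometry minor over a prime field there is a frame matroid $N$ with $\dist(M,N)\le t$; and (c) a coordinatisation theorem, that a highly connected, $U_{n,2n}$-minor-free matroid with a huge projective-geometry minor is itself representable over a field of size less than $2n$. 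Granting (c), the projective-geometry-rich case is handed straight back to Theorem~\ref{structure1}, so the new content is concentrated in (a), (b) and (c). For (a) and (b) I would follow the corresponding papers in the structure series, replacing the single field by the finite list of fields that the reduction leaves available, while watching the usual technical points: vertical versus Tutte connectivity, and the behaviour of the hypotheses under taking minors.

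The hard part is that the finite-field structure program rests on the \emph{algebra} of representations --- inequivalent representations over a fixed field, ``confinement'' to a subfield, stabilizers of large projective-geometry minors --- and none of these notions survives verbatim once there is no ambient field. The crux is therefore twofold. First, (c) must supply a genuine coordinatisation: a sufficiently connected $U_{n,2n}$-minor-free matroid with a huge $\PG(m,q_0)$-minor has to be recoverable as an honest $\GF(q_0)$-representation built from the frame of reference that the large geometry provides; this is the substitute for the confinement machinery behind Theorem~\ref{structure5} and \cite{ggw1}. Second, for outcomes (i) and (ii) one needs a quantitative structure theory for frame matroids in the sense of Zaslavsky~\cite{zas}, playing the role that the finite-field frame-template theory of Theorem~\ref{refinement2} plays in the representable setting; I expect this bias-graph analysis to absorb most of the effort.

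A final constraint shapes everything: well-quasi-ordering is unavailable here --- already the rank-$3$ matroids are not well-quasi-ordered --- so any step in the finite-field proof that leans on a well-quasi-ordering argument has to be redone by explicit, quantitative means. It therefore seems wise to aim first at a purely qualitative form of the conjecture: a bounded-width decomposition of an arbitrary $U$-minor-free matroid into pieces each within bounded $\dist$ of a frame matroid, of a coframe matroid, or of a matroid representable over a bounded field. The highly connected statement given here should then follow by the standard argument that high connectivity collapses such a decomposition to a single piece.
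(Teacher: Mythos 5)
The statement you are addressing is Conjecture~\ref{beyond1}, which the paper poses as an open problem, not a theorem: the authors remark immediately after stating it that it ``is likely to be difficult'' and that even the special case of representable matroids with no $U_{2,n}$- and no $U_{n-2,n}$-minor would be ``a significant step.'' There is therefore no proof in the paper to compare against, and your submission does not change that --- it is a research programme, not a proof.

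Your two preliminary observations are correct and useful. The reduction to $U=U_{n,2n}$ is sound: $U_{a,b}$ is a minor of $U_{n,2n}$ exactly when $n\ge\max(a,b-a)$, so excluding $U_{a,b}$ is at least as restrictive as excluding $U_{n,2n}$, and $U_{n,2n}$'s self-duality aligns with the (i)/(ii) symmetry. The rational normal curve (Vandermonde) argument showing $U_{n,q+1}\subseteq\PG(n-1,q)$, and hence that $U_{n,2n}$-free matroids avoid $\PG(n-1,q)$ for $q\ge 2n-1$, is the standard mechanism bounding the field size in outcome (iii). But everything past that point --- the Mader-type clique theorem (a), the frame-perturbation theorem (b), and especially the coordinatisation theorem (c), which is precisely the non-representable analogue of Theorem~\ref{structure5} and the substitute for the confinement machinery of \cite{ggw1} --- is stated as a target rather than proved, and you say as much when you call them ``the hard part'' and ``the crux.'' You also correctly flag that the loss of well-quasi-ordering outside finite fields removes tools the representable proof leans on, but offer no replacement beyond ``explicit, quantitative means.'' The proposal is a sensible orientation towards the problem and does not misstate anything, but it supplies no proof of the conjecture and should not be presented as one.
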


Conjecture~\ref{beyond1} is likely to be difficult; it would be a significant
step to prove the result for representable
matroids with no $U_{2,n}$-minor and no $U_{n-2,n}$-minor.

In order to prove Conjecture~\ref{beyond1},
it would be useful to have the following generalisation of Theorem~\ref{structure3}.
We assume that the reader is familiar with the so-called bicircular matroid
BM$(G)$ of a graph $G$; this is a particular type of frame matroid; see~\cite[Chapter 6.10]{oxley}.

\begin{conjecture}
\label{beyond2}
Let $n$ be a positive integer.
Then there exists $k\in\bZ_+$ such that
each vertically $k$-connected matroid has a
$U_{n,2n}$-, $M(K_n)$-,
$M(K_n)^*$-, $BM(K_n)$- or $BM(K_n)^*$-minor.
\end{conjecture}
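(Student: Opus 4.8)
The plan is to obtain the conjecture from a structure theorem in the same way that Theorem~\ref{structure3} is obtained from Theorem~\ref{structure1}. Given $n$, suppose for contradiction that some vertically $k$-connected matroid $M$, with $k$ to be chosen large in terms of $n$, has none of the five listed minors. In particular $M$ has no $U_{n,2n}$-minor, so Conjecture~\ref{beyond1} applies with $U=U_{n,2n}$: taking $k$ at least the constant it supplies, there are $t=t(n)$ and $q=q(n)$ and a matroid $N$ with $\dist(M,N)\le t$ such that $N$ is a frame matroid, $N^*$ is a frame matroid, or $N$ is representable over a field of size at most $q$. Crucially $t$ and $q$ depend only on $n$, so we remain free to enlarge $k$ without creating a circular parameter dependence; and $N$ inherits high vertical connectivity from $M$, since a bounded number of elementary lifts and projections alters vertical connectivity by a bounded amount.

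Next I would find an arbitrarily large clique-type minor in $N$. If $N$ is $\GF(q')$-representable with $q'\le q$, then Theorem~\ref{structure3} over $\GF(q')$ gives an $M(K_m)$- or $M(K_m)^*$-minor of $N$, once $N$ is vertically connected enough in terms of $m$ and $q$. If $N$ is a frame matroid, then $N$ is the frame matroid of a bias graph whose underlying graph is again highly connected; a Mader-type argument \cite{mader} produces a large clique minor of that graph, and a Ramsey argument over the gain group then forces either a large balanced clique, yielding an $M(K_m)$-minor of $N$, or a large ``unbalanced'' clique (the outcome when the gain group is infinite), yielding a $BM(K_m)$-minor of $N$. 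If $N^*$ is a frame matroid, apply the previous case to $N^*$ and dualize, obtaining an $M(K_m)^*$- or $BM(K_m)^*$-minor of $N$. In all cases $m$ can be made as large as we like by enlarging $k$.

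Finally I would transfer back: $M$ is obtained from $N$ by at most $t=t(n)$ elementary lifts and projections, and $N$ has a minor isomorphic to one of $M(K_m)$, $M(K_m)^*$, $BM(K_m)$, $BM(K_m)^*$ with $m$ huge relative to $n$ and $t$; then either $M$ has the $n$-vertex analogue of that minor, or the $t$ operations spread the dense clique minor out enough to create a $U_{n,2n}$-minor of $M$ --- in either case contradicting our choice of $M$. I expect the two genuinely new ingredients to be the main difficulties. The first is the Mader-type theorem for frame matroids --- that a sufficiently vertically connected frame matroid has an $M(K_n)$- or $BM(K_n)$-minor --- which strengthens Mader's theorem \cite{mader} and must handle the gain-group structure of bias graphs as well as the non-uniqueness of the bias graph presenting a given frame matroid. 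The second is the transfer lemma: controlling how a bounded number of elementary lifts and projections can destroy a dense clique minor, and showing that destruction on this scale forces a large uniform minor. One caveat: since Conjecture~\ref{beyond2} is really wanted as an ingredient for Conjecture~\ref{beyond1}, the genuinely useful form of this argument should replace the appeal to Conjecture~\ref{beyond1} by an appeal to the underlying structure theorem for matroids with no fixed uniform minor, of which that trichotomy is a consequence.
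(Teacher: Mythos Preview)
The paper offers no proof of this statement: it is posed as an open conjecture, explicitly described as a desirable \emph{ingredient} for attacking Conjecture~\ref{beyond1}, not as a consequence of anything in the paper. There is therefore nothing to compare your argument against.

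That said, your outline is not a proof but a programme, and you have correctly identified its own fatal circularity: you invoke Conjecture~\ref{beyond1} to deduce Conjecture~\ref{beyond2}, while the paper's intent is the reverse. Your closing caveat does not repair this, because there is at present no ``underlying structure theorem for matroids with no fixed uniform minor'' to appeal to --- that is precisely the content of Conjecture~\ref{beyond1}, and the paper offers no proof of it either. Beyond the circularity, both of the ``genuinely new ingredients'' you flag are themselves open. The Mader-type statement for frame matroids (high vertical connectivity forces an $M(K_n)$- or $BM(K_n)$-minor) is not established anywhere; your sketch via a Ramsey argument on gains presumes a clean dichotomy between ``balanced'' and ``contrabalanced'' clique minors that need not hold for arbitrary (possibly infinite, possibly nonabelian) gain groups, and ignores that distinct bias graphs can yield the same frame matroid. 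The transfer lemma --- that a bounded number of lifts and projections cannot destroy a large clique-type minor without creating a large uniform minor --- is likewise unproved and not obviously true in this generality. So even setting aside the circular dependence on Conjecture~\ref{beyond1}, what remains is a plausible strategy whose two load-bearing lemmas are each of independent difficulty comparable to the conjecture itself.
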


Again the special case where $M$ is representable
with no $U_{2,n}$-minor and no $U_{n-2,n}$-minor is of considerable interest.

\section*{Acknowledgements} We thank Navin Kashyap for helpful discussions on 
the problems in coding theory. We thank Sergey Norin for clarifying the status of
conjectures on extremal members and growth rates of minor-closed classes of graphs.


\begin{thebibliography}{77}


\bibitem{dec} L. Decreusefond and G. Z\'emor, On the error-correcting capabilities of
cycle codes of graphs, {\em Combinatorics, Probability and Computing} {\bf 6} (1997),
27--38.


\bibitem{epp} D. Eppstein, Densities of minor-closed graph families, 
{\em The Electronic Journal of Combinatorics}, {\bf 17} (2010) $\sharp$R136. 

\bibitem{uniform}
J. Geelen,
Some open problems on excluding a uniform matroid,
{\em Advances in Applied Mathematics}, {\bf 41} (2008) 628-637,

\bibitem{ggw1} J. Geelen, B. Gerards, and G. Whittle, Matroid structure. I. 
Confined to a subfield, in preparation.

\bibitem{gk1} J. Geelen and K. Kabell, Projective geometries in dense matroids,
{\em J. Combin.\ Theory Ser.\ B} {\bf 99} (2009), 1--8.

\bibitem{gkw1} J. Geelen, J. Kung and G. Whittle, Growth rates of minor-closed 
classes of matroids {\em J. Combin.\ Theory Ser.\ B}
{\bf 99} (2009), 420--427.

\bibitem{gw2} J. Geelen and G. Whittle, Cliques in dense $GF(q)$-representable matroids
{\em J. Combin.\ Theory Ser.\ B} {\bf 87} (2003), 264--269.

\bibitem{nel} J. Geelen and P. Nelson, Exponentially dense minor-closed classes of matroids, in preparation.

\bibitem{gro} M. Gr\"otschel and K. Truemper, Decomposition and optimization
over cycles in binary matroids, {\em J. Combin.\ Theory Ser.\ B}, {\bf 46} (1989), 306--337.

\bibitem{heller}
I. Heller,
On linear systems with integer valued solutions,
{\em Pacific J. Math.} {\bf 7} (1957), 1351--1364.

\bibitem{navin1} N. Kashyap, A decomposition theory for binary linear codes,
{\em IEEE Transactions on Information Theory}, {\bf 54} (2008), 3035--3058.

\bibitem{navin2} N. Kashyap, Regular codes are not asymptotically good. Preprint.

\bibitem{kung} J. P. S. Kung, Extremal matroid theory, in 
{\em Graph structure theory}, N. Robertson and
P. D. Seymour, eds., Amer.\ Math.\ Soc.,
Providence, Rhode Island, (1993), 21--62.

\bibitem{kung2}
J.P.S. Kung,
The long-line graph of a combinatorial geometry. II.
Geometries representable over two fields of different characteristics,
{\em J. Combin.\ Theory Ser.\ B}, {\bf 50} (1990) 41-53.

\bibitem{kung3}
J.P.S. Kung,
{Combinatorial geometries representable over GF$(3)$
and GF$(q)$. I. The number of points,}
{\em Discrete Comput. Geom.} {\bf 5} (1990) 83-95.

\bibitem{ko}
J.P.S. Kung, J.G. Oxley,
{Combinatorial geometries representable over GF$(3)$
and GF$(q)$. II. Dowling geometries,}
{\em Discrete Comput. Geom.} {\bf 4} (1988) 323--332.

\bibitem{vanlint}
J.H. van Lint,
{\em Introduction to Coding Theory},
Graduate Texts in Mathematics, {\bf 86}, Springer-Verlag, Berlin, 1982.

\bibitem{mader} W. Mader, Homomorphieeigenschaften und mittlere Kantendichte von Graphen,
{\em Math.\ Ann.} {\bf 174} (1967), 265--268.


\bibitem{nelson2}
P. Nelson,
Growth rate functions of dense classes of representable matroids,
{\em J. Combin.\ Theory Ser.\ B}, {\bf 103} (2013) 75-92.

\bibitem{oxley} J.G.  Oxley,  {\em Matroid Theory,} 
Oxford University Press, New York, 2011.

\bibitem{ovw}
J. Oxley, D. Vertigan, G. Whittle,
On maximum-sized near-regular and $\sqrt[6]{1}$-matroids,
{\em Graphs Combin.} {\bf 14} (1998), 163--179.

\bibitem{thomassen}
C. Thomassen,
Girth in graphs,
{\em J. Combin.\ Theory Ser.\ B}, {\bf 35} (1983) 129--141.

\bibitem{vardy} A. Vardy, The intractability of computing the minimum distance of a code,
{\em IEEE Transactions on Information Theory}, {\bf 43} (1997), 1757--1766. 

\bibitem{zas} T. Zaslavsky, Biased graphs. II. The three matroids,
{\em J. Combin.\ Theory Ser.\ B}, {\bf 51} (1991), 46--72. 

\end{thebibliography}
\end{document}